\newtheorem{thm}{Theorem}[section]
\newtheorem{cor}[thm]{Corollary}
\newtheorem{lem}[thm]{Lemma}
\newtheorem{prop}[thm]{Proposition}
\newtheorem{defn}[thm]{Definition}
\newtheorem{expl}[thm]{Example}
\def\R{{\mathbb R}}       % real numbers
\def\Do{{\mathsf D}}      %Down indicies
\def\Up{{\mathsf U}}      %Up indicies
\def\As{{\mathsf {As}}}     %associahedron
\def\Cy{{\mathsf {Cy}}}     %cyclohedron
\newcommand{\set}[2]{\left\{#1\vphantom{#2}\right.\;\left|\;\vphantom{#1}#2\right\}}
\date{August 26, 2013}
\title[Minkowski decomposition of associahedra \& related combinatorics]{Minkowski decomposition of associahedra\\ and related combinatorics}
\author{
Carsten Lange
}
\thanks{\noindent Partially supported by DFG Forschergruppe 565 \emph{Polyhedral Surfaces}\\
	Freie Universit\"at Berlin,
	FB Mathematik \& Informatik,
	Arnimallee 6,
	14195~Berlin,
	Germany and \\
    Universit\'e Paris VI,
	Institut de Math{\'e}matiques de Jussieu,
	4 place de Jussieu, 
	75005 Paris, 
	France\\  \texttt{clange@math.fu-berlin.de}
	}
\begin{document}
\maketitle
\begin{abstract}
Realisations of associahedra with linear non-isomorphic normal fans can be obtained by alteration of
the right-hand sides of the facet-defining inequalities from a classical permutahedron. These polytopes 
can be expressed as Minkowski sums and differences of dilated faces of a standard simplex as described by Ardila, Benedetti \& Doker (2010).
The coefficients~$y_I$ of such a Minkowski decomposition can be computed by M\"obius inversion if tight 
right-hand sides~$z_I$ are known not just for the facet-defining inequalities of the associahedron but also for all inequalities 
of the permutahedron that are redundant for the associahedron. 

\medskip
\noindent
We show for certain families of these associahedra:
\begin{compactitem}
	\item how to compute the tight value~$z_I$ for any inequality that is redundant for an associahedron but
	  facet-defining for the classical permutahedron. More precisely, each value~$z_I$ is described in terms of tight values~$z_J$ 
	  of facet-defining inequalities of the corresponding associahedron determined by combinatorial properties of~$I$.
	\item the computation of the values~$y_I$ of Ardila, Benedetti \& Doker can be significantly 
	  simplified and depends on at most four values $z_{a(I)}$, $z_{b(I)}$, $z_{c(I)}$ and $z_{d(I)}$.
	\item the four indices $a(I)$, $b(I)$, $c(I)$ and $d(I)$ are determined by the geometry of
	  the normal fan of the associahedron and are described combinatorially.
	\item a combinatorial interpretation of the values~$y_I$ using a labeled~$n$-gon. This result 
	  is inspired from similar interpretations for vertex coordinates originally described 
	  by \mbox{J.-L.}~Loday and well-known interpretations for the $z_I$-values of facet-defining inequalities.
\end{compactitem}
\end{abstract}

%%%%%%%%%%%%%%%%%%%%%%%%%%%%%%%%%%%%%%%%%%%%%%%%%%%%%%%%%%%%%%%%%%%%
\section{Introduction} \label{sec_gen_permu_and_assoc}
%%%%%%%%%%%%%%%%%%%%%%%%%%%%%%%%%%%%%%%%%%%%%%%%%%%%%%%%%%%%%%%%%%%%

A.~Postnikov defined in~\cite{Postnikov} generalised permutahedra as a subfamily of all convex
polytopes that have the following H-description:
\[
	P_n(\{ z_I\}) := \set{\pmb x \in \mathbb R^n}
						 {\begin{matrix}
							\sum_{i\in[n]}x_i = z_{[n]}\text{ and } 
							\sum_{i\in I}x_i \geq z_I \text{ for }\varnothing \subset I \subset [n]\\
						  \end{matrix}
						  }
\]
where~$[n]$ denotes the set $\{ 1,2,\cdots,n\}$. The classical $(n-1)$-dimensional permutahedron, 
as described for example by G.~M.~Ziegler,~\cite{ziegler}, corresponds to $z_I = \tfrac{|I|(|I|+1)}{2}$ 
for $\varnothing \subset I \subseteq [n]$ (we distinguish between $\subset$ and $\subseteq$!). Obviously, 
some of the above inequalities may be redundant for~$P_n(\{z_I\})$ and, unless the value~$z_I$ is tight, sufficiently 
small increases and decreases of~$z_I$ for a redundant inequality do not change the combinatorial type of~$P_n(\{z_I\})$. 
Although the encoding by all values~$z_I$ is not efficient, Proposition~\ref{prop:ardila} below gives a good reason to 
specify tight values~$z_I$ for all $I\subseteq [n]$. The subfamily of generalised permutahedra is now characterised by 
the additional requirement that~$P_n(\{ z_I\})$ is an element of the deformation cone of the classical permutahedron. 
Equivalently, this means that the normal fan of the generalised permutahedron is a coarsening of the normal fan of the
classical permutahedron or that no facet-defining hyperplane of the permutahedron is moved \emph{past any vertices}, 
compare A.~Postnikov, V.~Reiner, and L.~Williams,~\cite{PostnikovReinerWilliams}. This fine 
distinction and additional condition is easily overlooked but essential. For example, Proposition~\ref{prop:ardila} 
does not hold for arbitrary polytopes~$P_n(\{ z_I\})$, we illustrate this by a simple example in Section~\ref{sec_cyclohedron}. 
Fundamental examples are dilations of the standard simplex $\Delta_n=\operatorname{conv}\{ e_1, e_2, \cdots, e_n\}$
where $e_i$ denotes the $i^{th}$ standard basis vector of $\R^n$.

For any two polytopes~$P$ and~$Q$, the Minkowski sum $P+Q$ is defined as~$\{ p+q \ | \ p\in P,\ q\in Q \}$. In contrast, 
we define the Minkowski difference~$P-Q$ of~$P$ and~$Q$ only if there is a polytope~$R$ such that~$P=Q+R$. For more
details on Minkowski differences we refer to~\cite{Schneider}. We are interested in decompositions of generalised 
permutahedra into Minkowski sums and differences of dilated faces of the $(n-1)$-dimensional standard simplex~$\Delta_n$,
where the faces~$\Delta_I$ of~$\Delta_n$ are given by $\operatorname{conv}\{ e_i\}_{i\in I}$ for $I\subseteq [n]$. If a 
polytope~$P$ is the Minkowski sum and difference of dilated faces of~$\Delta_n$, we say that~$P$ has a Minkowski 
decomposition into faces of the standard simplex. The following two results are known key observations.

\begin{lem}[{\cite[Lemma~2.1]{ArdilaBenedettiDoker}}]
	$P_n(\{z_I\})+P_n(\{z^{\prime}_I\}) = P_n(\{z_I + z^{\prime}_I\})$.
\end{lem}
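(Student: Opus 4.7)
The plan is to establish the two set inclusions separately, with only one direction requiring real work.

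The inclusion $P_n(\{z_I\})+P_n(\{z'_I\}) \subseteq P_n(\{z_I + z'_I\})$ is essentially by inspection. Given $p \in P_n(\{z_I\})$ and $q \in P_n(\{z'_I\})$, I would simply sum the two defining inequalities index-wise: for every $\varnothing \subset I \subset [n]$,
\[
    \sum_{i\in I}(p_i+q_i) \;=\; \sum_{i\in I}p_i + \sum_{i\in I}q_i \;\geq\; z_I + z'_I,
\]
and the equality constraint for $I=[n]$ adds in the same way. So $p+q$ satisfies every inequality defining $P_n(\{z_I+z'_I\})$.

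The reverse inclusion is the substantive content. The natural tool is the support function: for any convex bodies $P$ and $Q$, one has $h_{P+Q}(u) = h_P(u) + h_Q(u)$, and a polytope is determined by the values of its support function on the outer normals to its facets. Because $P_n(\{z_I\})$ lies in the deformation cone of the classical permutahedron, its normal fan is a coarsening of the braid fan, so all of its facet outer normals are of the form $-\mathbf{1}_I$ for some $I \subseteq [n]$ (together with $\pm\mathbf{1}_{[n]}$ for the affine hull). Consequently $h_{P_n(\{z_I\})}(-\mathbf{1}_I)$ equals the tight value of $z_I$ (up to sign), and similarly for $P_n(\{z'_I\})$. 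The sum $P_n(\{z_I\})+P_n(\{z'_I\})$ is itself a generalised permutahedron (a Minkowski sum of deformations of a polytope is again a deformation), so it is likewise cut out by inequalities with normals $-\mathbf{1}_I$, and its support function values in those directions are exactly the sums of the two summand support functions. This forces the defining right-hand sides to be $z_I+z'_I$.

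The step I would expect to require care is the identification of the support function of each $P_n(\{z_I\})$ with the right-hand side $z_I$ on directions $-\mathbf{1}_I$. If the $z_I$ for redundant inequalities are not already tight, one must either first replace them by their tight counterparts, or argue directly at the level of inequalities: a point $x$ satisfying $\sum_{i\in I}x_i \geq z_I+z'_I$ for every $I$ can be written as $p+q$ with $p,q$ in the respective summands, because both summands are generalised permutahedra and their Minkowski sum saturates every such inequality at which the value $z_I+z'_I$ is tight. This is where the deformation-cone hypothesis, highlighted in the introduction as an essential subtlety, is used in an indispensable way.
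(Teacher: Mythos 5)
The paper does not prove this lemma; it is quoted directly from Ardila, Benedetti \& Doker, so there is no in-paper argument to compare against. Your support-function argument is essentially the standard proof of this fact, and the core of it is sound: the easy inclusion is correct as written, and for the hard inclusion you correctly use that support functions add under Minkowski sum, that the normal fan of $P+Q$ is the common refinement of $\mathcal{N}(P)$ and $\mathcal{N}(Q)$ (hence again coarsens the braid fan when both summands do, so $P_n(\{z_I\})+P_n(\{z'_I\})$ is itself a generalised permutahedron), and that such a polytope is pinned down by its support function values on the rays $-\mathbf{1}_I$ of the braid fan.

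The one place where the writing is not tight is the final paragraph. The first option you offer (replace the $z_I$ by their tight counterparts, equivalently read the $z_I$ as support function values, which is automatic once one requires the supermodularity that characterises the deformation cone) is the correct resolution. The second option, ``argue directly at the level of inequalities,'' is not a proof, and in fact the equality genuinely fails for non-tight data: if $z_I < \hat z_I$ for a redundant inequality of $P$ while $-\mathbf{1}_I$ is a facet normal of $P+Q$, then $P_n(\{z_I+z'_I\})$ is strictly larger than $P+Q$. A concrete instance is $P$ a standard simplex with a redundant $z_{\{i,j\}}$ pushed below $0$, and $Q$ a classical permutahedron, where $-\mathbf{1}_{\{i,j\}}$ is a facet normal of the sum. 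So the lemma implicitly assumes tight (equivalently supermodular) $z_I$, as you suspect; under that reading your argument is correct.
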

\begin{figure}
      \begin{center}
      \begin{minipage}{0.95\linewidth}
         \begin{center}
         \begin{overpic}
            [width=0.7\linewidth]{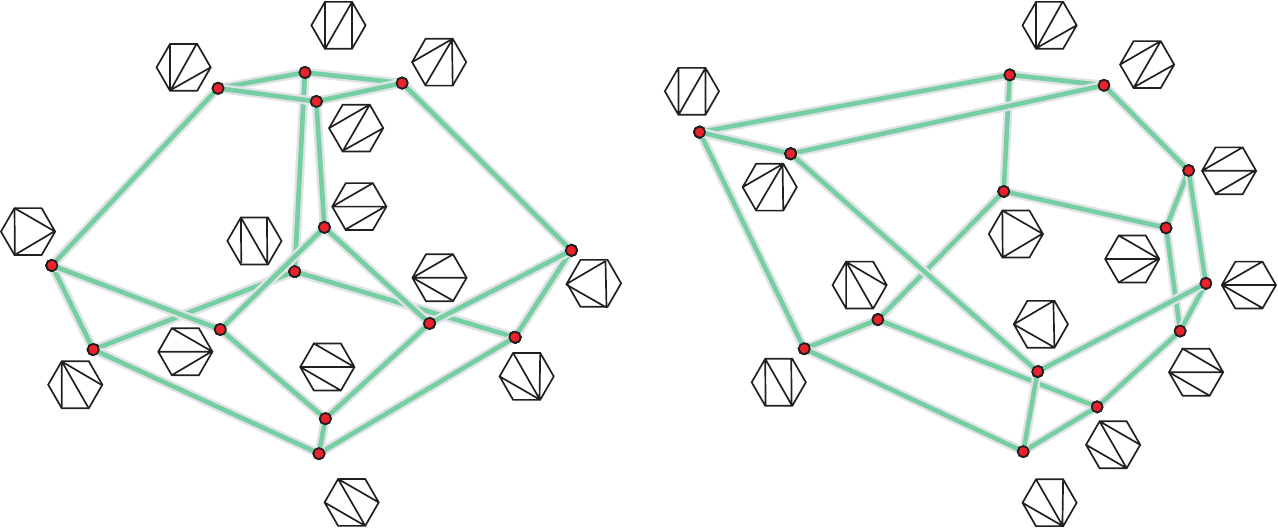}
            \put(67,4){\tiny $0$}
            \put(71,-4){\tiny $1$}
            \put(70,10){\tiny $2$}
            \put(81,-4){\tiny $3$}
            \put(85,4){\tiny $4$}
            \put(82,10){\tiny $5$}
            \put(221,4){\tiny $0$}
            \put(225,-4){\tiny $1$}
            \put(224,10){\tiny $2$}
            \put(236,10){\tiny $3$}
            \put(235,-4){\tiny $4$}
            \put(239,4){\tiny $5$}
         \end{overpic}
         \end{center}
         \caption[]{Two $3$-dimensional associahedra~$\As^c_3=P_4(\{\tilde z_I^c\})$ with vertex coordinates computed for
 					differently choosen Coxeter elements according to~\cite{HohlwegLange}. The different Coxeter elements 
					are encoded by different labelings of hexagons as indicated. The images shown are isometric copies
					of $3$-polytopes contained in the affine hyperplane $x_1+x_2+x_3+x_4=10$ of $\R^4$.}
         \label{fig:a3_associahedra}
      \end{minipage}
      \end{center}
\end{figure}
If we consider the function $I\longmapsto z_I$ that assigns every subset of~$[n]$ the corresponding tight
value~$z_I$ of~$P_n(\{z_I\})$, then the M\"obius inverse of this function assigns to~$I$ the coefficient~$y_I$
of a Minkowski decomposition of~$P_n(\{z_I\})$ into faces of the standard simplex:
\begin{prop}[{\cite[Proposition~2.3]{ArdilaBenedettiDoker}}]	
	\label{prop:ardila}
	$ $\\ \noindent
	Every generalised permutahedron $P_n(\{z_I\})$ can be written uniquely as a Minkowski 
	sum and difference of faces of~$\Delta_n$:
	\[
		P_n(\{z_I\}) = \sum_{I\subseteq [n]}y_I\Delta_I
	\]
	where $y_I = \sum_{J\subseteq I}(-1)^{|I\setminus J|}z_J$ for each $I\subseteq [n]$.	
\end{prop}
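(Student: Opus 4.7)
The plan is to realise each face $\Delta_I$ of the standard simplex as a generalised permutahedron, translate the asserted Minkowski decomposition into a linear system in the tight values via the preceding lemma, and invert that system by Möbius inversion on the Boolean lattice $2^{[n]}$.

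First I would compute the tight values $z^{\Delta_I}_J$ for the face $\Delta_I$ itself. Writing an arbitrary point as $\sum_{i\in I}\lambda_i e_i$ with $\lambda_i \geq 0$ and $\sum_{i\in I}\lambda_i = 1$, the functional $\sum_{j\in J}x_j$ evaluates to $\sum_{i\in I\cap J}\lambda_i$. Minimising gives $1$ when $I\subseteq J$ (the functional is identically $1$ on $\Delta_I$) and $0$ otherwise (place all weight on some $i\in I\setminus J$). Hence $z^{\Delta_I}_J = 1$ if $I\subseteq J$ and $0$ otherwise, so each $\Delta_I$ is itself a generalised permutahedron.

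Next, granted a signed Minkowski decomposition $P_n(\{z_I\}) = \sum_{I\subseteq[n]} y_I \Delta_I$, I would iterate the preceding lemma: it asserts that sums translate to addition of $z$-values, and as a direct consequence $P = Q + R$ forces $z^R_J = z^P_J - z^Q_J$, so Minkowski differences translate to subtraction. Combining with the previous step yields, for every $J\subseteq[n]$, the identity
\[
    z_J \;=\; \sum_{I\subseteq[n]} y_I\, z^{\Delta_I}_J \;=\; \sum_{I\subseteq J} y_I.
\]
The Möbius function of the Boolean lattice is $\mu(I,J) = (-1)^{|J\setminus I|}$, so Möbius inversion both forces uniqueness of the coefficients and produces the claimed closed form $y_I = \sum_{J\subseteq I}(-1)^{|I\setminus J|}z_J$.

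The delicate point, and the main obstacle, is \emph{existence}: the formula always defines some real numbers $y_I$, but a priori these only give a formal combination in the group of virtual polytopes. To promote it to a genuine polytope identity one must verify that $\sum_{y_I>0} y_I\Delta_I$ and $P_n(\{z_I\}) + \sum_{y_I<0}(-y_I)\Delta_I$ coincide as polytopes, equivalently that the asserted Minkowski difference exists; since both sides are honest Minkowski sums of generalised permutahedra, this reduces by the lemma to matching tight values, which holds by construction. This is precisely the step where the deformation-cone hypothesis flagged in the introduction is indispensable, since without it the right-hand side need not lie in the permutahedral deformation cone and the Minkowski difference need not be a polytope at all.
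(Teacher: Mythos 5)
The paper does not prove this statement: it is quoted verbatim from Ardila, Benedetti and Doker (their Proposition~2.3) and used as a black box, so there is no in-paper proof to compare against. Judged on its own, your argument is correct and is the standard Möbius-inversion proof: compute the tight values $z^{\Delta_I}_J = [\,I\subseteq J\,]$ for each simplex face, use additivity of tight right-hand sides under Minkowski sum (the paper's Lemma~2.1, quoted just above the proposition) to turn the hypothetical signed decomposition into the zeta-transform identity $z_J = \sum_{I\subseteq J} y_I$, and invert on the Boolean lattice to obtain both the formula and uniqueness. You also correctly isolate where the deformation-cone hypothesis is consumed: it guarantees that $P_n(\{z_I\})$ (and hence the polytope $P_n(\{z_I\}) + \sum_{y_I<0}(-y_I)\Delta_I$) is a generalised permutahedron, so that agreement of tight $z$-values forces equality of polytopes; without it the cyclohedron example in Section~5 shows the conclusion genuinely fails. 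One cosmetic remark: your final sentence slightly misstates the obstruction — the left-hand combination $\sum_{y_I>0} y_I\Delta_I$ always lies in the deformation cone; what can fail outside the cone is that $P_n(\{z_I\})$ itself is not a generalised permutahedron, so the "same tight values $\Rightarrow$ same polytope" step no longer applies to the other side of the desired identity.
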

\noindent
In particular, we also have $z_I = \sum_{J\subseteq I} y_J$. A basic example is the classical permutahdron: it is
known to be a zonotope and it is the Minkowski sum of the edges and vertices of~$\Delta_n$. The reader is invited to 
check that the corresponding $z_I$-values obtained by this formula yield precisely the right-hand sides mentioned earlier.

\medskip
We will study Minkowski decompositions of generalised permutahedra that have the same normal fan as~$\As^{c}_{n-1}$. Two 
$3$-dimensional examples of~$\As^c_3$ (with distinct normal fans) are shown in Figure~\ref{fig:a3_associahedra}, we describe 
their construction in detail in Section~\ref{sec_assoc_def}. The normal fans of these polytopes are determined by a Coxeter 
element~$c$ of the symmetric group, but we will avoid the explicit use of Coxeter elements and use a partition $\Do_c\sqcup\Up_c$
of~$[n]$ induced by~$c$ instead. The main result is that the relation between $z_I$- and $y_I$-coordinates of 
Proposition~\ref{prop:ardila} simplified significantly: each $y_I$ can be computed from at most four values~$z_J$ which depend 
on~$I$ and the normal fan of the polytope (or, equivalently, the Coxeter element~$c$ or the corresponding partition of~$[n]$). Moreover, we give
an explicit combinatorial description how to determine these terms~$z_J$. If the we further restrict to the realisations~$\As^{c}_{n-1}$
as described by C.~Hohlweg and C.~Lange in~\cite{HohlwegLange}, we show that the coefficients~$y_I$ can be described
as signed product of path-lengths of a labeled polygon.

We now give examples of Minkowski decompositions of realisations of $2$-dimensional associahedra~$\As^{c_1}_2$ 
and $\As^{c_2}_2$ which are contained in the affine hyperplane $x_1+x_2+x_3=6$ of $\R^3$. We immediately see that
the Minkowski decompositions are distinct since the set of coefficients~$y_I$ differ. These associahedra are 
pentagons that are obtained from the classical permutahedron by making the inequality $x_1+x_3\geq 3$ (respectively 
$x_2\geq 1$) redundant. They are described by the following complete set of tight $z_I$-values 
$z^{c_1}_I$ and $z^{c_2}_I$: \\[2mm]
\centerline{
   \begin{tabular}{l|c|c|c|c|c|c|c}
  	  $I$   		& $\{1\}$ & $\{2\}$ & $\{3\}$ & $\{1,2\}$ & $\{1,3\}$ & $\{2,3\}$ & $\{1,2,3\}$ \\ \hline
	  $z^{c_1}_I$	& $1$ 	  & $1$ 	& $1$ 	  & $3$ 	  & $2$   	  & $3$ 	  & $6$ 		\\ \hline
	  $z^{c_2}_I$	& $1$ 	  & $0$ 	& $1$ 	  & $3$ 	  & $3$   	  & $3$ 	  & $6$
   \end{tabular}
}\\[2mm]
Using Proposition~\ref{prop:ardila}, the reader may verify that
\[
	\As_2^{c_1} = 1\cdot \Delta_{\{1\}} + 1\cdot \Delta_{\{2\}} + 1\cdot \Delta_{\{3\}} 
				+ 1\cdot \Delta_{\{1,2\}} + 0\cdot \Delta_{\{1,3\}} + 1\cdot \Delta_{\{2,3\}} 
				+ 1\cdot \Delta_{\{1,2,3\}}
\]
and
\[
	\As_2^{c_2} = 1\cdot \Delta_{\{1\}} + 0\cdot \Delta_{\{2\}} + 1\cdot \Delta_{\{3\}} 
				+ 2\cdot \Delta_{\{1,2\}} + 1\cdot \Delta_{\{1,3\}} + 2\cdot \Delta_{\{2,3\}} 
				+ (-1)\cdot \Delta_{\{1,2,3\}},
\]
illustrations of these decompositions are given in Figures~\ref{fig:minkowski_decomp_As_c_1} and~\ref{fig:minkowski_decomp_As_c_2}. 

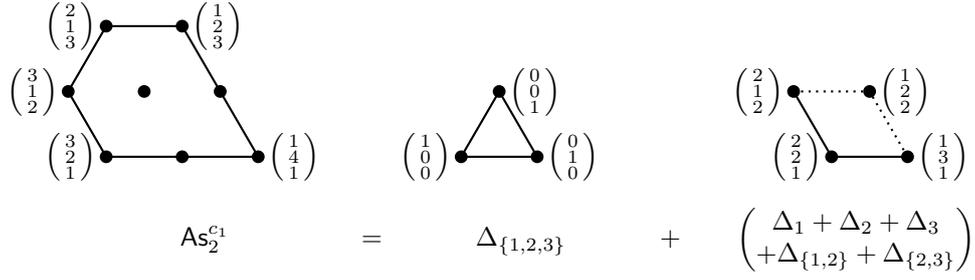
\begin{figure}
      \begin{center}
		 \begin{minipage}{0.95\linewidth}
         \begin{center}
			\begin{tikzpicture}[thick]
				\draw (0:0cm) -- (0:2cm);
				\draw (0:0cm) -- (120:1cm);
				\draw (120:1cm) -- (0cm,1.732cm) -- (1,1.732cm) -- (0:2cm);
				\filldraw [black] (0:0cm) node[anchor=east] {$\left( \begin{smallmatrix} 3\\ 2\\ 1 \end{smallmatrix}\right)$} circle (2pt)
								  (0:1cm) circle (2pt)
								  (120:1cm) node[anchor=east] {$\left( \begin{smallmatrix} 3\\ 1\\ 2 \end{smallmatrix}\right)$} circle (2pt)
								  (2cm,0cm) node[anchor=west] {$\left( \begin{smallmatrix} 1\\ 4\\ 1 \end{smallmatrix}\right)$} circle (2pt)
								  (60:1cm) circle (2pt)
								  (60:2cm) node[anchor=west] {$\left( \begin{smallmatrix} 1\\ 2\\ 3 \end{smallmatrix}\right)$} circle (2pt)
								  (0cm,1.732cm) node[anchor=east] {$\left( \begin{smallmatrix} 2\\ 1\\ 3 \end{smallmatrix}\right)$} circle (2pt)
								  (1.5cm,0.5*1.732cm) circle (2pt);
			\end{tikzpicture}
			\qquad
			\begin{tikzpicture}[thick]
				\draw (0:0cm) -- (60:1cm) -- (0:1cm) -- (0:0cm);
				\filldraw [black] (0:0cm) node[anchor=east] {$\left( \begin{smallmatrix} 1\\ 0\\ 0 \end{smallmatrix}\right)$} circle (2pt)
								  (0:1cm) node[anchor=west] {$\left( \begin{smallmatrix} 0\\ 1\\ 0 \end{smallmatrix}\right)$} circle (2pt)
								  (60:1cm) node[anchor=west] {$\left( \begin{smallmatrix} 0\\ 0\\ 1 \end{smallmatrix}\right)$} circle (2pt);
			\end{tikzpicture}
			\qquad\qquad
			\begin{tikzpicture}[thick]
				\draw (0:0cm) -- (0:1cm);
				\draw (0:0cm) -- (120:1cm);
				\draw [dotted](-0.5cm,0.5*1.732cm) -- (0.5cm,0.5*1.732cm) -- (0:1cm);
				\filldraw [black] (0:0cm) node[anchor=east] {$\left( \begin{smallmatrix} 2\\ 2\\ 1 \end{smallmatrix}\right)$} circle (2pt)
							  	  (0:1cm) node[anchor=west] {$\left( \begin{smallmatrix} 1\\ 3\\ 1 \end{smallmatrix}\right)$} circle (2pt)
								  (120:1cm) node[anchor=east] {$\left( \begin{smallmatrix} 2\\ 1\\ 2 \end{smallmatrix}\right)$} circle (2pt)
								  (60:1cm) node[anchor=west] {$\left( \begin{smallmatrix} 1\\ 2\\ 2 \end{smallmatrix}\right)$} circle (2pt);
		    \end{tikzpicture}
		    $ $\\[2mm]
		    \rule{2.25cm}{0cm}$\As_2^{c_1}$ \rule{1.5cm}{0cm} $=$ \rule{1cm}{0cm} $\Delta_{\{1,2,3\}}$ \rule{1.0cm}{0cm} 
			     $+$ \rule{0.5cm}{0cm} $\left(\begin{matrix} \Delta_1+\Delta_2+\Delta_3\\ +\Delta_{\{1,2\}}+\Delta_{\{2,3\}}\end{matrix}\right)$
	     \end{center}
	     \caption[]{The Minkowski decomposition of the $2$-dimensional assiciahedron~$\As_2^{c_1}$ into faces of the standard simplex is actually a Minkowski sum of some faces of a standard simplex.}
         \label{fig:minkowski_decomp_As_c_1}
         \end{minipage}
      \end{center}
\end{figure}
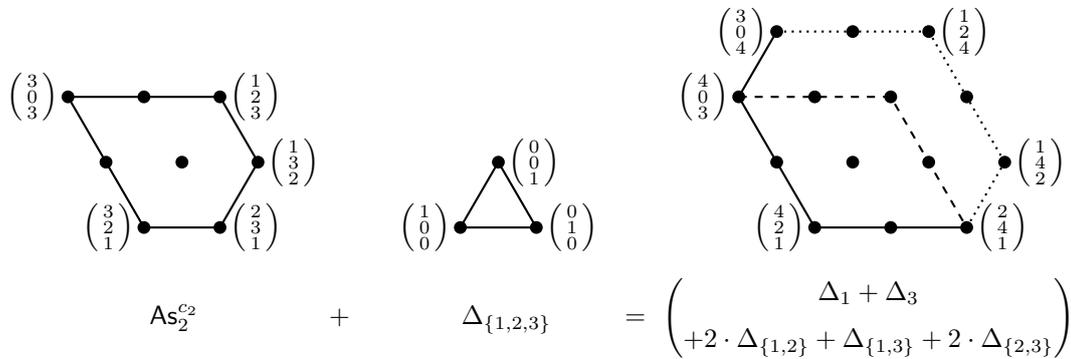
\begin{figure}[b]
      \begin{center}
		 \begin{minipage}{0.95\linewidth}
         \begin{center}
			\begin{tikzpicture}[thick]
				\draw (0:0cm) -- (0:1cm);
				\draw (0:0cm) -- (120:2cm);
				\draw  (120:2cm) -- (60:2cm) -- (1.5cm,0.5*1.732cm) -- (1,0);
				\filldraw [black] (0:0cm) node[anchor=east] {$\left( \begin{smallmatrix} 3\\ 2\\ 1 \end{smallmatrix}\right)$} circle (2pt)
								  (0:1cm) node[anchor=west] {$\left( \begin{smallmatrix} 2\\ 3\\ 1 \end{smallmatrix}\right)$} circle (2pt)
								  (120:1cm) circle (2pt)
								  (120:2cm) node[anchor=east] {$\left( \begin{smallmatrix} 3\\ 0\\ 3 \end{smallmatrix}\right)$} circle (2pt)
								  (60:1cm) circle (2pt)
								  (60:2cm) node[anchor=west] {$\left( \begin{smallmatrix} 1\\ 2\\ 3 \end{smallmatrix}\right)$} circle (2pt)
								  (0cm,1.732cm) circle (2pt)
								  (1.5cm,0.5*1.732cm) node[anchor=west] {$\left( \begin{smallmatrix} 1\\ 3\\ 2 \end{smallmatrix}\right)$} circle (2pt);
			\end{tikzpicture}
			\qquad
			\begin{tikzpicture}[thick]
				\draw (0:0cm) -- (60:1cm) -- (0:1cm) -- (0:0cm);
				\filldraw [black] (0:0cm) node[anchor=east] {$\left( \begin{smallmatrix} 1\\ 0\\ 0 \end{smallmatrix}\right)$} circle (2pt)
								  (0:1cm) node[anchor=west] {$\left( \begin{smallmatrix} 0\\ 1\\ 0 \end{smallmatrix}\right)$} circle (2pt)
								  (60:1cm) node[anchor=west] {$\left( \begin{smallmatrix} 0\\ 0\\ 1 \end{smallmatrix}\right)$} circle (2pt);
			\end{tikzpicture}
			\qquad
			\begin{tikzpicture}[thick]
				\draw (0:0cm) -- (0:2cm);
				\draw (0:0cm) -- (120:2cm);
				\draw (120:2cm) -- (-0.5cm,1.5*1.732cm);
				\draw [dotted](-0.5cm,1.5*1.732cm) -- (1.5cm,1.5*1.732cm);
				\draw [dotted] (2cm,0cm) -- (2.5cm,0.5*1.732cm);
				\draw [dotted] (1.5cm,1.5*1.732cm) -- (2.5cm,0.5*1.732cm);
				\draw [dashed] (120:2cm) -- (60:2cm) -- (0:2cm);
				\filldraw [black] (0:0cm) node[anchor=east] {$\left( \begin{smallmatrix} 4\\ 2\\ 1 \end{smallmatrix}\right)$} circle (2pt)
								  (0:1cm) circle (2pt)
							  	  (0:2cm) node[anchor=west] {$\left( \begin{smallmatrix} 2\\ 4\\ 1 \end{smallmatrix}\right)$} circle (2pt)
								  (120:1cm) circle (2pt)
								  (120:2cm) node[anchor=east] {$\left( \begin{smallmatrix} 4\\ 0\\ 3 \end{smallmatrix}\right)$} circle (2pt)
								  (-0.5cm,1.5*1.732cm) node[anchor=east] {$\left( \begin{smallmatrix} 3\\ 0\\ 4 \end{smallmatrix}\right)$} circle (2pt)
								  (60:1cm) circle (2pt)
								  (60:2cm) circle (2pt)
								  (60:3cm) node[anchor=west] {$\left( \begin{smallmatrix} 1\\ 2\\ 4 \end{smallmatrix}\right)$}circle (2pt)
								  (0cm,1.732cm) circle (2pt)
								  (0.5cm,1.5*1.732cm) circle (2pt)
								  (2cm,1.732cm) circle (2pt)
								  (1.5cm,0.5*1.732cm) circle (2pt)
								  (2.5cm,0.5*1.732cm) node[anchor=west] {$\left( \begin{smallmatrix} 1\\ 4\\ 2 \end{smallmatrix}\right)$} circle (2pt);
		 \end{tikzpicture}
	 	 \\[2mm]
		 \rule{2.0cm}{0cm}$\As_2^{c_2}$ \rule{1.5cm}{0cm} $+$ \rule{1.25cm}{0cm} $\Delta_{\{1,2,3\}}$ \rule{0.75cm}{0cm} 
			$=$ \rule{0.0cm}{0cm} $\left(\begin{matrix}
			 						\Delta_1+\Delta_3\\[2mm] + 2\cdot\Delta_{\{1,2\}} + \Delta_{\{1,3\}} + 2\cdot\Delta_{\{2,3\}}
								  \end{matrix}\right)$\\
		$ $\\
	  \end{center}
	  \caption[]{The Minkowski decomposition of~$\As_2^{c_2}$ into dilated faces of~$\Delta_{[n]}$.}
      \label{fig:minkowski_decomp_As_c_2}
      \end{minipage}
      \end{center}
\end{figure}

We could stop here and be fascinated how the M\"obius inversion relates the description by half spaces to the Minkowski
decomposition. Nevertheless, for associahedra with the same normal fan as $\As^c_{n-1}$, we go beyond the alternating 
sum description for~$y_I$ of Ardila, Benedetti \& Doker. In Theorem~\ref{thm:first_y_I_thm}, we significantly simplify 
the alternating sum for each~$y_I$. In fact, each $y_I$ can be expressed as an alternating sum of at most four non-zero 
values~$z_{a(I)}$, $z_{b(I)}$, $z_{c(I)}$ and $z_{d(I)}$ which are tight right-hand sides for certain facet-defining 
inequalities as specified in the theorem. In other words, we extract combinatorial core data for the M\"obius inversion 
of the function~$z_I$ and answer the question which subsets~$J$ of~$I$ are essential to compute~$y_I$ if the associahedron's 
normal fan is the normal fan of~$\As_{n-1}^c$. These sets~$J$ for
fixed~$I$ depend on the choice~$c$ that determines the normal fan. Figure~\ref{fig:example_compute_y_I} illustrates how 
Theorem~\ref{thm:first_y_I_thm} can be used to compute the coefficients $y_I$ for one of the two examples shown in 
Figure~\ref{fig:a3_associahedra}. If the associahedron coincides with some~$\As^c_{n-1}$, Theorem~\ref{thm:second_y_I_thm} 
states a purely combinatorial interpretation of the values~$y_I$. To illustrate this theorem, we 
recompute~$y_I$ for $\As^{c_1}_2$ and $\As^{c_2}_2$ in Examples~\ref{expl:As^c1_2} and~\ref{expl:As^c2_2}. 

\medskip
The outline of the paper is as follows. Section~\ref{sec_assoc_def} summarises necessary known facts about~$\As_{n-1}^c$ 
and indicates some occurrences of the realisations considered here in the mathematical literature. In 
Section~\ref{sec_tight_values_for_z_I} we introduce the notion of an up and down interval decomposition for 
subsets~$I\subseteq [n]$. This decomposition depends on the choice of a Coxeter element~$c$ (or equivalently on a partition
of~$[n]$ induced by~$c$) and is essential to prove Proposition~\ref{prop_tight_values_for_z_I}. This proposition
gives a combinatorial characterisation of all tight values~$z_I$ for~$\As_{n-1}^c$ needed to evaluate $y_I$ using 
Proposition~\ref{prop:ardila}. The main results, Theorem~\ref{thm:first_y_I_thm} and Theorem~\ref{thm:second_y_I_thm}, 
are then stated in Section~\ref{sec:main_results_and_exapmples}. The proof of Theorem~\ref{thm:first_y_I_thm} is long
and convoluted and deferred to Sections~\ref{sec:proof_main_thm} and \ref{sec_characterisation_of_possible_D_I}, 
while Theorem~\ref{thm:second_y_I_thm} is proved under the assumption of Theorem~\ref{thm:first_y_I_thm} in
Section~\ref{sec:main_results_and_exapmples}. To show that Proposition~\ref{prop:ardila} and Theorem~\ref{thm:first_y_I_thm} 
do not hold for polytopes~$P_n(\{z_I\})$ that are not contained in the deformation cone of the classical permutahedron, 
we briefly study a realisation of a $2$-dimensional cyclohedron in Section~\ref{sec_cyclohedron}. 

\medskip
About the same time as some of these results were achieved, V.~Pilaud and F.~Santos showed that the associahedra~$\As_{n-1}^c$
are examples of brick polytopes,~\cite{PilaudSantos1,PilaudSantos2}. One of their results is that any brick polytope can be 
expressed as a Minkowski sum of other brick polytopes. As a consequence, we have two Minkowski decompositions of~$\As_{n-1}^c$
that are extremal in the following 
sense. The first decomposition of~$\As_{n-1}^c$ has a relatively complicated structure with respect to the coefficients~$y_I$
(possibly negative numbers) but is very simple with respect to the polytopes used (faces of a standard simplex). On the other 
hand, the second decomposition of~$\As_{n-1}^c$ has a simple structure in terms its coefficients (they are either~$0$ or~$1$)
but is more complicated with respect to the polytopes used (brick polytopes). At the time of writing, the exact relationship 
of these two decompositions is not properly understood and remains a joint project of V.~Pilaud with the author.
 
\section{Associahedra as generalised permutahedra}\label{sec_assoc_def}
Associahedra form a family of combinatorially equivalent polytopes and can be realised as generalised permutahedra.
Since the combinatorics of a polytope is encoded in its face lattice, we define an associahedron as a polytope with a face
lattice that is isomorphic to the lattice of sets of non-crossing proper diagonals of a convex and plane $(n+2)$-gon~$Q$ 
ordered by reversed inclusion\footnote{A proper diagonal is a line segment connecting a pair of vertices of~$Q$ whose 
relative interior is contained in the interior of~$Q$. A non-proper diagonal is a diagonal that connects vertices adjacent 
in~$\partial Q$ and a degenerate diagonal is a diagonal where the end-points are equal.}. This description immediately tells 
us that the set of $k$-faces is in bijection to the set of triangulations of~$Q$ with $k$ proper diagonals removed. In 
particular, vertices correspond to triangulations and facets correspond to proper diagonals. Since associahedra turn out 
to be simple polytopes, a result of R.~Blind and P.~Mani-Levitska with an elegant proof due to G.~Kalai,~\cite{BlindMani, Kalai}, 
guarantees that the face lattice is already determined by the $1$-skeleton, so it suffices to specify the vertex-edge 
graph to determine the combinatorics of the face-lattice. This graph is known as the flip graph of 
triangulations of~$Q$. 
In 2004, J.-L.~Loday published a beautiful combinatorial description for the vertex coordinates of associahedra constructed
earlier by S.~Shnider, S.~Sternberg and J.~Stasheff,~\cite{StasheffShnider, ShniderSternberg, Loday}. Loday's description 
is in terms of labeled binary trees dual to the triangulations of~$Q$. The construction of S.~Shnider, S.~Sternberg and 
J.~Stasheff as well as Loday's vertex description was subsequently generalised by C.~Hohlweg and C.~Lange,~\cite{HohlwegLange}. 
The latter construction explicitly describes realisations~$\As_{n-1}^c$ of $(n-1)$-dimensional associahedra and exhibits them
as generalised permutahedra. The construction depends on the choice of a Coxeter element~$c$ of the symmetric group~$\Sigma_n$ 
on~$n$ elements.

We now outline the construction of~$\As^c_{n-1}$ and avoid the explicit use of Coxeter elements. Nevertheless, we use Coxeter 
elements in our notation to distinguish between different realisations. It is known that the Coxeter elements are in bijection 
to the certain partitions~$\Do_c \sqcup \Up_c$ of~$[n]$. We will use these partitions to obtain labelings~$Q_c$ of $Q$ and 
refer to~$\Do_c$ as \emph{down set} and to~$\Up_c$ as \emph{up set}. The partitions are
\[
	\Do_c=\{d_1=1<d_2<\cdots<d_\ell=n\}
	\qquad\text{and}\qquad
	\Up_c=\{u_1<u_2<\cdots<u_m\},
\] 
so $n=\ell + m$, $|\Do_c|=\ell\geq 2$ and $|\Up_c|=m$. We now obtain the $c$-labeling~$Q_c$ of~$Q$ with label set~$[n+1]\cup\{0\}$ 
as follows. Pick two vertices of~$Q$ which are the end-points of a path with~$\ell+2$ vertices on the 
boundary of~$Q$, label the vertices of this path counter-clockwise increasing using the label 
set~$\overline{\Do}_c:=\Do_c\cup \{ 0, n+1\}$ and label the remaining path clockwise increasing using 
the label set~$\Up_c$. The labeling~$Q_c$ has the property that the label set~$\Do_c$ is always on the 
right-hand side of the diagonal $\{0,n+1\}$ oriented from~$0$ to~$n+1$. To illustrate these $c$-labelings~$Q_c$,
observe that there are four distinct partitions $\Do_c\sqcup \Up_c$ for $n=4$ which yield the four labeled
hexagons~$Q_c$ shown in Figure~\ref{fig:c-labelings_hexagon}.
\begin{figure}
	\centerline{
	\begin{tikzpicture}[thick]
		\draw (30:1cm) -- (90:1cm) -- (150:1cm) -- (210:1cm) -- (270:1cm) -- (330:1cm) -- (30:1cm);
		\filldraw [black] (30:1cm)   node[anchor=west] {\scriptsize$4$} circle (2pt)
						  (90:1cm)  node[anchor=south] {\scriptsize$5$} circle (2pt)
						  (150:1cm) node[anchor=east] {\scriptsize$0$} circle (2pt)
						  (210:1cm) node[anchor=east] {\scriptsize$1$} circle (2pt)
						  (270:1cm) node[anchor=north] {\scriptsize$2$} circle (2pt)
						  (330:1cm) node[anchor=west] {\scriptsize$3$} circle (2pt);
		\draw (0cm,-2cm) node{$\Do_c=\{1,2,3,4\}$};
		\draw (0cm,-2.5cm) node{$\Up_c=\varnothing$};
	\end{tikzpicture}
	\hspace{0.5cm}
	\begin{tikzpicture}[thick]
		\draw (30:1cm) -- (90:1cm) -- (150:1cm) -- (210:1cm) -- (270:1cm) -- (330:1cm) -- (30:1cm);
		\filldraw [black] (30:1cm)   node[anchor=west] {\scriptsize$5$} circle (2pt)
						  (90:1cm)  node[anchor=south] {\scriptsize$2$} circle (2pt)
						  (150:1cm) node[anchor=east] {\scriptsize$0$} circle (2pt)
						  (210:1cm) node[anchor=east] {\scriptsize$1$} circle (2pt)
						  (270:1cm) node[anchor=north] {\scriptsize$3$} circle (2pt)
						  (330:1cm) node[anchor=west] {\scriptsize$4$} circle (2pt);
		\draw (0cm,-2cm) node{$\Do_c=\{1,3,4\}$};
		\draw (0cm,-2.5cm) node{$\Up_c=\{2\}$};
	\end{tikzpicture}
	\hspace{0.5cm}
	\begin{tikzpicture}[thick]
		\draw (30:1cm) -- (90:1cm) -- (150:1cm) -- (210:1cm) -- (270:1cm) -- (330:1cm) -- (30:1cm);
		\filldraw [black] (30:1cm)   node[anchor=west] {\scriptsize$5$} circle (2pt)
						  (90:1cm)  node[anchor=south] {\scriptsize$3$} circle (2pt)
						  (150:1cm) node[anchor=east] {\scriptsize$0$} circle (2pt)
						  (210:1cm) node[anchor=east] {\scriptsize$1$} circle (2pt)
						  (270:1cm) node[anchor=north] {\scriptsize$2$} circle (2pt)
						  (330:1cm) node[anchor=west] {\scriptsize$4$} circle (2pt);
		\draw (0cm,-2cm) node{$\Do_c=\{1,2,4\}$};
		\draw (0cm,-2.5cm) node{$\Up_c=\{3\}$};
	\end{tikzpicture}
	\hspace{0.5cm}
	\begin{tikzpicture}[thick]
		\draw (30:1cm) -- (90:1cm) -- (150:1cm) -- (210:1cm) -- (270:1cm) -- (330:1cm) -- (30:1cm);
		\filldraw [black] (30:1cm)   node[anchor=west] {\scriptsize$3$} circle (2pt)
						  (90:1cm)  node[anchor=south] {\scriptsize$2$} circle (2pt)
						  (150:1cm) node[anchor=east] {\scriptsize$0$} circle (2pt)
						  (210:1cm) node[anchor=east] {\scriptsize$1$} circle (2pt)
						  (270:1cm) node[anchor=north] {\scriptsize$4$} circle (2pt)
						  (330:1cm) node[anchor=west] {\scriptsize$5$} circle (2pt);
		\draw (0cm,-2cm) node{$\Do_c=\{1,4\}$};
		\draw (0cm,-2.5cm) node{$\Up_c=\{2,3\}$};
	\end{tikzpicture}
	}
	\caption{The four possible $c$-labelings~$Q_c$ of a hexagon.}
	\label{fig:c-labelings_hexagon}
\end{figure}
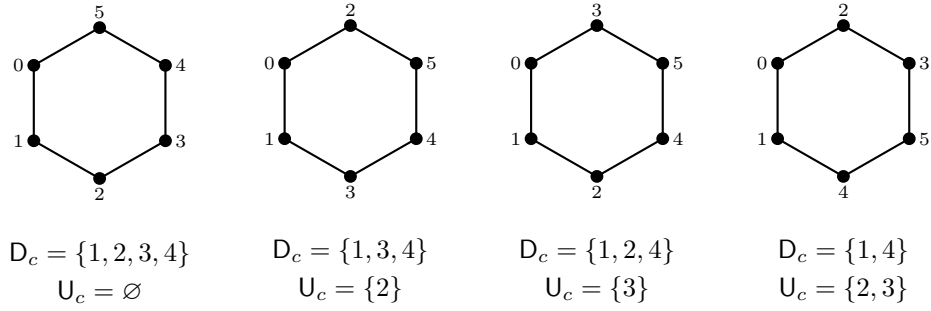
\begin{table}[b]
    \begin{center}
    \begin{minipage}{0.9\linewidth}
	$\Do_c = \{ 1,3,4\}$ and  $\Up_c=\{2\}$:\rule{10cm}{0cm}\\[2mm] 
   \begin{tabular}{l|r|r|r|r|r|r|r|r|r}
  	  $\delta$   & $\{0,3\}$ & $\{0,4\}$ & $\{0,5\}$   & $\{1,2\}$   & $\{1,4\}$ & $\{1,5\}$ & $\{2,3\}$ & $\{2,4\}$   & $\{3,5\}$ \\ \hline
	  $R_\delta$ & $\{1\}$ 	 & $\{1,3\}$ & $\{1,3,4\}$ & $\{2,3,4\}$ & $\{3\}$   & $\{3,4\}$ & $\{1,2\}$ & $\{1,2,3\}$ & $\{4\}$   \\ \hline
	  $\tilde z^c_{R_\delta}$  & 1         & 3       	   & 6           & 6         & 1         & 3         & 3           & 6           & 1 
   \end{tabular}
	$ $\\ $ $\\ $ $\\
	$\Do_c = \{ 1,4\}$ and  $\Up_c=\{2,3\}$:\rule{10cm}{0cm}\\[2mm]
   \begin{tabular}{l|r|r|r|r|r|r|r|r|r}
  	  $\delta$   & $\{0,4\}$  & $\{2,4\}$ & $\{3,4\}$   & $\{0,5\}$ & $\{0,3\}$   & $\{1,2\}$   & $\{2,5\}$   & $\{1,3\}$ & $\{1,5\}$ \\ \hline
	  $R_\delta$ & $\{1\}$ 	  & $\{1,2\}$ & $\{1,2,3\}$ & $\{1,4\}$ & $\{1,3,4\}$ & $\{2,3,4\}$ & $\{1,2,4\}$ & $\{3,4\}$ & $\{4\}$   \\ \hline
	  $\tilde z^c_{R_\delta}$ & $1$         & $3$       & $6$         & $3$       & $6$       & $6$         & $6$       & $3$       & $1$\\
   \end{tabular}
	$ $\\ $ $\\
   \caption{The tables list~$R_\delta$ and~$\tilde z^c_I$ associated to the proper diagonal~$\delta$ of a 
 			labeled hexagon. The upper table corresponds to the associahedron shown on the left of 
			Figure~\ref{fig:a3_associahedra} ($\Do_c=\{1,3,4\}$ and $\Up_c=\{2\}$); the bottom one 
			corresponds to the one on the right ($\Do_c=\{1,4\}$ and $\Up_c=\{2,3\}$).}
	\label{tab:example_table}
\end{minipage}
\end{center}
\end{table}
\enlargethispage{1.5cm}
We derive values~$z_I$ for some subsets~$I\subset [n]$ using oriented proper diagonals of~$Q_c$ as follows. 
Orient each proper diagonal~$\delta$ from the smaller to the larger labeled end-point of~$\delta$, associate 
to~$\delta$ the set~$R_\delta$ that consists of all 
labels on the strict right-hand side of~$\delta$, and replace the elements~$0$ and~$n+1$ by the smaller 
respectively larger label of the end-points contained in~$\Up_c$ if possible. For each proper diagonal~$\delta$ 
we have~$R_\delta \subseteq [n]$ but obviously not every subset of~$[n]$ is of this type if $n>2$. Now set
\[
 	\tilde z_I^c := \begin{cases}
 			  \tfrac{|I|(|I|+1)}{2} & \text{ if }I=R_\delta\text{ for some proper diagonal~$\delta$,}\\
			   -\infty				& \text{ else,}
 			\end{cases}
\]
compare Table~\ref{tab:example_table} for the two associahedra~$\As^c_{3}$ depicted in Figure~\ref{fig:a3_associahedra}
that correspond to two different $c$-labelings of a hexagon. In~\cite{HohlwegLange}
it is shown that $P_n(\{\tilde z_I^c\})$ is in fact an associahedron of dimension~$(n-1)$ realised in~$\mathbb R^n$ 
for every choice of~$c$. In other words, to obtain these associahedra from the classical permutahedron, we make all
inequalities redundant that do not correspond to a proper diagonal of~$Q_c$. Of course, the right-hand sides 
$\tilde z_I^c = -\infty$ are not tight. Proposition~\ref{prop_tight_values_for_z_I} shows how we can compute the tight 
values for~$\tilde z_I^c$ using finite values~$\tilde z_I^c$ of facet-defining inequalities only. Throughout this 
manuscript and for any choice~$c$, the reader 
may refer to this set of tight values~$\{ \tilde z_I^c\}$ to illustrate the results. But we emphasise that this specific 
choice $\{\tilde z_I^c\}$ is only assumed for Statements~\ref{thm:second_y_I_thm}--\ref{cor:characterise_y_I=0}. All 
other results are valid for the larger class of $z_I$-coefficients where is polytope $P_n(\{z_I\})$ is an associahedron
with the same normal fan as some $\As_{n-1}^c$. Proposition~\ref{prop_tight_values_for_z_I} and Theorem~\ref{thm:first_y_I_thm} 
can be applied to this more general situation to obtain tight values for the redundant values~$z_I^c$ and to obtain the 
coefficients~$y_I$ of the Minkowski decomposition into faces of the standard simplex. 
\begin{figure}[b]
      \begin{center}
		 \begin{minipage}{0.95\linewidth}
         \begin{center}
			\begin{tikzpicture}[thick]
				\draw (0:0cm) -- (0:1cm);
				\draw (0:0cm) -- (120:2cm);
				\draw  (120:2cm) -- (60:2cm) -- (1.5cm,0.5*1.732cm) -- (1,0);
				\filldraw [black] (0:0cm) node[anchor=east] {$\left( \begin{smallmatrix} 3\\ 2\\ 1 \end{smallmatrix}\right)$} circle (2pt)
								  (0:1cm) node[anchor=west] {$\left( \begin{smallmatrix} 2\\ 3\\ 1 \end{smallmatrix}\right)$} circle (2pt)
								  (120:1cm) circle (2pt)
								  (120:2cm) node[anchor=east] {$\left( \begin{smallmatrix} 3\\ 0\\ 3 \end{smallmatrix}\right)$} circle (2pt)
								  (60:1cm) circle (2pt)
								  (60:2cm) node[anchor=west] {$\left( \begin{smallmatrix} 1\\ 2\\ 3 \end{smallmatrix}\right)$} circle (2pt)
								  (0cm,1.732cm) circle (2pt)
								  (1.5cm,0.5*1.732cm) node[anchor=west] {$\left( \begin{smallmatrix} 1\\ 3\\ 2 \end{smallmatrix}\right)$} circle (2pt);
			\end{tikzpicture}
			\qquad
			\begin{tikzpicture}[thick]
				\draw (120:1cm) -- (0:0cm) -- (60:1cm) -- (120:1cm);
				\filldraw [black] (0:0cm) node[anchor=east] {$\left( \begin{smallmatrix} 2/3\\ 2/3\\ -1/3 \end{smallmatrix}\right)$} circle (2pt)
								  (120:1cm) node[anchor=east] {$\left( \begin{smallmatrix} 2/3\\ -1/3\\ 2/3 \end{smallmatrix}\right)$} circle (2pt)
								  (60:1cm) node[anchor=west] {$\left( \begin{smallmatrix} -1/3\\ 2/3\\ 2/3 \end{smallmatrix}\right)$} circle (2pt);
			\end{tikzpicture}
			\quad
			\begin{tikzpicture}[thick]
				\draw [dotted](0:0cm) -- (0:1cm);
				\draw [dotted](0:0cm) -- (120:1cm);
				\draw (-0.5cm,0.5*1.732cm) -- (0.5cm,0.5*1.732cm) -- (0:1cm);
				\filldraw [black] (0:0cm) node[anchor=east] {$\left( \begin{smallmatrix} 7/3\\ 4/3\\ 4/3 \end{smallmatrix}\right)$} circle (2pt)
							  	  (0:1cm) node[anchor=west] {$\left( \begin{smallmatrix} 4/3\\ 7/3\\ 4/3 \end{smallmatrix}\right)$} circle (2pt)
								  (120:1cm) node[anchor=east] {$\left( \begin{smallmatrix} 7/3\\ 1/3\\ 7/3 \end{smallmatrix}\right)$} circle (2pt)
								  (60:1cm) node[anchor=west] {$\left( \begin{smallmatrix} 4/3\\ 4/3\\ 7/3 \end{smallmatrix}\right)$} circle (2pt);
		 \end{tikzpicture}
		 $ $\\[2mm]
		 \rule{2.5cm}{0cm}$\As_2^{c_2}$ \rule{1.3cm}{0cm} $=$ \rule{0.90cm}{0cm} $\Phi(\Delta_{\{1,2,3\}})$ \rule{0.5cm}{0cm} 
			$+$ \rule{0.3cm}{0cm} $\left(\begin{matrix} \Phi(\Delta_1)+\Phi(\Delta_2)+\Phi(\Delta_3)\\ 			
														+\Phi(\Delta_{\{1,2\}})+\Phi(\Delta_{\{2,3\}})\end{matrix}\right)$
	  \end{center}
	  \caption[]{The Minkowski decomposition of the $2$-dimensional assiciahedron~$\As_2^{c_1}$ into faces of the simplex $\Phi(\Delta_{[3]})$.}
      \label{fig:second_minkowski_decomp_As_c_2}
      \end{minipage}
      \end{center}
\end{figure}

It is known that realisations~$\As^{c_1}_{n-1}$ and~$\As^{c_2}_{n-1}$ can be linear isometric for certain choices~$c_1$ and~$c_2$ 
and values~$z_I$,~\cite{BergeronHohlwegLangeThomas}. While the two associahedra depicted in Figure~\ref{fig:a3_associahedra} are neither
linear isometric nor do they have the same normal fan, we remark that the associahedra~$\As^{c_1}_{2}$ and~$\As^{c_2}_{2}$ discussed 
in the previous section are linear isometric and the isometry is a point reflection~$\Phi$ in the hyperplane $\sum_{i\in[3]}x_i=6$. 
Although the $z_I$- and $y_I$-values differ for both realisations, they transform according to this isometry. If we consider a 
Minkowski decomposition of~$\As_2^{c_2}$ with respect to the faces of~$\Phi(\Delta_3)$, we obtain precisely the Minkowski coefficients 
of~$\As^{c_1}_2$ with respect to the faces of the standard simplex:
\[
	\As_2^{c_2} = 1\cdot \Phi(\Delta_{\{1\}}) + 1\cdot \Phi(\Delta_{\{2\}}) + 1\cdot \Phi(\Delta_{\{3\}}) 
				+ 1\cdot \Phi(\Delta_{\{1,2\}}) + 0\cdot \Phi(\Delta_{\{1,3\}}) + 1\cdot \Phi(\Delta_{\{2,3\}}) 
				+ 1\cdot \Phi(\Delta_{\{1,2,3\}}),
\]
see Figure~\ref{fig:second_minkowski_decomp_As_c_2} for an illustration. We can weaken this observation a little bit to 
obtain a statement about realisations with linear isomorphic normal fans. Such realisations have been discussed for example
by C.~Ceballos, F.~Santos and G.~M.~Ziegler~\cite{CeballosSantosZiegler}. 
Suppose that~$\Phi$ is a linear isomorphism that maps the normal fan of~$\As^{c_1}_{n-1}$ to the normal fan of~$\As^{c_2}_{n-1}$. 
Then~$\Phi$ induces a transformation between the index sets of the redundant/irredundant inequalities of~$\As^{c_1}_{n-1}$ 
to the redundant/irredundant inequalities of~$\As^{c_2}_{n-1}$. Of course, the right-hand sides of~$\As^{c_1}_{n-1}$ transform 
only into the right-hand sides of~$\As^{c_2}_{n-1}$ if~$\As^{c_2}_{n-1}=\Phi(\As^{c_1}_{n-1})$. Thus we have two Minkowski 
decompositions of~$\As^{c_1}_{n-1}$: one into faces of the standard simplex~$\Delta_n$ as described in Theorem~\ref{thm:first_y_I_thm} 
and another one into faces~$\overline{\Delta}_I$ of~$\Phi(\Delta_n)$. The combinatorial description of the coefficients~$\overline{y}_I$ 
for~$\As^{c_1}_{n-1}$ with respect to faces of~$\Phi(\Delta_n)$ is the same as the description of~$y_I$ for~$\As^{c_2}_{n-1}$ 
with respect to faces of~$\Delta_n$. Of course, to compute the coefficients~$\overline{y}_I$, the values for the right-hand sides 
have to be adjusted to the right-hand sides~$\overline z^{c_1}_I$ of~$\Phi(\As^{c_1}_{n-1})$. As a consequence, the 
combinatorial data that describes the simplification of the M\"obius inversion of Theorem~\ref{thm:first_y_I_thm} is 
already determined by the geometry of the normal fan of~$\As^c_n$ up to linear isomorphism. 

\medskip
We end this section relating~$\As^c_n$ to earlier work. Firstly, we indicate a connection to cambrian fans, 
generalised associahedra and cluster algebras and secondly to convex rank texts and semigraphoids in statistics.
Thirdly, we mention some earlier appearances of specific instances of~$\As^c_{n-1}$ in the literature.

S.~Fomin and A.~Zelevinsky introduced generalised associahedra in the context of cluster algebras of finite 
type,~\cite{FominZelevinsky_Y_systems}, and it is well-known that associahedra are generalised associahedra 
associated to cluster algebras of type~$A$. The construction of~\cite{HohlwegLange} was subsequently generalised 
by C.~Hohlweg, C.~Lange, and H.~Thomas to generalised associahedra,~\cite{HohlwegLangeThomas}, and depends also 
on a Coxeter element~$c$. The geometry of the normal fans of these realisations are determined by combinatorial 
properties of~$c$ and the normal fans are $c$-cambrian fans (introduced by N.~Reading and D.~Speyer in~\cite{ReadingSpeyer_CambrianFans}). 
Reading and Speyer conjectured the existence of a linear isomorphism between $c$-cambrian fans and $g$-vector 
fans associated to cluster algebras of finite type with acyclic initial seed (the notion of a $g$-vector fan
for cluster algebras was introduced by Fomin and Zelevinsky~\cite{FominZelevinsky_Cluster_IV}).  In~\cite{ReadingSpeyer_CombinatorialFramework}, Reading and Speyer describe and relate cambrian and $g$-vector fans in more detail and prove their conjecture 
up to an assumption of another conjecture of~\cite{FominZelevinsky_Cluster_IV}. S.-W.~Yang and A.~Zelevinsky 
gave an alternative proof of the conjecture of 
Reading and Speyer in~\cite{YangZelevinsky_ClusterAlgebrasOfFiniteType}. S.~Stella recently recovered the
the realizations of generalized associahedra for finite type of~\cite{HohlwegLangeThomas} and describes the 
relationship to cluster algebras in detail,~\cite{Stella_PolyhedralModels}.

Generalised permutahedra and therefore the associahedra~$\As^c_{n-1}$ are closely related to the framework of convex 
rank tests and semigraphoids from statistics as discussed by J.~Morton, L.~Pachter, A.~Shiu, B.~Sturmfels, and O.~Wienand~\cite{MortonPachterShiuEtAl}.
The semigraphoid axiom characterises the collection of edges of a permutahedron that can be contracted simultaneously 
to obtain a generalised permutahedron. The authors also study submodular rank tests, its subclass of Minkowski
sum of simplices tests and graphical rank tests. The latter one relates to graph associahedra of M.~Carr and
S.~Devadoss~\cite{CarrDevadoss}. Among the associahedra studied in this manuscript, Loday's realisation fits to 
Minkowski sum of simplices and graphical rank tests. 

Some instances of~$\As^c_{n-1}$ have been studied earlier. For example, the realisations of J.-L.~Loday,~\cite{Loday},
and of G.~Rote, F.~Santos, and I.~Streinu,~\cite{RoteSantosStreinu}, related to one-dimensional point configurations,
are affine equivalent to $\As_{n-1}^c$ if $\Up_c=\varnothing$ or $\Up_c=[n]\setminus\{1,n\}$. For $\Up_c = \varnothing$, the 
Minkowski decomposition into faces of a standard simplex is described by Postnikov in~\cite{Postnikov}. Moreover, G.~Rote, 
F.~Santos, and I.~Streinu point out in Section~$5.3$ that their realisation is not affine equivalent to the realisation 
of F.~Chapoton, S.~Fomin, and A.~Zelevinsky,~\cite{ChapotonFominZelevinsky}. It is not difficult to show that the realisation 
described in~\cite{ChapotonFominZelevinsky} is affine equivalent to $\As_{3}^c$ if $\Up_c=\{2\}$ or $\Up_c=\{3\}$.

%%%%%%%%%%%%%%%%%%%%%%%%%%%%%%%%%%%%%%%%%%%%%%%%%%%%%%%%%%%%%%%%%%%%
\section{Tight values for all $z_I^c$ for $\As_{n-1}^c$} \label{sec_tight_values_for_z_I}
%%%%%%%%%%%%%%%%%%%%%%%%%%%%%%%%%%%%%%%%%%%%%%%%%%%%%%%%%%%%%%%%%%%%

Since the facet-defining inequalities for~$\As^c_{n-1}$ correspond to proper diagonals of~$Q_c$, we know 
precisely the irredundant inequalities for the generalised permutahedron~$P_n(\{\tilde z_I^c\})$. In this 
section, we determine tight values~$\tilde z^c_I$ for all $I\subseteq [n]$ corresponding to redundant 
inequalities in order to be able to compute the coefficients~$y_I$ of the Minkowski decomposition 
of~$\As_{n-1}^c$ as described by Proposition~\ref{prop:ardila}. The concept of an up and down interval 
decomposition induced by the partitioning $\Do_c\sqcup\Up_c$ (or, equivalently, induced by~$c$) of a 
given interval $I\subset[n]$ is a key concept that we introduce first, it allows us to describe any $I\subseteq [n]$ 
in terms of unions and intersections of sets~$R_\delta$ for certain proper diagonals determined by 
this decomposition (or, equivalently, as unions of set differences of certain sets~$R_\delta$ and their complements).

\begin{defn}[up and down intervals]$ $\\
   Let~$\Do_c=\{d_1=1<d_2<\cdots<d_\ell=n\}$ and~$\Up_c=\{u_1<u_2<\cdots<u_m\}$ be the
   partition of~$[n]$ induced by a Coxeter element~$c$.
   \begin{compactenum}[(a)]
   	  \item A set~$S \subseteq [n]$ is a non-empty interval of~$[n]$ if $S = \{ r, r+1, \cdots, s\}$ 
 			for some $0 < r \leq s < n$. We write $S$ as closed interval $[r,s]$ (end-points included) 
			or as open interval $(r-1,s+1)$ (end-points excluded). An empty interval is an open
			interval~$(k,k+1)$ for some $1\leq k < n$.
	  \item A non-empty open down interval is a set~$S \subseteq \Do_c$ such that 
			$S = \{ d_r<d_{r+1}<\cdots<d_s\}$ for some $1\leq r \leq s \leq \ell$. We
			write~$S$ as open down interval~$(d_{r-1},d_{s+1})_{\Do_c}$ where we allow~$d_{r-1}=0$ 
			and~$d_{s+1}=n+1$, i.e. $d_{r-1},d_{s+1}\in \overline{\Do}_c$. For $1\leq r\leq \ell-1$, 
			we also have the empty down interval~$(d_r,d_{r+1})_{\Do_c}$.   
	  \item A closed up interval is a non-empty set~$S \subseteq \Up_c$ such that 
			$S = \{ u_r<u_{r+1}<\cdots<u_s\}$ for some 
			$1\leq r \leq s \leq \ell$. We
			write~$[u_r,u_s]_{\Up_c}$.
   \end{compactenum}
\end{defn}
We often omit the words {\em open} and {\em closed} when we consider down and up intervals.
There will be no ambiguity, because we are not going to deal with closed down intervals 
or open up intervals. Up intervals are always non-empty, while down intervals may be empty.
It will be useful to distinguish the empty down intervals $(d_r,d_{r+1})_{\Do_c}$ 
and $(d_s,d_{s+1})_{\Do_c}$ if~$r\neq s$ although they are equal as sets.

\medskip
It might be helpful to read the following definition of the up and down interval decomposition
in combination with the following Examples~\ref{ex:up_down_eaxample} and~\ref{expl:types_of_diagonals}.
 
\begin{defn}[up and down interval decomposition] \label{defn:up_down_decomp}$ $\\
	Let~$\Do_c=\{d_1=1<d_2<\cdots<d_\ell=n\}$ and~$\Up_c=\{u_1<u_2<\cdots<u_m\}$ be the
	partition of~$[n]$ induced by a Coxeter element~$c$ and $I\subset [n]$ be non-empty.
	The up and down interval decomposition of type $(v,w)$ of~$I$ is a partition of~$I$ 
	into disjoint up and down intervals~$I^\Up_1, \cdots, I^\Up_w$ and $I^\Do_1,\cdots,I^\Do_v$ 
	obtained by the following procedure.
	\begin{compactenum}[1.]
		\item Suppose there are $\tilde v$ non-empty inclusion maximal down intervals of~$I$
		      \label{item:defn_i}
			  denoted by $\tilde I^\Do_k=(\tilde a_k,\tilde b_k)_{\Do_c}$, 
			  $1\leq k\leq \tilde v$, with $\tilde b_k \leq \tilde a_{k+1}$ for 
			  $1\leq k < \tilde v$. Consider also all empty down intervals
			  $E^{\Do}_i=(d_{r_i},d_{r_i+1})_{\Do_c}$ with 
			  $\tilde b_k\leq d_{r_i}<d_{r_i+1}\leq \tilde a_{k+1}$
			  for $0\leq k \leq \tilde v$ where $\tilde b_0=1$ and $\tilde a_{\tilde v+1}=n$.
			  Denote the open intervals $(\tilde a_i,\tilde b_i)$ and $(d_{r_i},d_{r_i+1})$ 
			  of~$[n]$ by~$\tilde I_i$ and~$E_i$ respectively.
		\item Consider all inclusion maximal up intervals of~$I$ contained in some interval~$\tilde I_i$ 
			  or~$E_i$ obtained in Step $1$ and denote these up intervals by 
			  \[
				I^\Up_1=[\alpha_1,\beta_1]_{\Up_c},\cdots,I^\Up_w=[\alpha_w,\beta_w]_{\Up_c}.
			  \]
			  Without loss of generality, we assume $\alpha_i \leq \beta_i < \alpha_{i+1}$.
			  \label{item:defn_ii}
		\item A down interval~$I^\Do_i = (a_i,b_i)_{\Do_c}$, $1\leq i \leq v$, is a down interval 
			  obtained in Step~$1$ that is either a non-empty down interval~$\tilde I^\Do_k$ or an 
			  empty down interval~$E^{\Do}_k$ with the additional property that there is some up
			  interval~$I^\Up_j$ obtained in Step~$2$ such that $I^\Up_j \subseteq E_k$. Without 
			  loss of generality, we assume $b_i \leq a_{i+1}$ for $1\leq i < v$.
			  \label{item:defn_iii}
	\end{compactenum} 
\end{defn}

\begin{expl}\label{ex:up_down_eaxample}
	$ $\\
	We describe the up and down interval decomposition for three subsets of $[4]$ which is
	partitioned into $\Do_c=\{1,3,4\}$ and $\Up_c=\{2\}$ and encourage the reader to sketch the steps.
	\begin{compactenum}[i)]
		\item Consider $J_1=\{2,3\}$.\\
		 	The only non-empty inclusion maximal down interval of $J_1$ is $\tilde I^\Do_1=(1,4)_{\Do_c}=\{3\}$;
			there are no empty down intervals $E^{\Do}_i$ to be considered. As inclusion maximal up intervals of $J_1$ 
			contained in $\tilde I_1=(1,4)=\{2,3\}$, we identify $I^\Up_1=[2,2]_{\Up_c}=\{2\}$. It follows that the
			up and down interval decomposition of~$J_1$ is $(1,4)_{\Do_c} \sqcup [2,2]_{\Up_c}$. Its type is $(1,1)$
		\item Consider $J_2=\{2\}$.\\
			There is no non-empty inclusion maximal down interval of $J_2$ to be considered, but there is one empty down 
			interval $E^{\Do}_1=(1,3)_{\Do_c}$ such that $E_1=(1,3)=\{2\}$ contains one inclusion maximal up interval 
			$I^\Up_1=[2,2]_{\Up_c}=\{2\}$ of $J_2$. It follows that the up and down interval decomposition of~$J_2$ 
			is $(1,3)_{\Do_c}\sqcup [2,2]_{\Up_c}$. Its type is $(1,1)$.
		\item Consider $J_3=\{2,4\}$.\\
			The only non-empty inclusion maximal down interval of $J_3$ is $\tilde I^\Do_1=(3,5)_{\Do_c}=\{4\}$; there
			is one empty down interval $E^{\Do}_1=(1,3)_{\Do_c}$ such that $E_1$ contains an inclusion maximal up interval 
			of $J_3$, this is the up interval $I^\Up_1=[2,2]_{\Up_c}=\{2\}$. There is no non-empty
			inclusion maximal up interval contained in $\tilde I^\Do_1$. It follows that the up and down interval decomposition
			of~$J_3$ is $\left( (1,3)_{\Do_c}\sqcup [2,2]_{\Up_c}\right) \sqcup \left( (3,5)_{\Do_c}\right)$.
			Its type is $(2,1)$.
	\end{compactenum}
\end{expl}

\begin{defn}[nested up and down interval decomposition, nested components]$ $\\
	Let~$\Do_c=\{d_1=1<d_2<\cdots<d_\ell=n\}$ and~$\Up_c=\{u_1<u_2<\cdots<u_m\}$ be the
	partition of~$[n]$ induced by a Coxeter element~$c$ and $I\subset [n]$ be non-empty.
	\begin{compactenum}[(a)]
		\item The up and down interval decomposition of~$I$ is nested if its type is~$(1,w)$. 
		\item A nested component of~$I$ is an inclusion-maximal subset~$J$ of~$I$ such that 
			the up and down decomposition of~$J$ is nested.
	\end{compactenum}
\end{defn}

The definition of a nested up and down interval decomposition can be rephrased as follows: all up intervals are contained in 
the interval~$(a_1,b_1)$ of~$[n]$ obtained from the unique (empty or non-empty) down interval~$I^\Do_1 = (a_1,b_1)_{\Do_c}$.
The following example describes the up and down interval decompositions of $I=R_\delta$ for all proper diagonals~$\delta$
of~$Q_c$. The situation is illustrated in Figure~\ref{fig:example_R_delta_of_diagonal}. As a consequence, we observe that 
the up and down interval decomposition for $R_\delta$ is always nested if $\delta$ is a proper diagonal.

\begin{expl}\label{expl:types_of_diagonals}$ $\\
	Let~$\Do_c=\{d_1=1<d_2<\cdots<d_\ell=n\}$ and~$\Up_c=\{u_1<u_2<\cdots<u_m\}$ be the
	partition of~$[n]$ induced by a Coxeter element~$c$. The proper diagonals~$\delta=\{a,b\}$, 
	$a<b$, of the $c$-labeled polygon~$Q_c$ are in bijection to certain non-empty proper
	subsets~$I\subset [n]$ that have an up and down interval decomposition of type~$(1,0)$,~$(1,1)$, 
	or~$(1,2)$. More precisely, we have 
	\begin{compactenum}[(a)]
		\item $R_\delta=(a,b)_{\Do_c}$ iff $R_\delta$ has an up and down decomposition of type~$(1,0)$.
		\item $R_\delta=(0,b)_{\Do_c}\cup[u_1,a]_{\Up_c}$ or $R_\delta=(a,n+1)_{\Do_c}\cup[b,u_m]_{\Up_c}$ 
			  iff $R_\delta$ has a decomposition of type~$(1,1)$, compare Figure~\ref{fig:example_R_delta_of_diagonal}
			  for an illustration of these two cases.
		\item $R_\delta=(0,n+1)_{\Do_c}\cup[u_1,a]_{\Up_c}\cup[b,u_m]_{Up_c}$ iff $R_\delta$ has an up and 
			  down decomposition of type~$(1,2)$.
	\end{compactenum}
\end{expl}

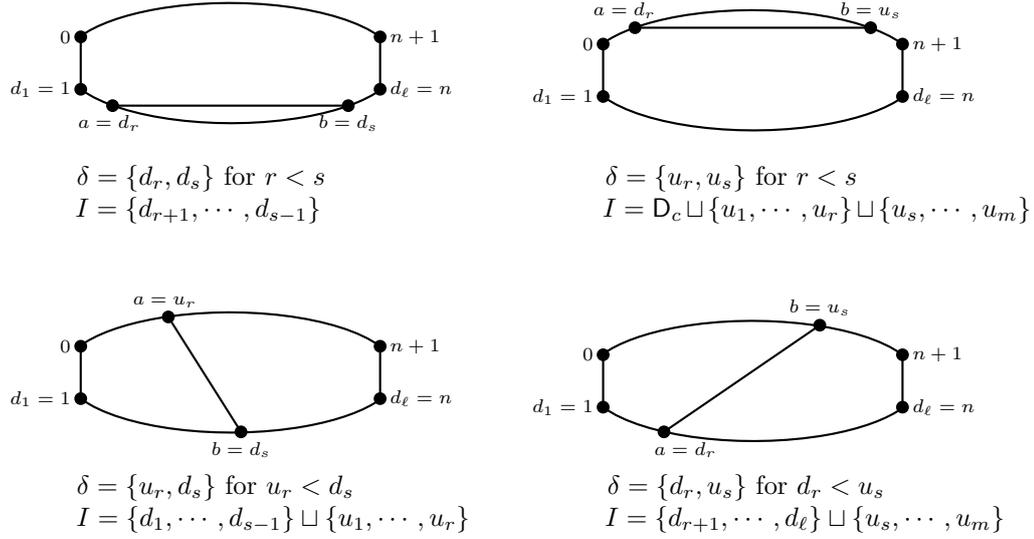
\begin{figure}%[b]
      \begin{center}
      \begin{minipage}{0.95\linewidth}
		     \begin{center}
				\begin{tikzpicture}[thick]
					\filldraw [black] (-10:2cm) circle (2pt)
									  (10:2cm) circle (2pt)
								  	  (170:2cm) circle (2pt)
									  (190:2cm) circle (2pt);
					\draw (-10:2cm) node[anchor=west] {\scriptsize$d_\ell=n$} -- (10:2cm) node[anchor=west] {\scriptsize$n+1$}; 
					\draw (170:2cm) node[anchor=east] {\scriptsize$0$} -- (190:2cm) node[anchor=east] {\scriptsize$d_1=1$}; 
					\draw (10:2cm) arc (26:154:2.2cm and 0.8cm);
					\draw (190:2cm) arc (206:334:2.2cm and 0.8cm);
					\filldraw [black] (340:1.65cm) node[anchor=north] {\scriptsize$b=d_{s}$} circle (2pt)
									  (200:1.65cm) node[anchor=north] {\scriptsize$a=d_{r}\ $} circle (2pt);
					\draw (200:1.65cm) -- (340:1.65cm);
			 \end{tikzpicture}
			 \qquad
			 \begin{tikzpicture}[thick]
				\filldraw [black] (-10:2cm) circle (2pt)
								  (10:2cm) circle (2pt)
								  (170:2cm) circle (2pt)
								  (190:2cm) circle (2pt);
				\draw (-10:2cm) node[anchor=west] {\scriptsize$d_\ell=n$} -- (10:2cm) node[anchor=west] {\scriptsize$n+1$}; 
				\draw (170:2cm) node[anchor=east] {\scriptsize$0$} -- (190:2cm) node[anchor=east] {\scriptsize$d_1=1$}; 
				\draw (10:2cm) arc (26:154:2.2cm and 0.8cm);
				\draw (190:2cm) arc (206:334:2.2cm and 0.8cm);
				\filldraw [black] (20:1.65cm) node[anchor=south] {\scriptsize$b=u_s$} circle (2pt)
								  (160:1.65cm) node[anchor=south] {\scriptsize$a=d_r\quad$} circle (2pt);
				\draw (20:1.65cm) -- (160:1.65cm);
			 \end{tikzpicture}
			\end{center}
			 $ $\\[-2mm]
			 \begin{minipage}{\linewidth}
			 	\rule{1.62cm}{0cm}$\delta=\{d_r,d_s\}$ for $r<s$ \rule{3.45cm}{0cm}  $\delta=\{u_r,u_s\}$ for $r<s$\rule{0.3cm}{0cm}\\
			 	\rule{1.5cm}{0cm} $I=\{d_{r+1},\cdots,d_{s-1}\}$ \rule{3.5cm}{0cm} $I=\Do_c\sqcup\{u_1,\cdots,u_r\}\sqcup\{u_s,\cdots,u_m\}$
			 \end{minipage}
	         $ $\\[2mm]
         	 \begin{center}
			 \begin{tikzpicture}[thick]
				\filldraw [black] (-10:2cm) circle (2pt)
								  (10:2cm) circle (2pt)
							  	  (170:2cm) circle (2pt)
								  (190:2cm) circle (2pt);
				\draw (-10:2cm) node[anchor=west] {\scriptsize$d_\ell=n$} -- (10:2cm) node[anchor=west] {\scriptsize$n+1$}; 
				\draw (170:2cm) node[anchor=east] {\scriptsize$0$} -- (190:2cm) node[anchor=east] {\scriptsize$d_1=1$}; 
				\draw (10:2cm) arc (26:154:2.2cm and 0.8cm);
				\draw (190:2cm) arc (206:334:2.2cm and 0.8cm);
				\filldraw [black] (280:0.8cm) node[anchor=north] {\scriptsize$b=d_{s}$} circle (2pt)
								  (138:1.1cm) node[anchor=south] {\scriptsize$a=u_{r}\ $} circle (2pt);
				\draw (138:1.1cm) -- (280:0.8cm);
		 \end{tikzpicture}
		 \qquad
		 \begin{tikzpicture}[thick]
			\filldraw [black] (-10:2cm) circle (2pt)
							  (10:2cm) circle (2pt)
							  (170:2cm) circle (2pt)
							  (190:2cm) circle (2pt);
			\draw (-10:2cm) node[anchor=west] {\scriptsize$d_\ell=n$} -- (10:2cm) node[anchor=west] {\scriptsize$n+1$}; 
			\draw (170:2cm) node[anchor=east] {\scriptsize$0$} -- (190:2cm) node[anchor=east] {\scriptsize$d_1=1$}; 
			\draw (10:2cm) arc (26:154:2.2cm and 0.8cm);
			\draw (190:2cm) arc (206:334:2.2cm and 0.8cm);
			\filldraw [black] (40:1.15cm) node[anchor=south] {\scriptsize$b=u_s$} circle (2pt)
							  (210:1.35cm) node[anchor=north] {\scriptsize$\qquad a=d_r$} circle (2pt);
			\draw (210:1.35cm) -- (40:1.15cm);
		 \end{tikzpicture}
	  	 \end{center}
		 \begin{minipage}{\linewidth}
			\rule{1.62cm}{0cm}$\delta=\{u_r,d_s\}$ for $u_r<d_s$ \rule{3.1cm}{0cm}  $\delta=\{d_r,u_s\}$ for $d_r<u_s$\rule{0.3cm}{0cm}\\
			\rule{1.5cm}{0cm} $I=\{d_1,\cdots,d_{s-1}\}\sqcup\{u_1, \cdots,u_r\}$ \rule{1.55cm}{0cm} $I=\{d_{r+1},\cdots,d_\ell\}\sqcup\{u_s,\cdots,u_m\}$
	 	 \end{minipage}
	  \caption[]{The four possible situations for a diagonal~$\delta=\{a,b\}$ of Example~\ref{expl:types_of_diagonals}.}
      \label{fig:example_R_delta_of_diagonal}
      \end{minipage}
      \end{center}
\end{figure}

\medskip
\noindent
To simplify notation, we extend the definition of $R_\delta$ to the non-proper diagonals $\delta=\{0,u_1\}$ and $\delta=\{u_m,n+1\}$
by defining $R_{\{0,u_1\}}=R_{\{u_m,n+1\}}=[n]$. An example of the diagonals $\delta_{i,j}$ associated to an up and down interval 
decomposition defined in the next Lemma is discussed and illustrated in Example~\ref{expl:associated_diagonals_to decomp} and 
Figure~\ref{fig:associated_diagonals_delta_ij}.
\begin{lem}$ $\\ \label{lem:nested_diag_decomp} 
	Given the partition~$[n]=\Do_c \sqcup \Up_c$ induced by a Coxeter element~$c$. Let $I$ be a	non-empty proper subset 
	of~$[n]$ with up and down interval decomposition of type~$(v,w)$ and nested components of type~$(1,w_1), \cdots, (1,w_v)$. 
	For $1\leq i \leq v$ and $1\leq j \leq w_i$, denote by~$[\alpha_{i,j},\beta_{i,j}]_{\Up_c}$ the inclusion maximal up 
	intervals contained	in the down interval~$(a_i,b_i)_{\Do_c}$ where $\beta_{i,j}<\alpha_{i,j+1}$ and $b_i\leq a_{i+1}$.
	
	If $w_i=0$ then associate to the nested component $(1,w_i)$ the diagonal $\delta_{i,1}=\{a_i,b_i\}$. If $w_i>0$
	then associate to the nested component $(1,w_i)$ the diagonals
	\[
		\delta_{i,1}   := \{a_i,\alpha_{i,1}\}, \qquad
		\delta_{i,j}   := \{ \beta_{i,j-1},\alpha_{i,j}\} \text{ for $1<j\leq w_i$},\text{ and }\qquad
		\delta_{i,w_i+1} := \{ \beta_{i,w_i},b \}.
	\]
	Then the diagonals $\delta_{i,j}$ are non-crossing and
	\[
				I = \bigcup_{i\in[v]}\  \bigcap_{j\in[w_i+1]}R_{\delta_{i,j}}
				   = \bigcup_{i\in[v]}
						\left (R_{\delta_{i,w_i+1}}
								\setminus
								\Bigl ( \bigcup_{j\in [w_i]} [n]\setminus R_{\delta_{i,j}}
								\Bigr)
						\right ).
	\]
\end{lem}
\begin{proof}
	It follows from the definition of nested components that $\delta_{i,j}$ and $\delta_{i^\prime,j^\prime}$ are non-crossing
	if $i\neq i^\prime$. That $\delta_{i,j}$ and $\delta_{i,j^\prime}$ are non-crossing within a nested component is implied by 
	$\beta_{i,j}<\alpha_{i,j+1}$.
	
	To see the identities on~$I$, we first remark that $I = \bigcap_{j\in[w_1+1]}R_{\delta_{1,j}}$ follows directly from the the 
	up and down interval decomposition of~$I$ and the definition of $R_\delta$ if $I$ has only one nested component. If~$I$ consists 
	of more than one nested component, we obtain the claim since it holds for each nested component separately. The second identity 
	is a simple reformulation of the first. This is easily seen in case of just one nested component: instead of intersecting
	the sets $R_\delta$, we choose $\delta=\delta_{1,w_1+1}$ and remove the complements $[n]\setminus R_{\delta_{1,j}}$, $1\leq j\leq w_1$
	from $R_\delta$. This yields $\bigcap_{j\in[w_1+1]}R_{\delta_{i,j}}$.
\end{proof}

\begin{expl}\label{expl:associated_diagonals_to decomp}$ $\\
	We briefly discuss the diagonals associated to the up and down interval decomposition for the three subsets $J_1=\{2,3\}$, 
	$J_2=\{2\}$ and $J_3=\{2,4\}$ of $[4]$ partitioned by $\Do_c=\{1,3,4\}$ and $\Up_c=\{2\}$. These examples are illustrated 
	in Figure~\ref{fig:associated_diagonals_delta_ij}.
	\begin{compactenum}[i)]
		\item We computed $(1,4)_{\Do_c} \sqcup [2,2]_{\Up_c}$ as up and down interval decomposition for $J_1$ .
			We therefore have the associated diagonals $\delta_{1,1}=\{1,2\}$ and $\delta_{1,2}=\{2,4\}$.
		\item We computed $(1,3)_{\Do_c} \sqcup [2,2]_{\Up_c}$ as up and down interval decomposition for $J_2=\{2\}$.
			The associated diagonals are $\delta_{1,1}=\{1,2\}$ and $\delta_{1,2}=\{2,3\}$. 
		\item We computed $\left( (1,3)_{\Do_c}\sqcup [2,2]_{\Up_c}\right) \sqcup \left( (3,5)_{\Do_c}\right)$ as up and down 
			interval decomposition for $J_3=\{2,4\}$.
			The associated diagonals are $\delta_{1,1}=\{1,2\}$, $\delta_{1,2}=\{2,3\}$ and $\delta_{2,1}=\{3,5\}$.
	\end{compactenum}
\end{expl}

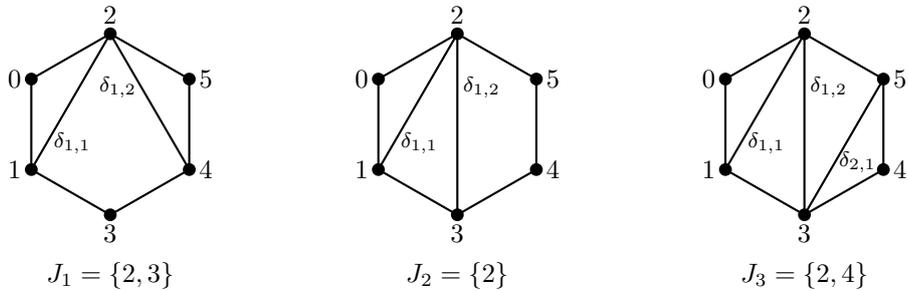
\begin{figure}[b]
      \begin{center}
      \begin{minipage}{0.9\linewidth}
		    \begin{center}
				\begin{tikzpicture}[thick]
					\draw (30:1.2cm) -- (90:1.2cm) -- (150:1.2cm) -- (210:1.2cm) -- (270:1.2cm) -- (330:1.2cm) -- (30:1.2cm);
					\filldraw [black] (30:1.2cm)   node[anchor=west] {$5$} circle (2pt)
									  (90:1.2cm)  node[anchor=south] {$2$} circle (2pt)
									  (150:1.2cm) node[anchor=east] {$0$} circle (2pt)
									  (210:1.2cm) node[anchor=east] {$1$} circle (2pt)
									  (270:1.2cm) node[anchor=north] {$3$} circle (2pt)
									  (330:1.2cm) node[anchor=west] {$4$} circle (2pt);
					\draw (210:1.2cm) -- (90:1.2cm) -- (330:1.2cm);
					\draw (-.5cm,-0.25cm) node{{\footnotesize $\delta_{1,1}$}};
					\draw (.1cm,0.49cm) node{{\footnotesize $\delta_{1,2}$}};
					\draw (0cm,-2cm) node{$J_1=\{2,3\}$};
				\end{tikzpicture}
	$\qquad\qquad$
				\begin{tikzpicture}[thick]
					\draw (30:1.2cm) -- (90:1.2cm) -- (150:1.2cm) -- (210:1.2cm) -- (270:1.2cm) -- (330:1.2cm) -- (30:1.2cm);
					\filldraw [black] (30:1.2cm)   node[anchor=west] {$5$} circle (2pt)
									  (90:1.2cm)  node[anchor=south] {$2$} circle (2pt)
									  (150:1.2cm) node[anchor=east] {$0$} circle (2pt)
									  (210:1.2cm) node[anchor=east] {$1$} circle (2pt)
									  (270:1.2cm) node[anchor=north] {$3$} circle (2pt)
									  (330:1.2cm) node[anchor=west] {$4$} circle (2pt);
					\draw (210:1.2cm) -- (90:1.2cm) -- (270:1.2cm);
					\draw (-.5cm,-0.25cm) node{{\footnotesize $\delta_{1,1}$}};
					\draw (.32cm,0.49cm) node{{\footnotesize $\delta_{1,2}$}};
					\draw (0cm,-2cm) node{$J_2=\{2\}$};
				\end{tikzpicture}
	$\qquad\qquad$
				\begin{tikzpicture}[thick]
					\draw (30:1.2cm) -- (90:1.2cm) -- (150:1.2cm) -- (210:1.2cm) -- (270:1.2cm) -- (330:1.2cm) -- (30:1.2cm);
					\filldraw [black] (30:1.2cm)   node[anchor=west] {$5$} circle (2pt)
									  (90:1.2cm)  node[anchor=south] {$2$} circle (2pt)
									  (150:1.2cm) node[anchor=east] {$0$} circle (2pt)
									  (210:1.2cm) node[anchor=east] {$1$} circle (2pt)
									  (270:1.2cm) node[anchor=north] {$3$} circle (2pt)
									  (330:1.2cm) node[anchor=west] {$4$} circle (2pt);
					\draw (210:1.2cm) -- (90:1.2cm) -- (270:1.2cm) -- (30:1.2cm);
					\draw (-.5cm,-0.25cm) node{{\footnotesize $\delta_{1,1}$}};
					\draw (.32cm,0.49cm) node{{\footnotesize $\delta_{1,2}$}};
					\draw (.7cm,-0.5cm) node{{\footnotesize $\delta_{2,1}$}};
					\draw (0cm,-2cm) node{$J_3=\{2,4\}$};
				\end{tikzpicture}
			\end{center}
     \end{minipage}
		  \caption[]{The associated diagonals~$\delta_{i,j}$ for the three examples considered in Example~\ref{expl:associated_diagonals_to decomp}.}
	      \label{fig:associated_diagonals_delta_ij}
     \end{center}
\end{figure}

The final proposition of this section resolves the quest for tight values~$z_{I}^c$ of all redundant inequalities 
of an associahedron that has the normal fan of~$\As_{n-1}^c$. If we denote this associahedron by $P_n(\{\tilde z^c_I\})$, 
then the inequalities that correspond to an index set~$I=R_\delta$ for some proper diagonal of~$Q_c$ are precisely the facet 
defining inequalities and all other inequalities are redundant.

\begin{prop}$ $\\
	Given the partition~$[n]=\Do_c \sqcup \Up_c$ induced by a Coxeter element~$c$. Let~$I$ be a non-empty proper subset 
	of~$[n]$ with up and down interval decomposition of type~$(v,w)$ and nested components of type~$(1,w_1), \cdots, (1,w_v)$. 
	For $1\leq i \leq v$ and $1\leq j \leq w_i$, denote by~$[\alpha_{i,j},\beta_{i,j}]_{\Up_c}$ the inclusion maximal up 
	intervals contained	in the down interval~$(a_i,b_i)_{\Do_c}$ where $\beta_{i,j}<\alpha_{i,j+1}$ and $b_i\leq a_{i+1}$. 
	For non-empty $I\subseteq [n]$ we set
	\[
	 	z_I^c := \sum_{i\in[v]} \left( \sum_{j\in [w_i+1]} \tilde z_{R_{\delta_{i,j}}}^c
											   -w_i\tilde z^c_{[n]} 
								\right).
	\]
	Then $P(\{z_I^c\})=P(\{\tilde z_I^c\})$ and all~$z_I^c$ are tight.
	\label{prop_tight_values_for_z_I}
\end{prop}
\begin{proof}
	The verification of the inequality is a straightforward calculation:
	\begin{align*}
		\sum_{i\in I}x_i 
   		   &= \sum_{i\in [v]}\ \sum_{k\in\bigcap_{j\in[w_i+1]}R_{\delta_{i,j}}}x_k\\
		   &= \sum_{i\in[v]}
			  \left( \sum_{k\in[v]}x_k -\sum_{j\in[w_{i}+1]}\ \sum_{k\in [n]\setminus R_{\delta_{i,j}}}x_k\right)\\
		   &\geq \sum_{i\in[v]} 
				     \left( \tilde z_{R_{[n]}}^c
							+\sum_{j\in [w_i+1]} \left(\tilde z_{R_{\delta_{i,j}}}^c - \tilde z^c_{[n]}\right)
					 \right).
	\end{align*}
	The first equality is an application of Lemma~\ref{lem:nested_diag_decomp} and the second equality 
	is a simple reformulation. The inequality holds, since $\sum_{i\in R_\delta}x_i \geq \tilde z_{R_\delta}^c$ is
	equivalent to $-\sum_{i \in [n]\setminus R_{\delta}}x_i \geq \tilde z_{R_{\delta}}^c - z_{[n]}$ 
	for every proper diagonal~$\delta$. 
\end{proof}

\begin{defn}$ $\\
	Let~$I$ be a non-empty proper subset of~$[n]$ with up and down interval decomposition of type~$(v,w)$ and nested 
	components of type~$(1,w_1), \cdots, (1,w_v)$. Following notation of Proposition~\ref{prop_tight_values_for_z_I}, 
	we associate diagonals $\delta_{i,j}$ for $1\leq i \leq v$ and $1\leq j \leq w_i$. 
	
	The subset $\mathcal D_I$ of proper diagonals of $\set{\delta_{i,j}}{1\leq i \leq v\text{ and }1\leq j}$
	is called set of proper diagonals associated to~$I$. Similarly, we say that $\delta\in\mathcal D_I$
	is a proper diagonal associated to~$I$. 
\end{defn}

We make a few remarks. First, if a non-proper diagonal $\delta=\{0,u_1\}$ or $\delta=\{u_m,n+1\}$ occurs as a diagonal 
associated to the first or last nested component, the formula for $z_I^c$ in Proposition~\ref{prop_tight_values_for_z_I} 
can be simplified by cancelation of the corresponding terms $\tilde z^c_{[n]}$. Second, for any proper
diagonal $\delta$ of $Q_c$, we obtain $z^c_{R_\delta}=\tilde z^c_{R_\delta}$. An finally, we can characterise the 
face of~$P(\{\tilde z^c_I\})$ that minimises the linear functional $\sum_{i\in I} x_i$ for a given non-empty and proper 
subset $I\subset [n]$.

\begin{cor}$ $\\
	Associate the linear functional $\varphi_I(x)=\sum_{i\in I} x_i$ to a non-empty proper subset~$I\subset[n]$ and 
	denote the facet of~$P(\{\tilde z^c_I\})$ that is supported by $\sum_{i\in R_\delta} x_i=\tilde z^c_{R_\delta}$ 
	for the proper diagonal $\delta$ by $F_{R_\delta}$. Then the intersection $\bigcap_{\delta\in\mathcal D_I}F_{R_\delta}$ 
	is the minimizing face of~$P(\{\tilde z^c_I\})$ for $\varphi_I$.
\end{cor}

%%%%%%%%%%%%%%%%%%%%%%%%%%%%%%%%%%%%%%%%%%%%%%%%%%%%%%%%%%%%%%%%%%%%
\section{Main results and examples} \label{sec:main_results_and_exapmples}
%%%%%%%%%%%%%%%%%%%%%%%%%%%%%%%%%%%%%%%%%%%%%%%%%%%%%%%%%%%%%%%%%%%%

Substitution of Proposition~\ref{prop_tight_values_for_z_I} into Proposition~\ref{prop:ardila} provides a way to compute all 
Minkowski coefficients~$y_I$ since all tight values $z_I^c$ for $\As^c_{n-1}=P_n(\{ z^c_I\})$ are known:
\begin{equation}\label{horrible_equation}
	y_I = \sum_{J\subseteq I}(-1)^{|I\setminus J|}z^c_J
	    = \sum_{J\subseteq I}(-1)^{|I\setminus J|}
				\sum_{i\in[v_J]}\left (  
									\sum_{j\in [w_i+1]} \tilde z_{R_{\delta^J_{i,j}}}^c - w_i \tilde z^c_{[n]}
								\right).
\end{equation}
The goal of this section is to provide two simpler formulae for $y_I$. The first one, given in Theorem~\ref{thm:first_y_I_thm}, 
simplifies Formula~(\ref{horrible_equation}) to at most four non-zero summands for each~$I\subseteq [n]$. The second one, 
stated in Theorem~\ref{thm:second_y_I_thm}, is only valid if the right-hand sides of the facet-defining inequalities 
satisfy $z^c_I=\tfrac{|I|(|I|+1)}{2}$. The values~$y_I$ are then described as a (signed) product of two numbers that measure 
certain paths of~$Q_c$. Theorem~\ref{thm:second_y_I_thm} can be seen as a new aspect to relate combinatorics of the labeled
$n$-gon~$Q_c$ to a construction of~$\As^c_{n-1}$: the coefficients for the Minkowski decomposition into faces of the standard 
simplex can be obtained from the combinatorics of~$Q_c$. Two other relations of the combinatorics of~$Q_c$ to the geometry 
of~$\As_{n-1}^c$ were known before. It is possible to extract the coordinates of the vertices,~\cite{Loday, HohlwegLange}, 
but it is also possible to determine the the facet normals and the right-hand sides for their 
inequalities,~\cite{HohlwegLange}. 

\medskip
From now on, we use the following notation and make some general assumptions unless explicitly mentioned 
otherwise. Let $[n]=\Do_c \sqcup \Up_c$ be the partition of~$[n]$ induced by some fixed Coxeter element~$c$ 
with $\Do_c=\{d_1=1<d_2<\cdots<d_\ell=n\}$ and $\Up_c=\{u_1<\cdots<u_m\}$. A non-empty subset $I\subseteq [n]$ 
with up and  down interval decomposition of type $(v,w)$ has nested components $(1,w_i)$, $1\leq i\leq v$, 
such that the inclusion maximal up intervals~$[\alpha_{i,j},\beta_{i,j}]_{\Up_c}$ contained in the down 
interval~$(a_i,b_i)_{\Do_c}$ satisfy $\beta_{i,j}<\alpha_{i,j+1}$ and $b_i\leq a_{i+1}$. For nested~$I$, 
that is, if $v=1$, we simplify notation and drop one subscript: the up and down interval decomposition 
is $(a,b)_{\Do_c} \cup \bigcup_{j=1}^w [\alpha_j,\beta_j]_{\Up_c}$ where $\alpha_j < \beta_j \leq \alpha_{j+1}$ 
as before. Nevertheless, we do not drop an index for the associated diagonals~$\delta_{ij}$ introduced in
Lemma~\ref{lem:nested_diag_decomp}, we continue to denote them by~$\delta_{i,j}$ or~$\delta_{1,j}$ to avoid
a conflict with the diagonals $\delta_1$, $\delta_2$, $\delta_3$ and $\delta_4$ defined next. To that respect,
we define $\gamma$ (respectively~$\Gamma$) to denote the smallest (respectively largest) element of a nested 
set~$I$ and associate the following four diagonals of the $c$-labeled $(n+2)$-gon~$Q_c$ to a nested set~$I$:
\[
	\delta_1=\{a,b\}, \qquad \delta_2=\{a,\Gamma\}, \qquad \delta_3=\{\gamma,b\}, \qquad\text{and}\qquad \delta_4=\{\gamma,\Gamma\}.
\]  
In general, not all diagonals $\delta_i$ will be proper diagonals, but it will be useful to consider the 
subset~$\mathscr D_I$ of $\{\delta_1,\delta_2,\delta_3,\delta_4\}$ that consists of proper diagonals only. 
We emphasize that the diagonals~$\delta_i$ should be distinguished from the diagonals~$\delta_{i,j}$ defined
in Lemma~\ref{lem:nested_diag_decomp} and the set~$\mathcal D_I$ should be distinguished from~$\mathscr D_I$.

\begin{expl}\label{ex:for_diagonals}
	$ $\\
	We discuss the four diagonals~$\delta_1$, $\delta_2$, $\delta_3$ and $\delta_4$ associated to three subsets of $[4]$ 
	which is partitioned into $\Do_c=\{1,3,4\}$ and $\Up_c=\{2\}$. These associated set $\mathscr D_I$ are illustrated in 	
	Figure~\ref{fig:associated_diagonals_delta_i}.
	\begin{compactenum}[i)]
		\item The up and down interval decomposition of~$J_1=\{2,3\}$ is $(1,4)_{\Do_c} \sqcup [2,2]_{\Up_c}$. Moreover, we
			have $\gamma=2$ and $\Gamma=3$. It follows that 
			\[
				\delta_1=\{1,4\}, \qquad \delta_2=\{1,3\}, \qquad \delta_3=\{2,4\} \qquad\text{and}\qquad \delta_4=\{2,3\}.
			\]  
			In this situation, all diagonals $\delta_i$ except diagonal $\delta_2=\{1,3\}$ are proper diagonals. 
			Therefore, $\mathscr D_I=\{\delta_1, \delta_3, \delta_4\}$
		\item The up and down interval decomposition of~$J_2=\{2\}$ is $(1,3)_{\Do_c}\sqcup [2,2]_{\Up_c}$. Moreover, we have
			$\gamma=\Gamma=2$. This implies
			\[
				\delta_1=\{1,3\}, \qquad \delta_2=\{1,2\}, \qquad \delta_3=\{2,3\} \qquad\text{and}\qquad \delta_4=\{2,2\}.
			\]  
			In this situation, the diagonals~$\delta_1$ and~$\delta_4$ are not proper while the diagonals~$\delta_2$ and $\delta_3$ 
			are proper. Hence, $\mathscr D_I=\{\delta_2, \delta_3\}$.
		\item The set $J_3=\{2,4\}$ is not nested since its up and down interval decomposition is of type~$(2,1)$. We do not associate
			diagonals $\delta_i$ to $J_3$, the set $\mathscr D_I$ is empty.
	\end{compactenum}
\end{expl}

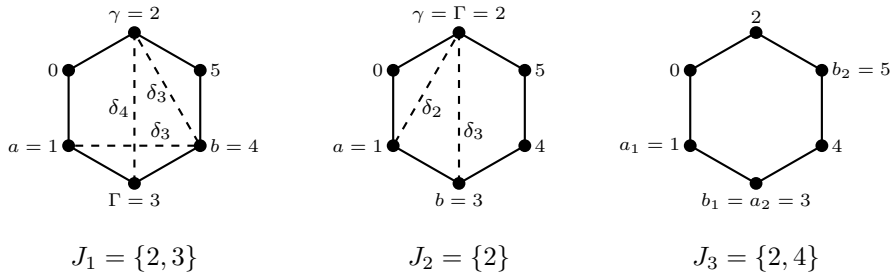
\begin{figure}[b]
      \begin{center}
      \begin{minipage}{0.9\linewidth}
		    \begin{center}
				\begin{tikzpicture}[thick]
					\draw (30:1cm) -- (90:1cm) -- (150:1cm) -- (210:1cm) -- (270:1cm) -- (330:1cm) -- (30:1cm);
					\filldraw [black] (30:1cm)   node[anchor=west] {\scriptsize$5$} circle (2pt)
									  (90:1cm)  node[anchor=south] {\scriptsize$\gamma=2$} circle (2pt)
									  (150:1cm) node[anchor=east] {\scriptsize$0$} circle (2pt)
									  (210:1cm) node[anchor=east] {\scriptsize$a=1$} circle (2pt)
									  (270:1cm) node[anchor=north] {\scriptsize$\Gamma=3$} circle (2pt)
									  (330:1cm) node[anchor=west] {\scriptsize$b=4$} circle (2pt);
					\draw [dashed] (270:1cm) -- (90:1cm) -- (330:1cm) -- (210:1cm);
					\draw (-.2cm,0cm) node{{\footnotesize $\delta_{4}$}};
					\draw (.3cm,0.2cm) node{{\footnotesize $\delta_{3}$}};
					\draw (.35cm,-0.3cm) node{{\footnotesize $\delta_{3}$}};
					\draw (0cm,-2cm) node{$J_1=\{2,3\}$};
				\end{tikzpicture}
				\hspace{0.5cm}
				%$\qquad$
				\begin{tikzpicture}[thick]
					\draw (30:1cm) -- (90:1cm) -- (150:1cm) -- (210:1cm) -- (270:1cm) -- (330:1cm) -- (30:1cm);
					\filldraw [black] (30:1cm)  node[anchor=west] {\scriptsize$5$} circle (2pt)
									  (90:1cm)  node[anchor=south] {\scriptsize$\gamma=\Gamma=2$} circle (2pt)
									  (150:1cm) node[anchor=east] {\scriptsize$0$} circle (2pt)
									  (210:1cm) node[anchor=east] {\scriptsize$a=1$} circle (2pt)
									  (270:1cm) node[anchor=north] {\scriptsize$b=3$} circle (2pt)
									  (330:1cm) node[anchor=west] {\scriptsize$4$} circle (2pt);
					\draw [dashed] (270:1cm) -- (90:1cm) -- (210:1cm);
					\draw (-.35cm,0cm) node{{\footnotesize $\delta_{2}$}};
					%\draw (.3cm,0.2cm) node{{\footnotesize $\delta_{3}$}};
					\draw (.2cm,-0.3cm) node{{\footnotesize $\delta_{3}$}};
					\draw (0cm,-2cm) node{$J_2=\{2\}$};
				\end{tikzpicture}
				\hspace{0.5cm}
				%$\qquad$
				\begin{tikzpicture}[thick]
					\draw (30:1cm) -- (90:1cm) -- (150:1cm) -- (210:1cm) -- (270:1cm) -- (330:1cm) -- (30:1cm);
					\filldraw [black] (30:1cm)   node[anchor=west] {\scriptsize$b_2=5$} circle (2pt)
									  (90:1cm)  node[anchor=south] {\scriptsize$2$} circle (2pt)
									  (150:1cm) node[anchor=east] {\scriptsize$0$} circle (2pt)
									  (210:1cm) node[anchor=east] {\scriptsize$a_1=1$} circle (2pt)
									  (270:1cm) node[anchor=north] {\scriptsize$b_1=a_2=3$} circle (2pt)
									  (330:1cm) node[anchor=west] {\scriptsize$4$} circle (2pt);
					\draw (0cm,-2cm) node{$J_3=\{2,4\}$};
				\end{tikzpicture}
			\end{center}
     \end{minipage}
		  \caption[]{The diagonals of~$\mathscr D_{J}$ (the proper diagonals among the associated diagonals~$\delta_{i}$) for the three 
		 			examples of Example~\ref{ex:for_diagonals}.}
	      \label{fig:associated_diagonals_delta_i}
     \end{center}
\end{figure}

We now extend our definition of~$R_\delta$ and~$z_{R_\delta}$ to all non-proper and degenerate diagonals~$\delta$. 
If~$\delta=\{0,n+1\}$ and~$\Up_c=\varnothing$ we set $R_\delta:=[n]$ and $z_{R_\delta}^c=z^c_{[n]}$. 
Otherwise, if~$\delta=\{x,y\}$ is not a proper diagonal (different from~$\delta=\{0,n+1\}$ and~$\Up_c=\varnothing$), we set:
\[
	R_\delta := \begin{cases}
					\varnothing & \text{if $x,y\in\overline\Do_c$}\\
					[n]			& \text{otherwise,}
				\end{cases}
\qquad\text{and}\qquad
	z_{R_\delta}^c := \begin{cases}
						0			& \text{if $R_\delta=\varnothing$}\\
						z^c_{[n]}	& \text{if $R_\delta=[n].$}
					\end{cases}
\]
The main result, Theorem~\ref{thm:first_y_I_thm}, actually combines two statements. Firstly, there is a more efficient way 
to compute the coefficients of the Minkowski decomposition of an associahedron~$\As_{n-1}^c=P(\{z^c_I\})$ compared to the 
alternating sum proposed by Proposition~\ref{prop:ardila}. Secondly, the terms~$z^c_I$ for redundant inequalities that are 
needed to compute~$y_I$ are combinatorially characterised and depend on the choice of~$c$ or equivalently on the normal fan 
of~$\As_{n-1}^c$. Of course, their precise values depend on the values~$z^c_I$ of inequalities that are facet-defining.

\begin{thm}\label{thm:first_y_I_thm}$ $\\
	Let $I$ be non-empty subset of~$[n]$. Then the Minkowski coefficient~$y_I$ of~$\As_{n-1}^c=P(\{z^c_I\})$ is 
	\[
		y_I = \begin{cases}
				(-1)^{|I\setminus R_{\delta_1}|}
				\left(
					z_{R_{\delta_1}}^c-z_{R_{\delta_2}}^c-z_{R_{\delta_3}}^c+z_{R_{\delta_4}}^c
				\right)
																			  & \text{if $v=1$,}\\
				0															  & \text{otherwise.}
			  \end{cases}
	\]
\end{thm}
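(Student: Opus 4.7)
My plan is to prove the theorem by rearranging the explicit formula for $y_I$ obtained when Proposition~\ref{prop_tight_values_for_z_I} is substituted into Proposition~\ref{prop:ardila}, namely~\eqref{horrible_equation}. Starting from this double sum over subsets $J\subseteq I$ and over the diagonals $\delta^J_{i,j}$ produced by the up-and-down decomposition of each~$J$ (together with correction terms in $\tilde z^c_{[n]}$), I would swap the order of summation so that
\[
	y_I \;=\; \sum_\delta \lambda_\delta\, \tilde z^c_{R_\delta} + \lambda_{[n]}\cdot \tilde z^c_{[n]},
\]
where $\delta$ ranges over proper diagonals of $Q_c$. Each coefficient $\lambda_\delta$ is an alternating sum of $\pm(-1)^{|I\setminus J|}$ over those subsets~$J\subseteq I$ whose decomposition produces~$\delta$, and $\lambda_{[n]}$ collects the contributions of the correction terms $-(|W^J_i|-1)\tilde z^c_{[n]}$.

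The second step is to characterise, for a fixed diagonal $\delta=\{x,y\}$, the set of subsets~$J\subseteq I$ that actually produce~$\delta$ in their decomposition. Using Definition~\ref{defn:up_down_decomp} and Example~\ref{expl:types_of_diagonals}, this condition translates into combinatorial constraints: some elements of~$I$ must lie in~$J$, others must be excluded from~$J$, and the remaining elements of~$I$ may be chosen freely. Each \emph{unconstrained} element of~$I$ contributes a factor $(1+(-1))=0$ to~$\lambda_\delta$. Consequently $\lambda_\delta=0$ unless the presence of~$\delta$ in the decomposition determines the membership in~$J$ of \emph{every} element of~$I$.

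The third step is to identify the diagonals that fully constrain~$I$. If~$I$ has a non-nested decomposition ($v>1$), then any diagonal $\delta$ can only control the elements of~$I$ lying inside a single nested component of~$J$, so elements of~$I$ in the other components remain free and $\lambda_\delta=0$ for every~$\delta$; a parallel counting argument shows $\lambda_{[n]}=0$ as well, giving $y_I=0$. If~$I$ is nested, then the fully-constraining diagonals are precisely $\delta_1,\delta_2,\delta_3,\delta_4$: their endpoints are the only admissible ``outermost'' candidates, coming from the unique down interval $(a,b)_{\Do_c}$ and from the smallest and largest elements~$\gamma,\Gamma$ of~$I$. A direct count of forced-in and forced-out elements for each of these four diagonals yields the signed coefficients $+1,-1,-1,+1$ asserted by the theorem, and the global sign $(-1)^{|I\setminus R_{\delta_1}|}$ arises because $|I\setminus R_{\delta_1}|$ equals the number of elements of~$I$ that every one of the four constraining conditions forces \emph{out} of~$J$.

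The main obstacle is the combinatorial case analysis required in the second and third steps. The behaviour of the decomposition depends sensitively on whether each endpoint of~$\delta$ lies in~$\Do_c$ or~$\Up_c$, on whether~$\delta$ is a proper diagonal or a boundary edge $\{0,u_1\}$ or $\{u_m,n+1\}$ of~$Q_c$, and on whether the down interval of~$I$ reaches the boundary (i.e.\ $a=0$ or $b=n+1$). In addition, the correction terms $-(|W^J_i|-1)\tilde z^c_{[n]}$ must be tracked through the rearrangement to verify $\lambda_{[n]}=0$ in every case, which is easiest to do after absorbing them into the main sum via the extended definitions of $R_\delta$ and $z^c_{R_\delta}$ for non-proper diagonals. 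Degenerate situations, such as $\Up_c=\varnothing$ or coincidences among the four $\delta_i$, must also be handled separately. It is this accumulation of cases---rather than any single conceptual difficulty---that forces the proof to be deferred to Sections~\ref{sec:proof_main_thm} and~\ref{sec_characterisation_of_possible_D_I}.
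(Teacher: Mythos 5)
Your outline follows the same strategy as the paper: substitute Proposition~\ref{prop_tight_values_for_z_I} into Proposition~\ref{prop:ardila} to get Equation~(\ref{horrible_equation}), swap the order of summation so that each proper diagonal~$\delta$ collects a coefficient, note that any element of~$I$ left free by the constraint ``$\delta\in\mathcal D_J$'' contributes a factor $(1+(-1))=0$, and conclude that only the four diagonals $\delta_1,\dots,\delta_4$ survive when $I$ is nested, while no diagonal survives when $v>1$. This is exactly the structure of Proposition~\ref{prop:y_I_for_non-degenerate_D_I}, Lemmas~\ref{lem:other_special_D_I}--\ref{lem_relate_signs}, Lemma~\ref{lem_y_I_for_multiple_components}, and Lemma~\ref{lem:y_I_for_[n]} in the paper.

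There is, however, a genuine gap in the handling of the $\tilde z^c_{[n]}$ terms, which your third step glosses over. Write $\lambda_\delta$ for the coefficient of $\tilde z^c_{R_\delta}$ and $\mu_\delta$ for the coefficient of $-\tilde z^c_{[n]}$ contributed by~$\delta$. The free-element cancellation $(1+(-1))=0$ proves $\lambda_\delta=0$ for every~$\delta$ other than $\delta_1,\dots,\delta_4$, since it sums $(-1)^{|I\setminus J|}$ over \emph{all} $J$ with $\delta\in\mathcal D_J$. But $\mu_\delta$ is a sum over only those~$J$ for which~$\delta$ is \emph{not} the rightmost proper diagonal of its nested component, and that subset of the $J$'s does not inherit the same cancellation. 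Concretely, when $b=n+1$, $\Gamma=n$, and there exists $u\in\Up_c$ with $[u,n]\subseteq I$ (and likewise when $I=[n]$), the diagonals $\{a,u\}$ and $\{\gamma,u\}$ for $u\in\Up_c$ have $\lambda_\delta=0$ but $\mu_\delta\neq 0$; these nonvanishing $\pm\tilde z^c_{[n]}$ contributions do not disappear individually. The paper's Cases~2.b.ii and~3.b in the proof of Proposition~\ref{prop:y_I_for_non-degenerate_D_I}, and the four-case analysis in Lemma~\ref{lem:y_I_for_[n]}, are devoted to showing that the two resulting families of terms cancel \emph{against each other} by a telescoping argument — this is not obtained by ``absorbing correction terms via the extended definitions'' and it is not a parallel of the $\lambda_\delta$ computation. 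Until that cancellation is supplied, your proof has a hole precisely in the boundary configurations. The rest of the plan, including the identification of $\delta_1,\dots,\delta_4$, the sign identities of Lemma~\ref{lem_relate_signs}, and the $v>1$ case, is sound and matches the paper.
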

\begin{figure}
      \begin{center}
      \begin{minipage}{\linewidth}
         \begin{center}
		 \vspace{-1.5cm}
         \begin{overpic}[height=0.9\linewidth,angle=90]{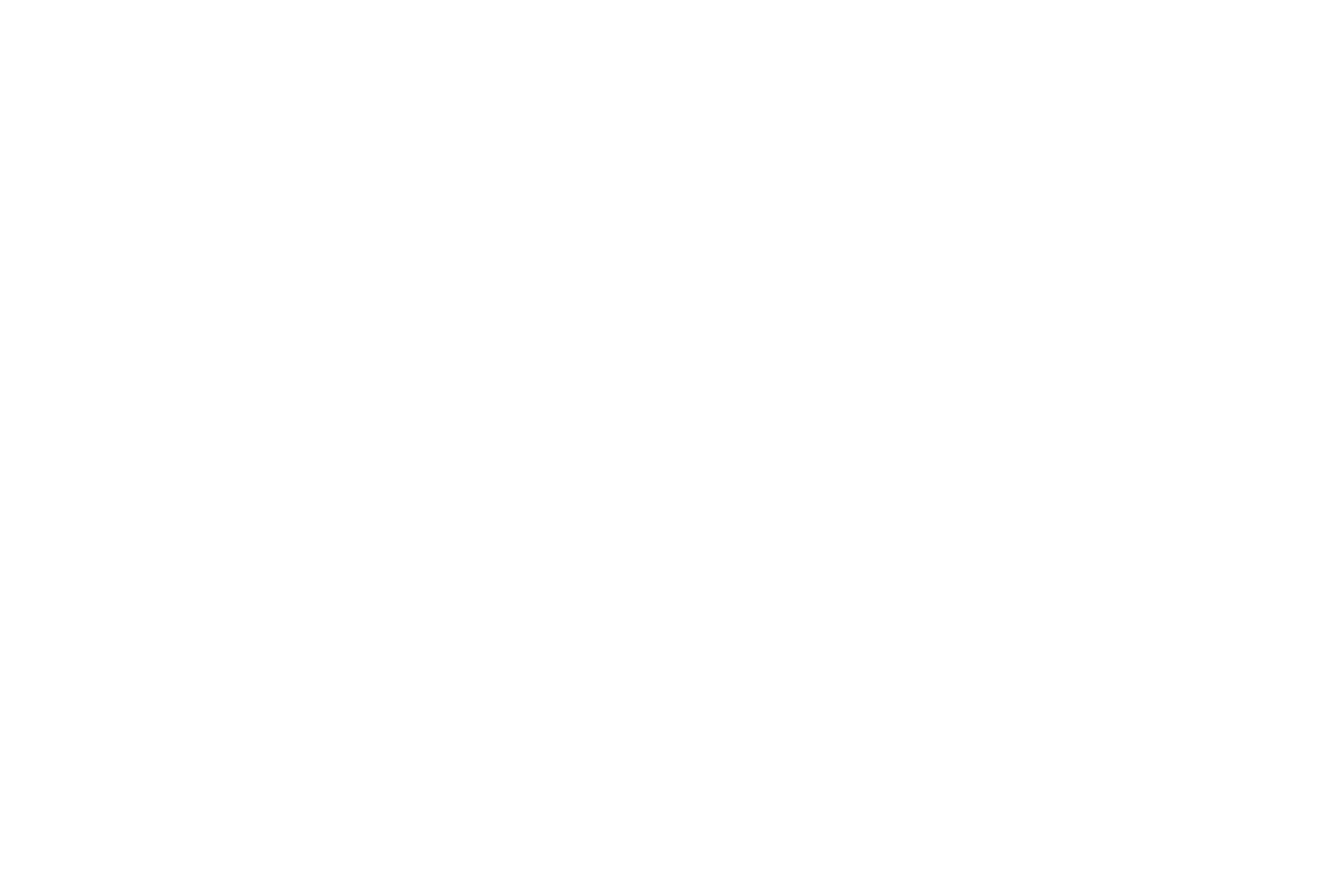}
%%%%%%%%%%%%
%unterstes level
%%%%%%%%%%%%
			\put(-20,90){
				\begin{overpic}[width=\linewidth]{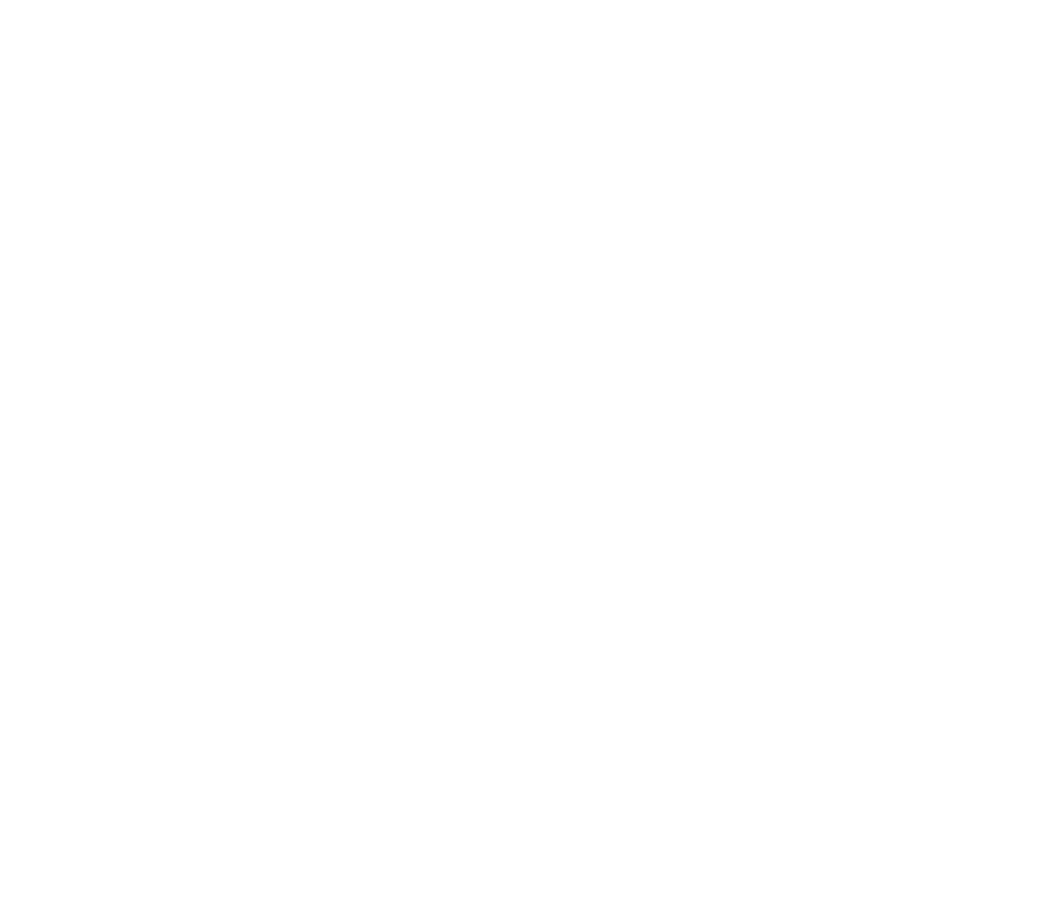}
				\put(0,20){\begin{sideways}$I=\{1,2,3\}$\end{sideways}}
				\put(20,5){
					\begin{overpic}[width=0.05\linewidth,angle=90]{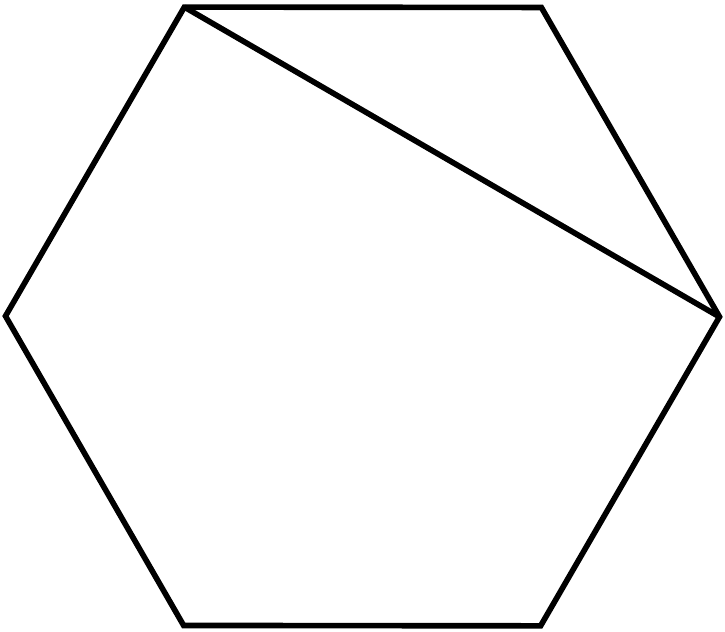}
           				\put(6,-5){\scriptsize \begin{sideways}$0$\end{sideways}}
           				\put(19,3){\scriptsize \begin{sideways}$1$\end{sideways}}
           				\put(-6,2){\scriptsize \begin{sideways}$2$\end{sideways}}
           				\put(19,14){\scriptsize \begin{sideways}$3$\end{sideways}}
           				\put(6,21){\scriptsize \begin{sideways}$4$\end{sideways}}
           				\put(-6,13){\scriptsize \begin{sideways}$5$\end{sideways}}
           				\put(-10,39){\footnotesize \begin{sideways}$\begin{matrix} a=0 \\ b=4 \\ \gamma=1 \\ \Gamma=3 \end{matrix}$\end{sideways}}
					\end{overpic}
					}
				\put(60,5){
					\begin{overpic}[width=0.05\linewidth,angle=90]{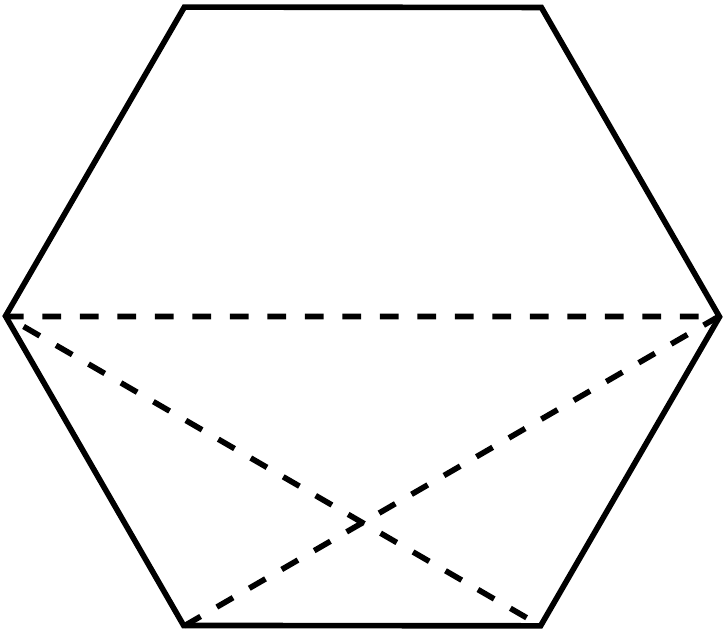}
           				\put(6,-5){\scriptsize \begin{sideways}$0$\end{sideways}}
           				\put(19,2){\scriptsize \begin{sideways}$1$\end{sideways}}
           				\put(-6,3){\scriptsize \begin{sideways}$2$\end{sideways}}
           				\put(19,14){\scriptsize \begin{sideways}$3$\end{sideways}}
           				\put(6,21){\scriptsize \begin{sideways}$4$\end{sideways}}
           				\put(-6,13){\scriptsize \begin{sideways}$5$\end{sideways}}
           				\put(-10,30){\footnotesize \begin{sideways}$\begin{matrix} \delta_1=\{0,4\} \\ 
																 \delta_2=\{0,3\} \\ 
																 \delta_3=\{1,4\} \\ 
																  
												  \end{matrix}$\end{sideways}}
					\end{overpic}
					}
				\put(95,20){\begin{sideways}$y_{\{1,2,3\}}=-1$\end{sideways}}
				\end{overpic}
				}
			\put(-20,180){
				\begin{overpic}[width=\linewidth]{pics/6times8_white_rectangl}
				\put(0,20){\begin{sideways}$I=\{1,2,4\}$\end{sideways}}
				\put(20,5){
					\begin{overpic}[width=0.05\linewidth,angle=90]{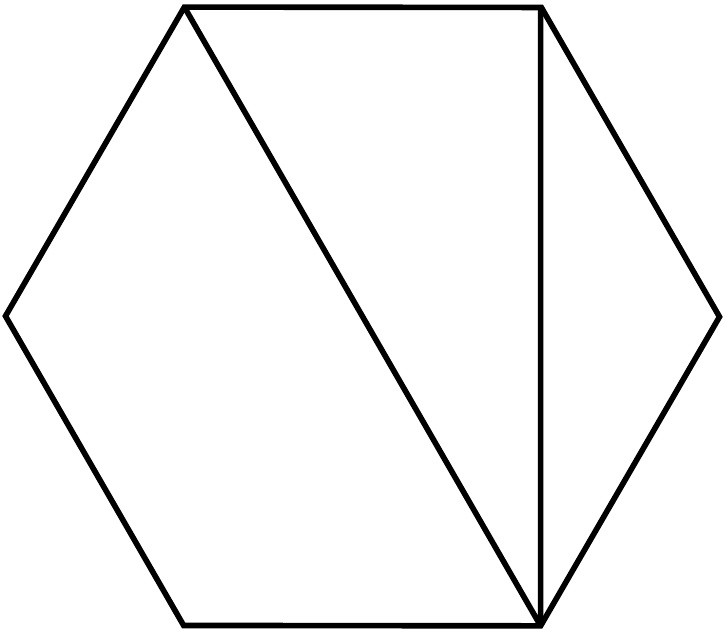}
           				\put(6,-5){\scriptsize \begin{sideways}$0$\end{sideways}}
           				\put(19,3){\scriptsize \begin{sideways}$1$\end{sideways}}
           				\put(-6,2){\scriptsize \begin{sideways}$2$\end{sideways}}
           				\put(19,14){\scriptsize \begin{sideways}$3$\end{sideways}}
           				\put(6,21){\scriptsize \begin{sideways}$4$\end{sideways}}
           				\put(-6,13){\scriptsize \begin{sideways}$5$\end{sideways}}
           				\put(-10,30){\footnotesize \begin{sideways}$\begin{matrix}  \\  \text{two nested}\\  \text{components!}\\  \end{matrix}$\end{sideways}}
					\end{overpic}
					}
				\put(95,20){\begin{sideways}$y_{\{1,2,4\}}=0$\end{sideways}}
				\end{overpic}
				}
			\put(-20,270){
				\begin{overpic}[width=\linewidth]{pics/6times8_white_rectangl}
				\put(0,20){\begin{sideways}$I=\{1,3,4\}$\end{sideways}}
				\put(20,5){
					\begin{overpic}[width=0.05\linewidth,angle=90]{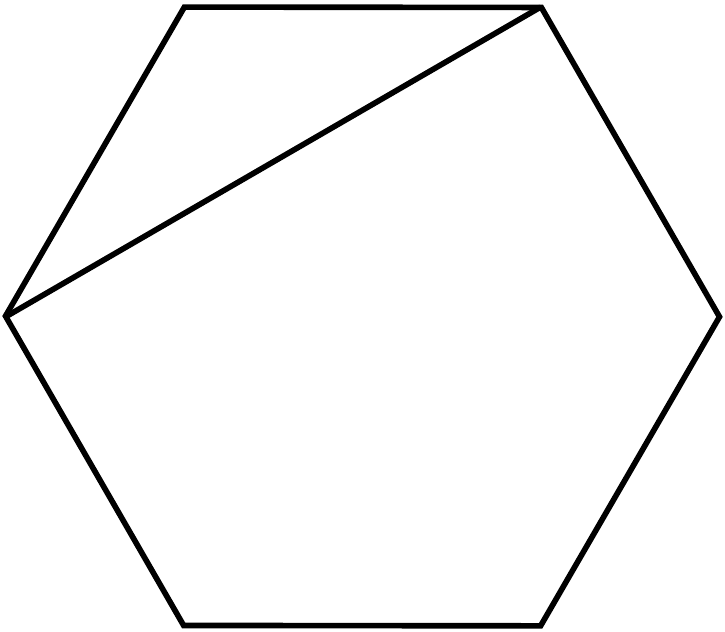}
           				\put(6,-5){\scriptsize \begin{sideways}$0$\end{sideways}}
           				\put(19,3){\scriptsize \begin{sideways}$1$\end{sideways}}
           				\put(-6,2){\scriptsize \begin{sideways}$2$\end{sideways}}
           				\put(19,14){\scriptsize \begin{sideways}$3$\end{sideways}}
           				\put(6,21){\scriptsize \begin{sideways}$4$\end{sideways}}
           				\put(-6,13){\scriptsize \begin{sideways}$5$\end{sideways}}
           				\put(-10,39){\footnotesize \begin{sideways}$\begin{matrix} a=0 \\ b=5 \\ \gamma=1 \\ \Gamma=4 \end{matrix}$\end{sideways}}
					\end{overpic}
					}
				\put(60,5){
					\begin{overpic}[width=0.05\linewidth,angle=90]{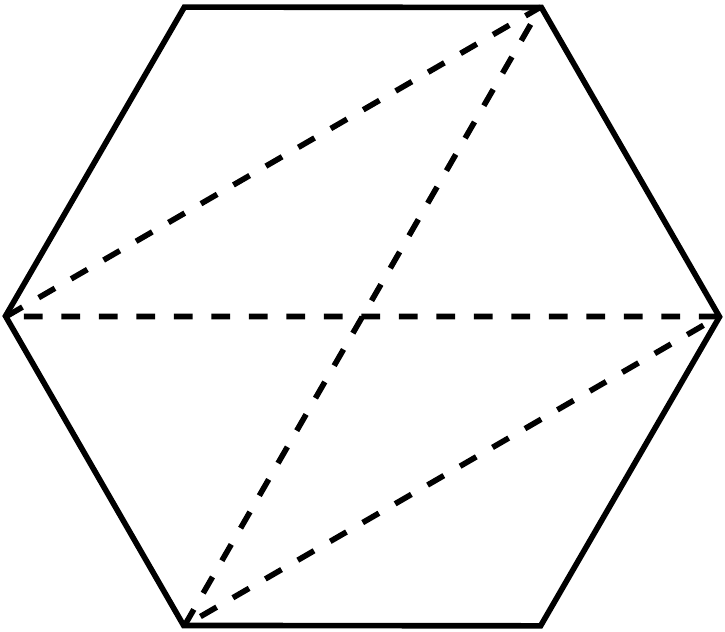}
           				\put(6,-5){\scriptsize \begin{sideways}$0$\end{sideways}}
           				\put(19,2){\scriptsize \begin{sideways}$1$\end{sideways}}
           				\put(-6,3){\scriptsize \begin{sideways}$2$\end{sideways}}
           				\put(19,14){\scriptsize \begin{sideways}$3$\end{sideways}}
           				\put(6,21){\scriptsize \begin{sideways}$4$\end{sideways}}
           				\put(-6,13){\scriptsize \begin{sideways}$5$\end{sideways}}
           				\put(-10,30){\footnotesize \begin{sideways}$\begin{matrix} \delta_1=\{0,5\} \\ 
																 \delta_2=\{0,4\} \\ 
																 \delta_3=\{1,5\} \\ 
																 \delta_4=\{1,4\} 
												  \end{matrix}$\end{sideways}}
					\end{overpic}
					}
				\put(95,20){\begin{sideways}$y_{\{1,3,4\}}=1$\end{sideways}}
				\end{overpic}
				}
			\put(-20,360){
				\begin{overpic}[width=\linewidth]{pics/6times8_white_rectangl}
				\put(0,20){\begin{sideways}$I=\{2,3,4\}$\end{sideways}}
				\put(20,5){
					\begin{overpic}[width=0.05\linewidth,angle=90]{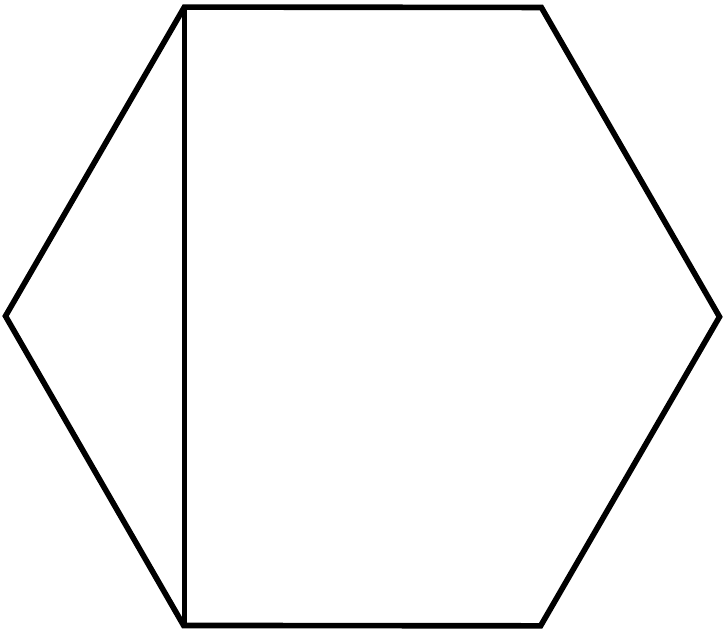}
           				\put(6,-5){\scriptsize \begin{sideways}$0$\end{sideways}}
           				\put(19,3){\scriptsize \begin{sideways}$1$\end{sideways}}
           				\put(-6,2){\scriptsize \begin{sideways}$2$\end{sideways}}
           				\put(19,14){\scriptsize \begin{sideways}$3$\end{sideways}}
           				\put(6,21){\scriptsize \begin{sideways}$4$\end{sideways}}
           				\put(-6,13){\scriptsize \begin{sideways}$5$\end{sideways}}
           				\put(-10,39){\footnotesize \begin{sideways}$\begin{matrix} a=1 \\ b=5 \\ \gamma=2 \\ \Gamma=4 \end{matrix}$\end{sideways}}
					\end{overpic}
					}
				\put(60,5){
					\begin{overpic}[width=0.05\linewidth,angle=90]{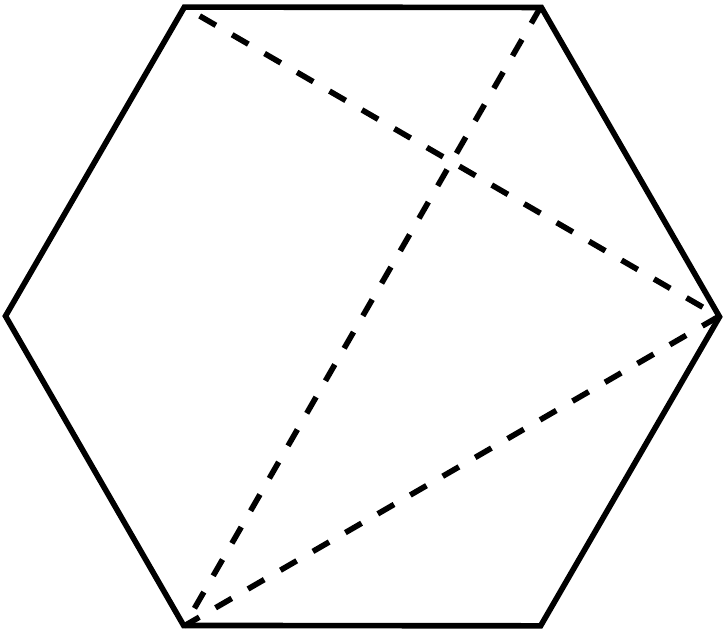}
           				\put(6,-5){\scriptsize \begin{sideways}$0$\end{sideways}}
           				\put(19,2){\scriptsize \begin{sideways}$1$\end{sideways}}
           				\put(-6,3){\scriptsize \begin{sideways}$2$\end{sideways}}
           				\put(19,14){\scriptsize \begin{sideways}$3$\end{sideways}}
           				\put(6,21){\scriptsize \begin{sideways}$4$\end{sideways}}
           				\put(-6,13){\scriptsize \begin{sideways}$5$\end{sideways}}
           				\put(-10,30){\footnotesize \begin{sideways}$\begin{matrix} \delta_1=\{1,5\} \\ 
																 \delta_2=\{1,4\} \\ 
																  \\ 
																 \delta_4=\{2,4\} 
												  \end{matrix}$\end{sideways}}
					\end{overpic}
					}
				\put(95,20){\begin{sideways}$y_{\{2,3,4\}}=2$\end{sideways}}
				\end{overpic}
				}
%
%%%%%%%%%%%%
%mittleres level
%%%%%%%%%%%%
			\put(120,0){
				\begin{overpic}[width=\linewidth]{pics/6times8_white_rectangl}
				\put(00,20){\begin{sideways}$I=\{1,2\}$\end{sideways}}
				\put(20,5){
					\begin{overpic}[width=0.05\linewidth,angle=90]{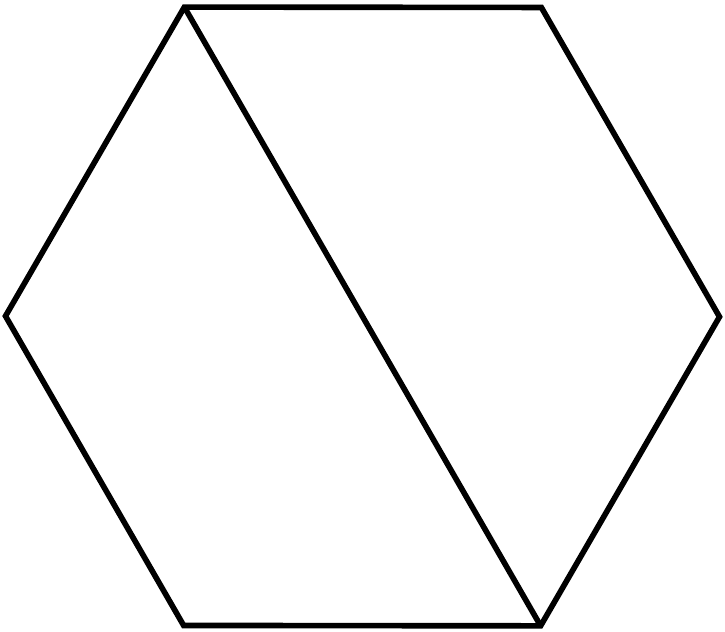}
						\put(6,-5){\scriptsize \begin{sideways}$0$\end{sideways}}
						\put(19,3){\scriptsize \begin{sideways}$1$\end{sideways}}
						\put(-6,2){\scriptsize \begin{sideways}$2$\end{sideways}}
						\put(19,14){\scriptsize \begin{sideways}$3$\end{sideways}}
						\put(6,21){\scriptsize \begin{sideways}$4$\end{sideways}}
						\put(-6,13){\scriptsize \begin{sideways}$5$\end{sideways}}
						\put(-10,39){\footnotesize \begin{sideways}$\begin{matrix} a=0 \\ b=3 \\ \gamma=1 \\ \Gamma=2 \end{matrix}$\end{sideways}}
					\end{overpic}
					}
				\put(60,5){
					\begin{overpic}[width=0.05\linewidth,angle=90]{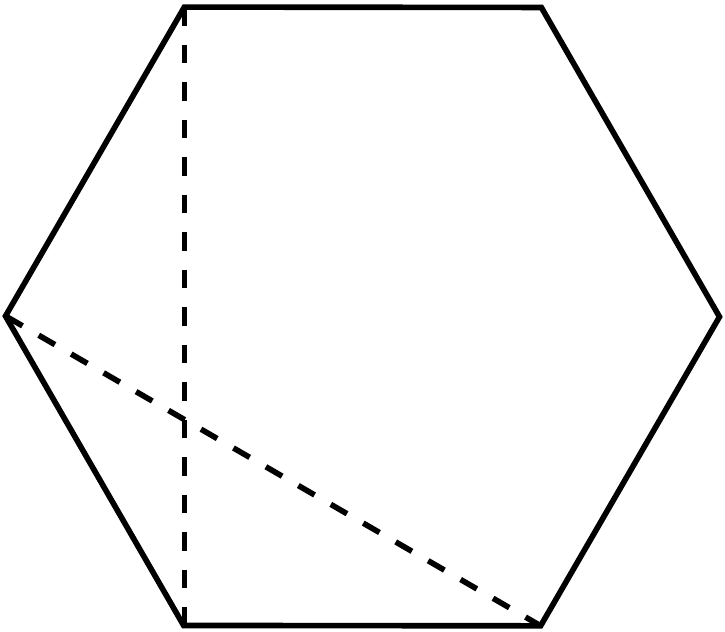}
						\put(6,-5){\scriptsize \begin{sideways}$0$\end{sideways}}
						\put(19,2){\scriptsize \begin{sideways}$1$\end{sideways}}
						\put(-6,3){\scriptsize \begin{sideways}$2$\end{sideways}}
						\put(19,14){\scriptsize \begin{sideways}$3$\end{sideways}}
						\put(6,21){\scriptsize \begin{sideways}$4$\end{sideways}}
						\put(-6,13){\scriptsize \begin{sideways}$5$\end{sideways}}
						\put(-10,30){\footnotesize \begin{sideways}$\begin{matrix} \delta_1=\{0,3\} \\ 
																  \\ 
																  \\ 
																 \delta_4=\{1,2\} 
												  \end{matrix}$\end{sideways}}
					\end{overpic}
					}
				\put(95,20){\begin{sideways}$y_{\{1,2\}}=3$\end{sideways}}
				\end{overpic}
				}
			\put(120,90){
				\begin{overpic}[width=\linewidth]{pics/6times8_white_rectangl}
				\put(00,20){\begin{sideways}$I=\{1,3\}$\end{sideways}}
				\put(20,5){
					\begin{overpic}[width=0.05\linewidth,angle=90]{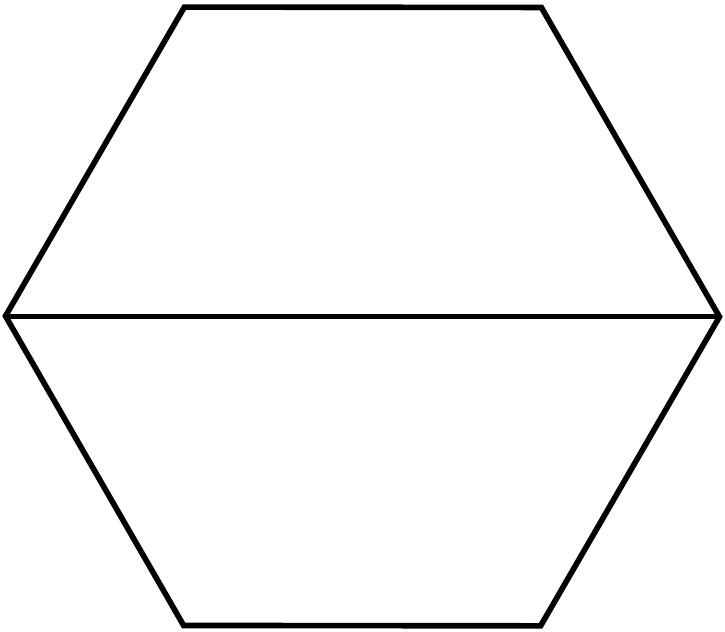}
						\put(6,-5){\scriptsize \begin{sideways}$0$\end{sideways}}
						\put(19,3){\scriptsize \begin{sideways}$1$\end{sideways}}
						\put(-6,2){\scriptsize \begin{sideways}$2$\end{sideways}}
						\put(19,14){\scriptsize \begin{sideways}$3$\end{sideways}}
						\put(6,21){\scriptsize \begin{sideways}$4$\end{sideways}}
						\put(-6,13){\scriptsize \begin{sideways}$5$\end{sideways}}
						\put(-10,39){\footnotesize \begin{sideways}$\begin{matrix} a=0 \\ b=4 \\ \gamma=1 \\ \Gamma=3 \end{matrix}$\end{sideways}}
					\end{overpic}
					}
				\put(60,5){
					\begin{overpic}[width=0.05\linewidth,angle=90]{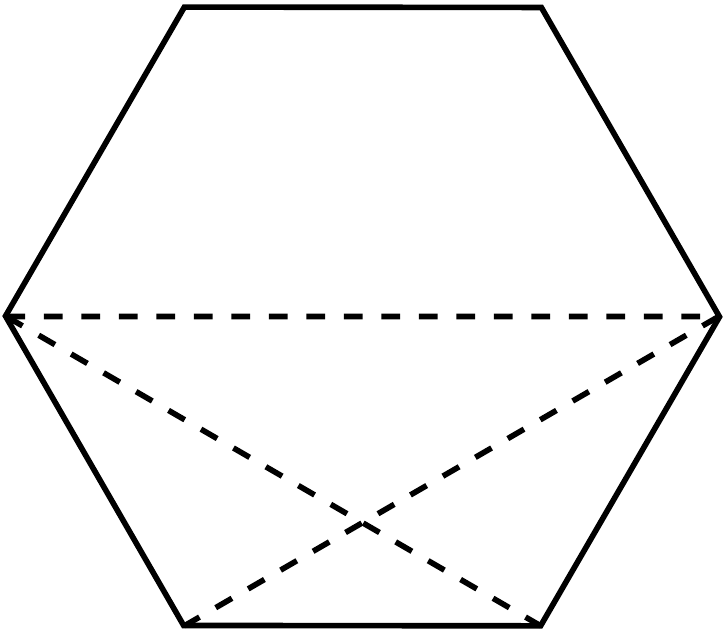}
						\put(6,-5){\scriptsize \begin{sideways}$0$\end{sideways}}
						\put(19,2){\scriptsize \begin{sideways}$1$\end{sideways}}
						\put(-6,3){\scriptsize \begin{sideways}$2$\end{sideways}}
						\put(19,14){\scriptsize \begin{sideways}$3$\end{sideways}}
						\put(6,21){\scriptsize \begin{sideways}$4$\end{sideways}}
						\put(-6,13){\scriptsize \begin{sideways}$5$\end{sideways}}
						\put(-10,30){\footnotesize \begin{sideways}$\begin{matrix} \delta_1=\{0,4\} \\ 
																 \delta_2=\{0,3\} \\ 
																 \delta_3=\{1,4\} \\ 
													  
												  \end{matrix}$\end{sideways}}
					\end{overpic}
					}
				\put(95,20){\begin{sideways}$y_{\{1,3\}}=1$\end{sideways}}
				\end{overpic}
				}
			\put(120,180){
				\begin{overpic}[width=\linewidth]{pics/6times8_white_rectangl}
				\put(00,20){\begin{sideways}$I=\{1,4\}$\end{sideways}}
				\put(20,5){
					\begin{overpic}[width=0.05\linewidth,angle=90]{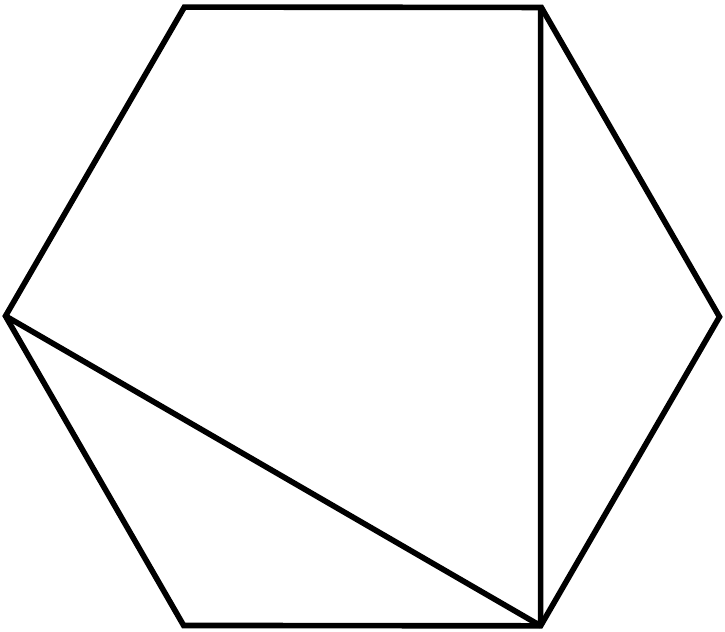}
						\put(6,-5){\scriptsize \begin{sideways}$0$\end{sideways}}
						\put(19,3){\scriptsize \begin{sideways}$1$\end{sideways}}
						\put(-6,2){\scriptsize \begin{sideways}$2$\end{sideways}}
						\put(19,14){\scriptsize \begin{sideways}$3$\end{sideways}}
						\put(6,21){\scriptsize \begin{sideways}$4$\end{sideways}}
						\put(-6,13){\scriptsize \begin{sideways}$5$\end{sideways}}
						\put(-10,30){\footnotesize \begin{sideways}$\begin{matrix}  \\  \text{two nested}\\  \text{components!}\\  \end{matrix}$\end{sideways}}
					\end{overpic}
					}
				\put(95,20){\begin{sideways}$y_{\{1,4\}}=0$\end{sideways}}
				\end{overpic}
				}
			\put(120,270){
				\begin{overpic}[width=\linewidth]{pics/6times8_white_rectangl}
				\put(00,20){\begin{sideways}$I=\{2,3\}$\end{sideways}}
				\put(20,5){
					\begin{overpic}[width=0.05\linewidth,angle=90]{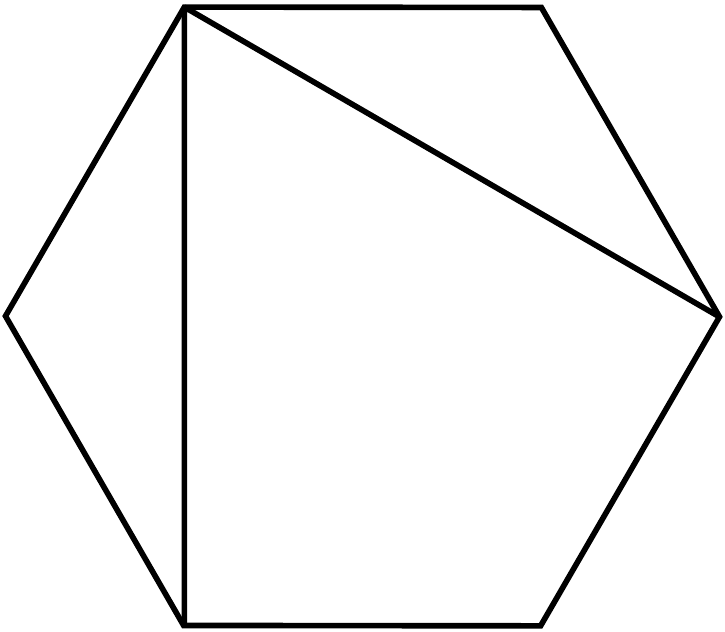}
						\put(6,-5){\scriptsize \begin{sideways}$0$\end{sideways}}
						\put(19,3){\scriptsize \begin{sideways}$1$\end{sideways}}
						\put(-6,2){\scriptsize \begin{sideways}$2$\end{sideways}}
						\put(19,14){\scriptsize \begin{sideways}$3$\end{sideways}}
						\put(6,21){\scriptsize \begin{sideways}$4$\end{sideways}}
						\put(-6,13){\scriptsize \begin{sideways}$5$\end{sideways}}
						\put(-10,39){\footnotesize \begin{sideways}$\begin{matrix} a=1 \\ b=4 \\ \gamma=2 \\ \Gamma=3 \end{matrix}$\end{sideways}}
					\end{overpic}
					}
				\put(60,5){
					\begin{overpic}[width=0.05\linewidth,angle=90]{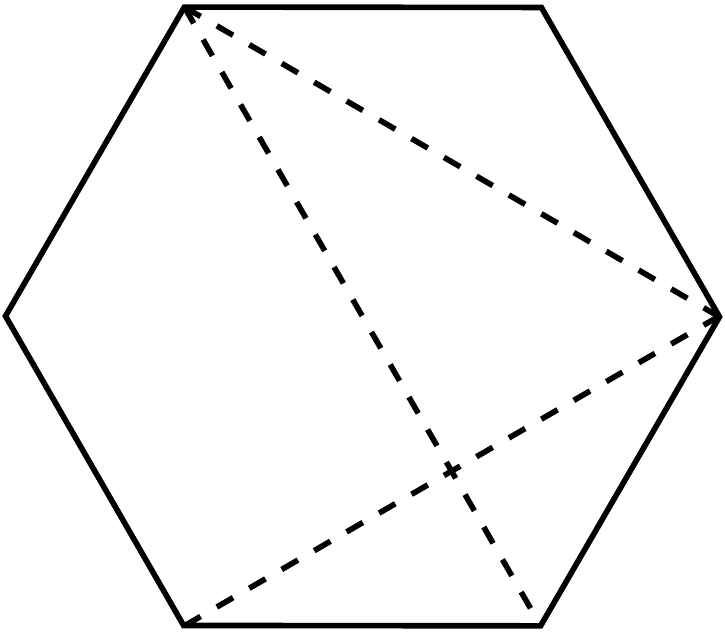}
						\put(6,-5){\scriptsize \begin{sideways}$0$\end{sideways}}
						\put(19,2){\scriptsize \begin{sideways}$1$\end{sideways}}
						\put(-6,3){\scriptsize \begin{sideways}$2$\end{sideways}}
						\put(19,14){\scriptsize \begin{sideways}$3$\end{sideways}}
						\put(6,21){\scriptsize \begin{sideways}$4$\end{sideways}}
						\put(-6,13){\scriptsize \begin{sideways}$5$\end{sideways}}
						\put(-10,30){\footnotesize \begin{sideways}$\begin{matrix} \delta_1=\{1,4\} \\ 
																  \\ 
																 \delta_3=\{2,4\} \\ 
																 \delta_4=\{2,3\} 
												  \end{matrix}$\end{sideways}}
					\end{overpic}
					}
				\put(95,20){\begin{sideways}$y_{\{2,3\}}=2$\end{sideways}}
				\end{overpic}
				}
			\put(120,360){
				\begin{overpic}[width=\linewidth]{pics/6times8_white_rectangl}
				\put(00,20){\begin{sideways}$I=\{2,4\}$\end{sideways}}
				\put(20,5){
					\begin{overpic}[width=0.05\linewidth,angle=90]{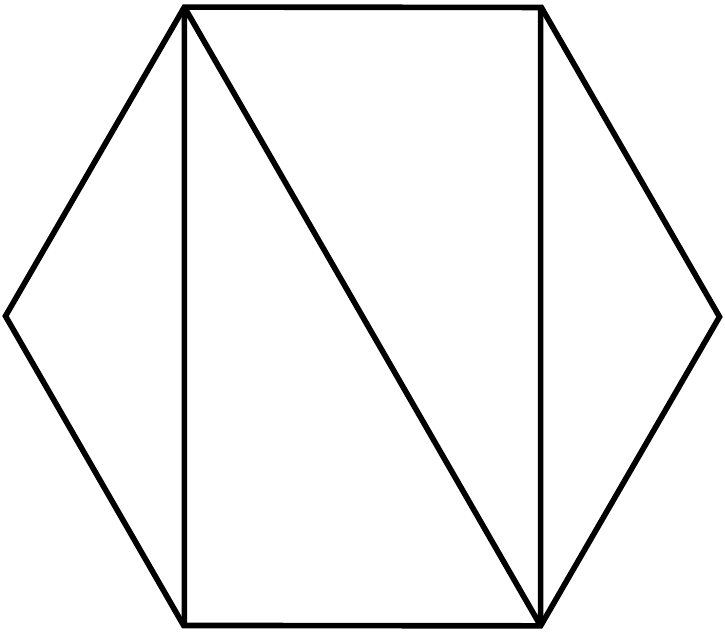}
						\put(6,-5){\scriptsize \begin{sideways}$0$\end{sideways}}
						\put(19,3){\scriptsize \begin{sideways}$1$\end{sideways}}
						\put(-6,2){\scriptsize \begin{sideways}$2$\end{sideways}}
						\put(19,14){\scriptsize \begin{sideways}$3$\end{sideways}}
						\put(6,21){\scriptsize \begin{sideways}$4$\end{sideways}}
						\put(-6,13){\scriptsize \begin{sideways}$5$\end{sideways}}
						\put(-10,30){\footnotesize \begin{sideways}$\begin{matrix}  \\  \text{two nested}\\  \text{components!}\\  \end{matrix}$\end{sideways}}
					\end{overpic}
					}
				\put(95,20){\begin{sideways}$y_{\{2,4\}}=0$\end{sideways}}
				\end{overpic}
				}
			\put(120,450){
				\begin{overpic}[width=\linewidth]{pics/6times8_white_rectangl}
				\put(00,20){\begin{sideways}$I=\{3,4\}$\end{sideways}}
				\put(20,5){
					\begin{overpic}[width=0.05\linewidth,angle=90]{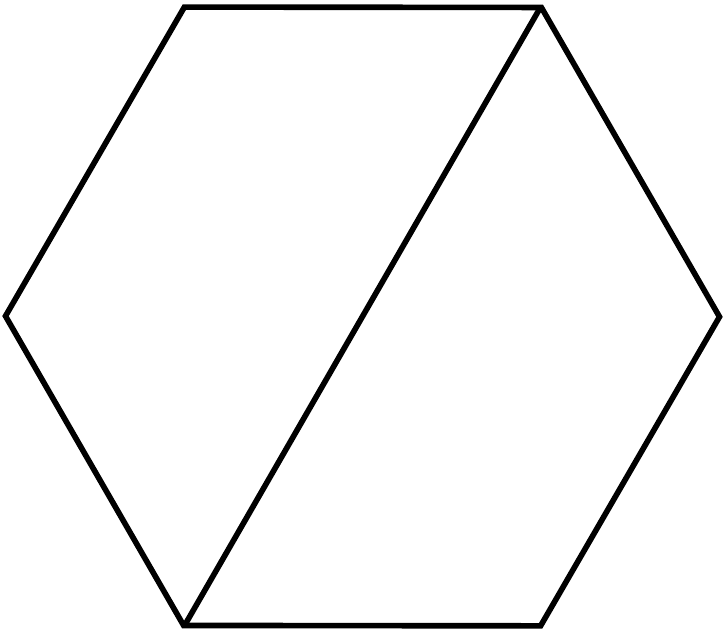}
						\put(6,-5){\scriptsize \begin{sideways}$0$\end{sideways}}
						\put(19,3){\scriptsize \begin{sideways}$1$\end{sideways}}
						\put(-6,2){\scriptsize \begin{sideways}$2$\end{sideways}}
						\put(19,14){\scriptsize \begin{sideways}$3$\end{sideways}}
						\put(6,21){\scriptsize \begin{sideways}$4$\end{sideways}}
						\put(-6,13){\scriptsize \begin{sideways}$5$\end{sideways}}
						\put(-10,39){\footnotesize \begin{sideways}$\begin{matrix} a=1 \\ b=5 \\ \gamma=3 \\ \Gamma=4 \end{matrix}$\end{sideways}}
					\end{overpic}
					}
				\put(60,5){
					\begin{overpic}[width=0.05\linewidth,angle=90]{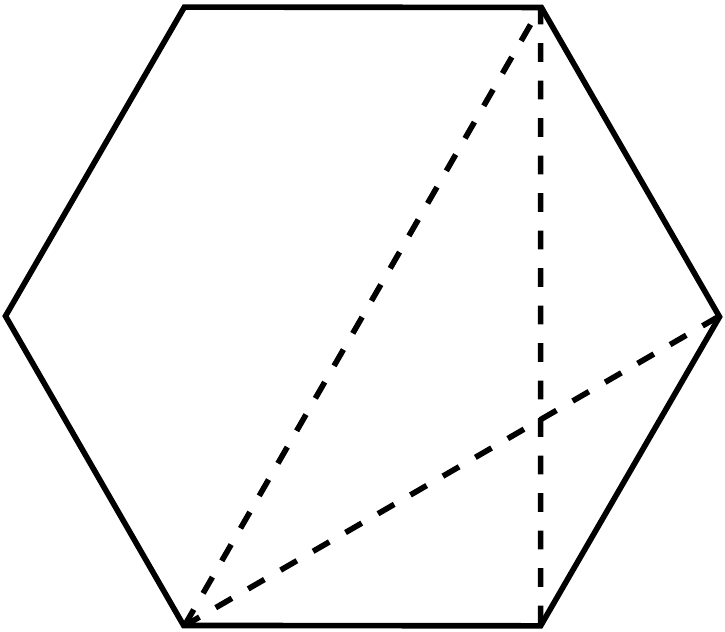}
						\put(6,-5){\scriptsize \begin{sideways}$0$\end{sideways}}
						\put(19,2){\scriptsize \begin{sideways}$1$\end{sideways}}
						\put(-6,3){\scriptsize \begin{sideways}$2$\end{sideways}}
						\put(19,14){\scriptsize \begin{sideways}$3$\end{sideways}}
						\put(6,21){\scriptsize \begin{sideways}$4$\end{sideways}}
						\put(-6,13){\scriptsize \begin{sideways}$5$\end{sideways}}
						\put(-10,30){\footnotesize \begin{sideways}$\begin{matrix} \delta_1=\{1,5\} \\ 
																 \delta_2=\{1,4\} \\ 
																 \delta_3=\{3,5\} \\ 
													  
												  \end{matrix}$\end{sideways}}
					\end{overpic}
					}
				\put(95,20){\begin{sideways}$y_{\{3,4\}}=1$\end{sideways}}
				\end{overpic}
				}
%
%
%%%%%%%%%%%%%%
%unterstes level
%%%%%%%%%%%%
			\put(260,90){
				\begin{overpic}[width=\linewidth]{pics/6times8_white_rectangl}
				\put(00,20){\begin{sideways}$I=\{1\}$\end{sideways}}
				\put(20,5){
					\begin{overpic}[width=0.05\linewidth,angle=90]{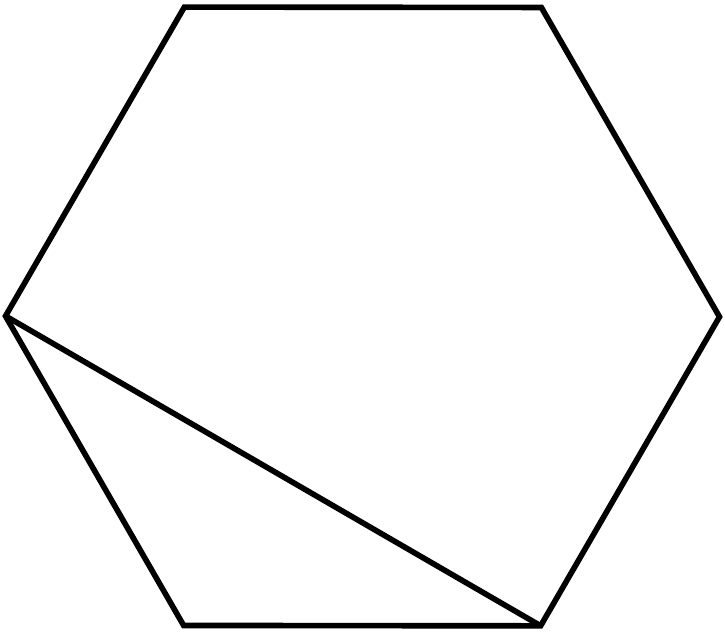}
						\put(6,-5){\scriptsize \begin{sideways}$0$\end{sideways}}
						\put(19,3){\scriptsize \begin{sideways}$1$\end{sideways}}
						\put(-6,2){\scriptsize \begin{sideways}$2$\end{sideways}}
						\put(19,14){\scriptsize \begin{sideways}$3$\end{sideways}}
						\put(6,21){\scriptsize \begin{sideways}$4$\end{sideways}}
						\put(-6,13){\scriptsize \begin{sideways}$5$\end{sideways}}
						\put(-10,39){\footnotesize \begin{sideways}$\begin{matrix} a=0 \\ b=3 \\ \gamma=1 \\ \Gamma=1 \end{matrix}$\end{sideways}}
					\end{overpic}
					}
				\put(60,5){
					\begin{overpic}[width=0.05\linewidth,angle=90]{pics/D_1_hexagon}
						\put(6,-5){\scriptsize \begin{sideways}$0$\end{sideways}}
						\put(19,2){\scriptsize \begin{sideways}$1$\end{sideways}}
						\put(-6,3){\scriptsize \begin{sideways}$2$\end{sideways}}
						\put(19,14){\scriptsize \begin{sideways}$3$\end{sideways}}
						\put(6,21){\scriptsize \begin{sideways}$4$\end{sideways}}
						\put(-6,13){\scriptsize \begin{sideways}$5$\end{sideways}}
						\put(-10,30){\footnotesize \begin{sideways}$\begin{matrix} \delta_1=\{0,3\} \\ 
																 \\ 
																 \\ 
																  
												  \end{matrix}$\end{sideways}}
					\end{overpic}
					}
				\put(95,20){\begin{sideways}$y_{\{1\}}=1$\end{sideways}}
				\end{overpic}
				}
		\put(260,180){
			\begin{overpic}[width=\linewidth]{pics/6times8_white_rectangl}
			\put(00,20){\begin{sideways}$I=\{2\}$\end{sideways}}
			\put(20,5){
				\begin{overpic}[width=0.05\linewidth,angle=90]{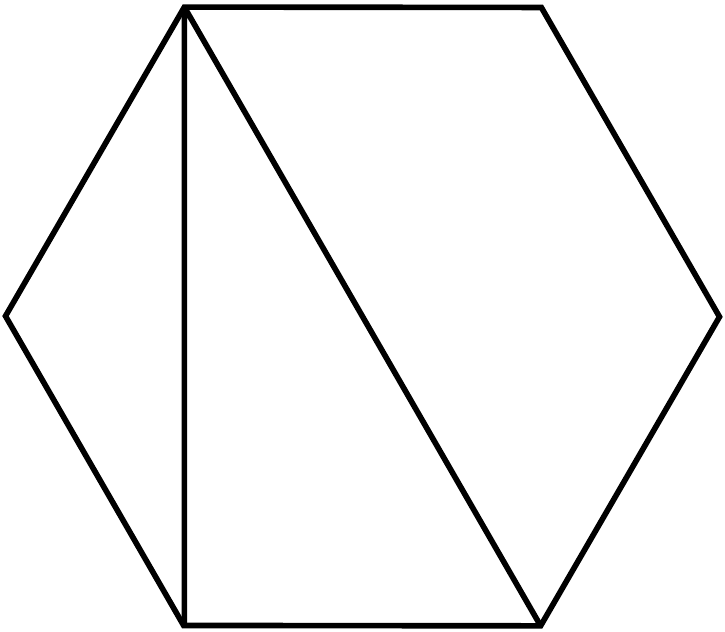}
					\put(6,-5){\scriptsize \begin{sideways}$0$\end{sideways}}
					\put(19,3){\scriptsize \begin{sideways}$1$\end{sideways}}
					\put(-6,2){\scriptsize \begin{sideways}$2$\end{sideways}}
					\put(19,14){\scriptsize \begin{sideways}$3$\end{sideways}}
					\put(6,21){\scriptsize \begin{sideways}$4$\end{sideways}}
					\put(-6,13){\scriptsize \begin{sideways}$5$\end{sideways}}
					\put(-10,39){\footnotesize \begin{sideways}$\begin{matrix} a=1 \\ b=3 \\ \gamma=2 \\ \Gamma=2 \end{matrix}$\end{sideways}}
				\end{overpic}
				}
			\put(60,5){
				\begin{overpic}[width=0.05\linewidth,angle=90]{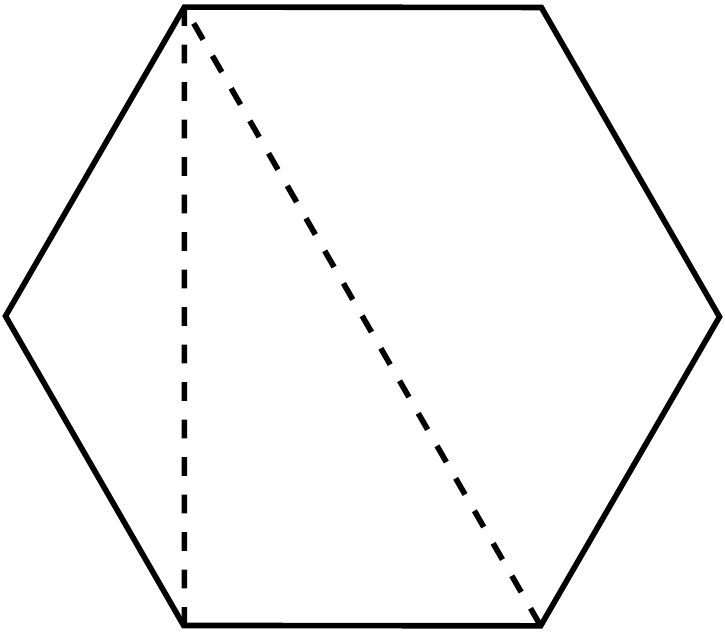}
					\put(6,-5){\scriptsize \begin{sideways}$0$\end{sideways}}
					\put(19,2){\scriptsize \begin{sideways}$1$\end{sideways}}
					\put(-6,3){\scriptsize \begin{sideways}$2$\end{sideways}}
					\put(19,14){\scriptsize \begin{sideways}$3$\end{sideways}}
					\put(6,21){\scriptsize \begin{sideways}$4$\end{sideways}}
					\put(-6,13){\scriptsize \begin{sideways}$5$\end{sideways}}
					\put(-10,30){\footnotesize \begin{sideways}$\begin{matrix} \\ 
															 \delta_2=\{1,2\} \\ 
															 \delta_3=\{2,3\} \\ 
												  
											  \end{matrix}$\end{sideways}}
				\end{overpic}
				}
			\put(95,20){\begin{sideways}$y_{\{2\}}=-1$\end{sideways}}
			\end{overpic}
			}
		\put(260,270){
			\begin{overpic}[width=\linewidth]{pics/6times8_white_rectangl}
			\put(00,20){\begin{sideways}$I=\{3\}$\end{sideways}}
			\put(20,5){
				\begin{overpic}[width=0.05\linewidth,angle=90]{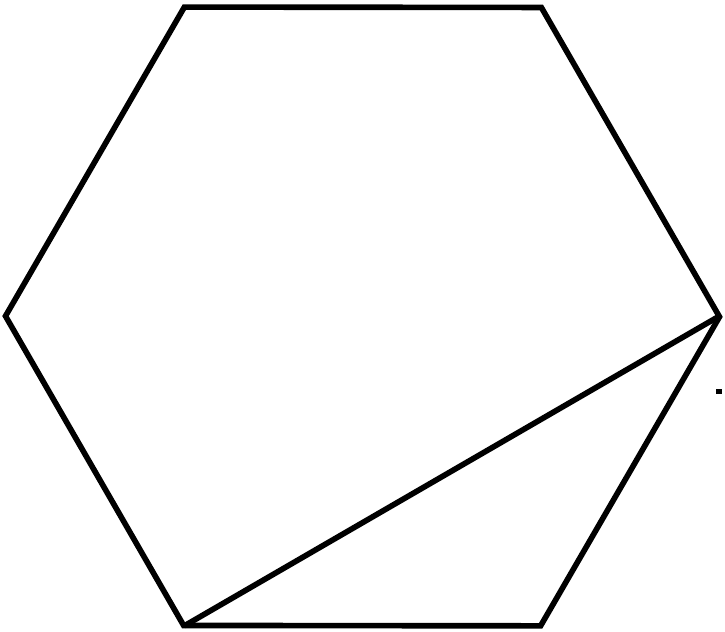}
					\put(6,-5){\scriptsize \begin{sideways}$0$\end{sideways}}
					\put(19,3){\scriptsize \begin{sideways}$1$\end{sideways}}
					\put(-6,2){\scriptsize \begin{sideways}$2$\end{sideways}}
					\put(19,14){\scriptsize \begin{sideways}$3$\end{sideways}}
					\put(6,21){\scriptsize \begin{sideways}$4$\end{sideways}}
					\put(-6,13){\scriptsize \begin{sideways}$5$\end{sideways}}
					\put(-10,39){\footnotesize \begin{sideways}$\begin{matrix} a=1 \\ b=4 \\ \gamma=3 \\ \Gamma=3 \end{matrix}$\end{sideways}}
				\end{overpic}
				}
			\put(60,5){
				\begin{overpic}[width=0.05\linewidth,angle=90]{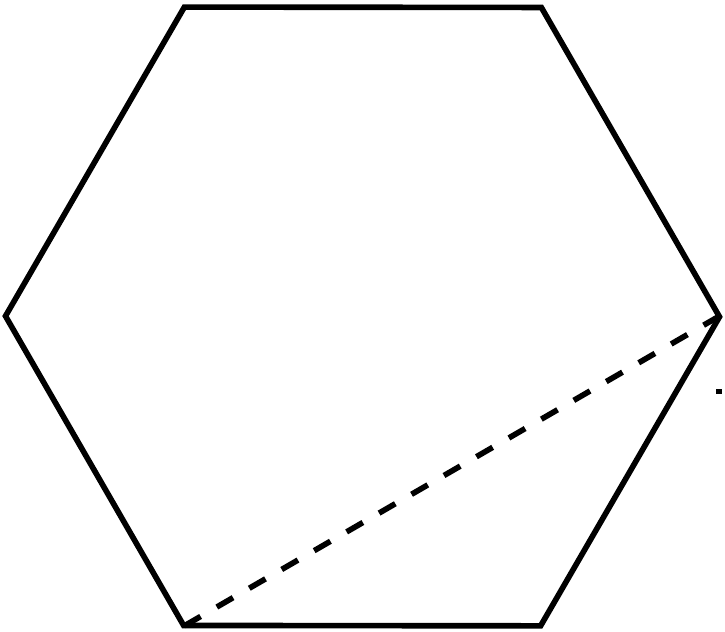}
					\put(6,-5){\scriptsize \begin{sideways}$0$\end{sideways}}
					\put(19,2){\scriptsize \begin{sideways}$1$\end{sideways}}
					\put(-6,3){\scriptsize \begin{sideways}$2$\end{sideways}}
					\put(19,14){\scriptsize \begin{sideways}$3$\end{sideways}}
					\put(6,21){\scriptsize \begin{sideways}$4$\end{sideways}}
					\put(-6,13){\scriptsize \begin{sideways}$5$\end{sideways}}
					\put(-10,30){\footnotesize \begin{sideways}$\begin{matrix} \delta_1=\{1,4\} \\ 
															  \\ 
															  \\ 
												  
											  \end{matrix}$\end{sideways}}
				\end{overpic}
				}
			\put(95,20){\begin{sideways}$y_{\{3\}}=1$\end{sideways}}
			\end{overpic}
			}
		\put(260,360){
			\begin{overpic}[width=\linewidth]{pics/6times8_white_rectangl}
			\put(00,20){\begin{sideways}$I=\{4\}$\end{sideways}}
			\put(20,5){
				\begin{overpic}[width=0.05\linewidth,angle=90]{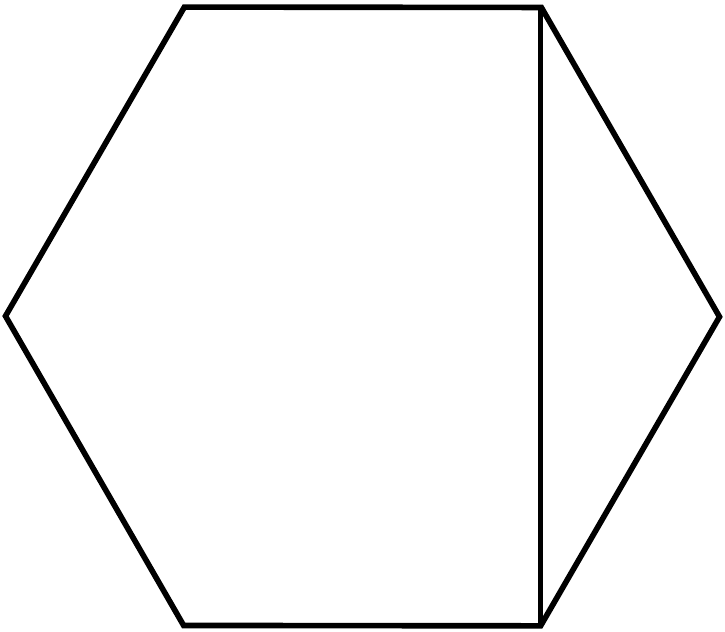}
					\put(6,-5){\scriptsize \begin{sideways}$0$\end{sideways}}
					\put(19,3){\scriptsize \begin{sideways}$1$\end{sideways}}
					\put(-6,2){\scriptsize \begin{sideways}$2$\end{sideways}}
					\put(19,14){\scriptsize \begin{sideways}$3$\end{sideways}}
					\put(6,21){\scriptsize \begin{sideways}$4$\end{sideways}}
					\put(-6,13){\scriptsize \begin{sideways}$5$\end{sideways}}
					\put(-10,39){\footnotesize \begin{sideways}$\begin{matrix} a=3 \\ b=5 \\ \gamma=4 \\ \Gamma=4 \end{matrix}$\end{sideways}}
				\end{overpic}
				}
			\put(60,5){
				\begin{overpic}[width=0.05\linewidth,angle=90]{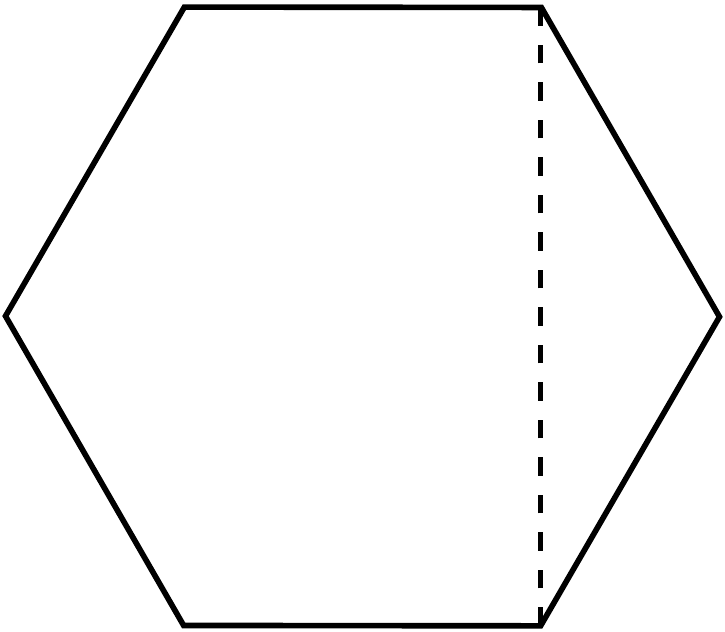}
					\put(6,-5){\scriptsize \begin{sideways}$0$\end{sideways}}
					\put(19,2){\scriptsize \begin{sideways}$1$\end{sideways}}
					\put(-6,3){\scriptsize \begin{sideways}$2$\end{sideways}}
					\put(19,14){\scriptsize \begin{sideways}$3$\end{sideways}}
					\put(6,21){\scriptsize \begin{sideways}$4$\end{sideways}}
					\put(-6,13){\scriptsize \begin{sideways}$5$\end{sideways}}
					\put(-10,30){\footnotesize \begin{sideways}$\begin{matrix} \delta_1=\{3,5\} \\ 
															  \\ 
															  \\ 
												  
											  \end{matrix}$\end{sideways}}
				\end{overpic}
				}
			\put(95,20){\begin{sideways}$y_{\{4\}}=1$\end{sideways}}
			\end{overpic}
			}
	     \end{overpic}
         \end{center}
         \caption[]{Details for the computation of the Minkowski coefficients~$y_I$ of $\As_{3}^c$ in case $\Do_c=\{1,3,4\}$ 
					and $\Up_c=\{2\}$. The first line states~$I$, the second line's first column pictures the non-crossing proper
					diagonals associated the up and down interval decomposition of~$I$, while the second column gives the values 
					for $a$, $b$, $\gamma$, and $\Gamma$ if there is only one nested component. The third line's first column 
					illustrates the proper diagonals of $\mathscr D_I\subseteq \{\delta_1, \delta_2, \delta_3, \delta_4\}$, the second 
					column specifies their end-points. We give the value for~$y_I$, where $y_{\{1,2,3,4\}}=-1$ since $|\Up_c|=1$ is omitted.}
         \label{fig:example_compute_y_I}
      \end{minipage}
      \end{center}
\end{figure}
\noindent
We prove Theorem~\ref{thm:first_y_I_thm} in Section~\ref{sec:proof_main_thm}. An example illustrating the theorem 
for the left associahedron~$\As^c_3$ of Figure~\ref{fig:a3_associahedra} ($\Do_c=\{1,3,4\}$ and $\Up_c=\{2\}$) is 
given in Figure~\ref{fig:example_compute_y_I} where we also explicitly compute the $y_I$-values for this realisation 
with $z^c_I=\frac{|I|(|I|+1)}{2}$ for the facet-defining inequalities. 

\medskip
For the rest of this section, we specialise to 
realisations with this specific choice of $z_I$-values. We obtain a nice combinatorial interpretation the coefficients~$y_I$ 
in Theorem~\ref{thm:second_y_I_thm} and characterise the vanishing~$y_I$-values in Corollary~\ref{cor:characterise_y_I=0}.

\medskip
If $I$ has a nested up and down interval decomposition, the \emph{signed lengths}~$K_\gamma$ and~$K_\Gamma$ of~$I$ are 
integers defined as follows. $|K_\Gamma|$ is the number of edges of the path in~$\partial Q$ connecting~$b$ and~$\Gamma$ 
that does not use the vertex labeled~$a$ and $K_\Gamma$ is negative if and only if $\Gamma \in \Do_c$. Similarly, 
$|K_\gamma|$ is the length of path in $\partial Q$ connecting~$a$ and~$\gamma$ not using label~$b$ and $K_\gamma$ is 
negative if and only if $\gamma \in \Do_c$. Equivalently, we have that~$K_\gamma$ (respectively~$K_\Gamma$) is a 
positive integer if and only if~$\gamma\in\Up_c$ (respectively~$\Gamma\in\Up_c$) and that~$K_\gamma=-1$ 
(respectively~$K_\Gamma=-1$) if and only if~$\gamma\in\Do_c$ (respectively~$\Gamma\in\Do_c$). We can now express the 
coefficients~$y_I$ of~$\As_{n-1}^c$ in terms of~$K_\gamma$ and~$K_\Gamma$. The following theorem is an easy 
consequence of Theorem~\ref{thm:first_y_I_thm}.
\begin{thm}$ $\\
	Let~$K_\Gamma$ and~$K_\gamma$ be the signed lengths of~$I$ as defined above if $I\subseteq [n]$ has a nested up and down interval 
	decomposition of type~$(1,k)$. Then	the Minkowski coefficient~$y_I$ of~$\As_{n-1}^c$ is 
	\[
	y_I = \begin{cases}
			(-1)^{|I\setminus (a,b)_{\Do}|}K_\gamma K_\Gamma			& \text{if $I\neq \{u_s\}\subseteq \Up_c$ and $v=1$,}\\
			(n+1) - K_\gamma K_\Gamma								 	& \text{if $I= \{u_s\}\subseteq \Up_c$,}\\
			0															& \text{if $v\geq 2$.}
		  \end{cases}
	\]\label{thm:second_y_I_thm}
\end{thm}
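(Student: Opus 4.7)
The plan is to derive Theorem~\ref{thm:second_y_I_thm} from Theorem~\ref{thm:first_y_I_thm} by substituting $z^c_S=\tfrac{|S|(|S|+1)}{2}$ into the four-term formula. The case $v\geq 2$ is immediate, so I focus on nested $I$ with down interval $(a,b)_{\Do_c}$, smallest element $\gamma$, and largest element $\Gamma$. Writing $r_i=|R_{\delta_i}|$ and $f(r)=r(r+1)/2$, the task reduces to evaluating $B:=f(r_1)-f(r_2)-f(r_3)+f(r_4)$ and showing that $B=K_\gamma K_\Gamma$ in general, with a correction of $-(n+1)$ when $I=\{u_s\}\subseteq\Up_c$.

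The first step is a second-difference identity for $f$: a direct expansion yields $B=(r_2-r_1)(r_3-r_1)$ whenever $r_1+r_4=r_2+r_3$. The second step is combinatorial. Using the description of $R_\delta$ from Section~\ref{sec_assoc_def} as the labels strictly on the right of the oriented $\delta$ (with the prescribed substitutions of $0$ and $n+1$), moving the $b$-endpoint of $\delta_1$ to $\Gamma$ sweeps the unique boundary path of $Q_c$ from $b$ to $\Gamma$ that avoids $a$, whose length equals $|K_\Gamma|$, and this path accounts for the change between $R_{\delta_1}$ and $R_{\delta_2}$. The sign of $r_2-r_1$ matches the convention that $K_\Gamma$ is negative precisely when $\Gamma\in\Do_c$. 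Applying the same reasoning to $\delta_3$ and $\delta_4$ gives $r_3-r_1=K_\gamma$, so that $(r_2-r_1)(r_3-r_1)=K_\gamma K_\Gamma$.

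To close the proof, I verify the balance $r_1+r_4=r_2+r_3$ and isolate its exception. A tally of the boundary arcs between the cyclically ordered marked vertices $a,b,\Gamma,\gamma$ on $Q_c$ gives this identity for the raw arc-intermediate counts; the conventions for non-proper $\delta_i$ (namely $R=\varnothing$ or $R=[n]$) preserve cardinality when $\delta_i$ has an $\Up_c$-endpoint and otherwise drop the unreplaceable $0,n+1$, and a case analysis over the $\Do_c/\Up_c$-memberships of $\gamma,\Gamma$ shows that these adjustments cancel across the four terms. The sole exception is $\gamma=\Gamma$, where $\delta_4$ is degenerate. For $\gamma=\Gamma\in\Do_c$ the convention gives $r_4=0$ while $r_2+r_3-r_1=-1$, but $f(0)=f(-1)=0$ absorbs this discrepancy. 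For $\gamma=\Gamma\in\Up_c$ the convention gives $r_4=n$ while $r_2+r_3-r_1=n+1$, so $B$ picks up $f(n)-f(n+1)=-(n+1)$, which is exactly the correction in the middle line of Theorem~\ref{thm:second_y_I_thm}. The main obstacle is the combinatorial step: the identifications $r_2-r_1=K_\Gamma$ and $r_3-r_1=K_\gamma$ and the balance identity both require careful checking of the substitution conventions against the various configurations in which one or more of $\delta_1,\delta_2,\delta_3$ is a polygon edge, but once this is done the algebraic identity and the singleton-up accounting immediately produce Theorem~\ref{thm:second_y_I_thm}.
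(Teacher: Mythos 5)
Your proof takes essentially the same route as the paper: the case $v\geq 2$ is disposed of by Theorem~\ref{thm:first_y_I_thm}, and for nested~$I$ one substitutes $z^c_S=\tfrac{|S|(|S|+1)}{2}$ into the four-term formula, identifies $K_\Gamma=|R_{\delta_2}|-|R_{\delta_1}|$ and $K_\gamma=|R_{\delta_3}|-|R_{\delta_1}|$, and uses the second-difference identity $f(r_1)-f(r_1+s)-f(r_1+t)+f(r_1+s+t)=st$ for $f(r)=r(r+1)/2$, with the shift $f(n)-f(n+1)=-(n+1)$ supplying the correction when $I=\{u_s\}$. One small point in your favour: the paper asserts $|R_{\delta_4}|=K+K_\gamma+K_\Gamma$ whenever $I\neq\{u_s\}$, which is literally false when $\gamma=\Gamma\in\Do_c$ (there $r_4=0$ while $K+K_\gamma+K_\Gamma=-1$); the conclusion nevertheless survives precisely because $f(-1)=f(0)=0$, and you make this absorption explicit whereas the paper leaves it implicit. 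Otherwise the two arguments coincide.
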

\begin{proof}
	By Theorem~\ref{thm:first_y_I_thm}, the claim is trivial if $I$ has up and down interval decomposition of type $v>1$. We 
	therefore assume $v=1$, set~$K:=|R_{\delta_1}|$, and observe $K_\Gamma:=|R_{\delta_2}|-|R_{\delta_1}|$ and 
	$K_\gamma:=|R_{\delta_3}|-|R_{\delta_1}|$. Thus\\[2mm]
	\centerline{$|R_{\delta_4}|=\begin{cases}
						K+K_\gamma+K_\Gamma     &\text{if $I \neq \{u_s\}$,}\\
						K+K_\gamma+K_\Gamma-1=n &\text{if $I = \{u_s\}$,}
					\end{cases}$}\\[2mm]
	as well as
	\begin{align*}
		z_{R_{\delta_1}}^c &= \frac{K(K+1)}{2},\\
		z_{R_{\delta_2}}^c &= \frac{(K+K_\Gamma)(K+K_\Gamma+1)}{2},\\
		z_{R_{\delta_3}}^c &= \frac{(K+K_\gamma)(K+K_\gamma+1)}{2}, and\\
		z_{R_{\delta_4}}^c &=
			\begin{cases}
				\frac{(K+K_\Gamma+K_\gamma)(K+K_\Gamma+K_\gamma+1)}{2}			& \text{if $I\neq\{ u_s\}$,}\\
				\frac{(K+K_\Gamma+K_\gamma)(K+K_\Gamma+K_\gamma+1)}{2}-(n+1)	& \text{if $I=\{u_s\}$.}
			\end{cases}
	\end{align*}
	A direct computation shows
	\[
		z_{R_{\delta_1}}^c - z_{R_{\delta_2}}^c - z_{R_{\delta_3}}^c + \frac{(K+K_\Gamma+K_\gamma)(K+K_\Gamma+K_\gamma+1)}{2}
		= K_\Gamma K_\gamma. 
	\]
 	The claim is now an immediate consequence of Theorem~\ref{thm:first_y_I_thm}.
\end{proof}

\begin{cor}\label{cor:sign_full_set}
	For $n\geq 2$ and any choice $\Do_c\sqcup\Up_c$, we have $y_{[n]}=(-1)^{|\Up_c|}$
\end{cor}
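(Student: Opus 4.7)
The plan is to derive this as a direct application of Theorem~\ref{thm:second_y_I_thm} to the set $I = [n]$.

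First, I would verify that $[n]$ has a nested up and down interval decomposition of type $(1,k)$. By definition, the inclusion-maximal down interval of $[n]$ is $(0,n+1)_{\Do_c} = \Do_c$ itself, since $d_1=1$ and $d_\ell=n$ force this to be the single maximal down interval. Every up interval of $\Up_c$ is then automatically contained in the open real interval $(0,n+1)$, so there is exactly one nested component, giving $v=1$ as required.

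Next, I would identify the data feeding the formula. The smallest and largest elements of $I=[n]$ are $\gamma=1$ and $\Gamma=n$, and both lie in $\Do_c$ because $d_1=1$ and $d_\ell=n$. Hence, by the definition of the signed lengths immediately preceding Theorem~\ref{thm:second_y_I_thm}, $K_\gamma=-1$ and $K_\Gamma=-1$. Since $n\ge 2$, the set $I=[n]$ is not a singleton $\{u_s\}$, so we are in the first (generic) case of the theorem.

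Finally, I would apply the formula. We have $(a,b)_{\Do_c}=\Do_c$, so $I\setminus(a,b)_{\Do_c}=[n]\setminus\Do_c=\Up_c$. Therefore
\[
  y_{[n]} \;=\; (-1)^{|[n]\setminus\Do_c|}\,K_\gamma K_\Gamma \;=\; (-1)^{|\Up_c|}\,(-1)(-1) \;=\; (-1)^{|\Up_c|},
\]
which is the claim. There is no serious obstacle here; the only subtlety worth stating carefully is that $1,n\in\Do_c$ forces both signed lengths to equal $-1$ and forces the decomposition to be nested with $v=1$, so that Theorem~\ref{thm:second_y_I_thm} applies in its first case.
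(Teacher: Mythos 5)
Your argument is correct and is essentially one of the two routes the paper's own proof explicitly sketches: it invokes Theorem~\ref{thm:second_y_I_thm} together with the observations $a=0$, $b=n+1$, $\gamma=1$, $\Gamma=n$ (hence $K_\gamma=K_\Gamma=-1$) and $[n]\setminus(a,b)_{\Do_c}=\Up_c$. The paper also offers an alternative via Theorem~\ref{thm:first_y_I_thm}, but your choice matches the paper's second route exactly.
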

\begin{proof}
	The claim follows directly either from Theorem~\ref{thm:first_y_I_thm} or from Theorem~\ref{thm:second_y_I_thm}. 
	To obtain the claim from Theorem~\ref{thm:first_y_I_thm}, observe that $[n]\setminus R_{\delta_1}=\Up_c$ and 
	$z_{R_{\delta_1}}^c-z_{R_{\delta_2}}^c-z_{R_{\delta_3}}^c+z_{R_{\delta_4}}^c=1$. To obtain the claim from 
	Theorem~\ref{thm:second_y_I_thm}, we remark that $[n] \setminus R_{\delta_1} = I \setminus (a,b)_{\Do}$ and
	$K_\gamma =K_\Gamma=-1$ since $a=0$, $b=n+1$, $\gamma=1$, and $\Gamma=n$.
\end{proof}
\begin{cor}\label{cor:characterise_y_I=0}
	Let~$n \geq 2$ and $\Do_c\sqcup\Up_c$ be a partition induced by some Coxeter element~$c$.
	Then $y_I=0$ if and only if $I$ has an up and down decomposition of type $(v_I,w_I)$ with $v_I>1$ or $n=3$ and $I=\Up_c=\{2\}$.
\end{cor}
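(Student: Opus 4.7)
The plan is to apply Theorem~\ref{thm:second_y_I_thm} directly, splitting according to the type $(v_I,w_I)$ of the up and down interval decomposition of~$I$. The case $v_I\geq 2$ is immediate from the theorem and yields $y_I=0$, contributing the first alternative of the claimed characterisation. For $v_I=1$ I would exploit the explicit statement, recorded just before Theorem~\ref{thm:second_y_I_thm}, that each of~$K_\gamma$ and~$K_\Gamma$ lies in $\{-1\}\cup\{1,2,3,\ldots\}$; in particular, both are non-zero.

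In the generic subcase $v_I=1$ with $I\neq\{u_s\}$, the formula $y_I=\pm K_\gamma K_\Gamma$ together with $K_\gamma,K_\Gamma\neq 0$ immediately forces $y_I\neq 0$, so no further work is required here.

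The delicate subcase, and where I expect the main work, is $I=\{u_s\}\subseteq\Up_c$, for which $y_I=\pm(K_\gamma K_\Gamma-(n+1))$. The task is to determine when $K_\gamma K_\Gamma=n+1$. I would first unpack the up and down interval decomposition of the singleton~$\{u_s\}$: since $I$ contains no element of~$\Do_c$, the enclosing down interval is the \emph{empty} down interval $(d_r,d_{r+1})_{\Do_c}$ with $d_r<u_s<d_{r+1}$, so $a=d_r$, $b=d_{r+1}$ and $\gamma=\Gamma=u_s$. Tracing the two boundary paths in~$\partial Q_c$ (one through the corner~$0$, the other through~$n+1$) should then give $K_\gamma=r+s$ and $K_\Gamma=(\ell-r)+(m-s)+1$, both of which are at least $2$ because $1\leq r\leq \ell-1$ and $1\leq s\leq m$, and which satisfy $K_\gamma+K_\Gamma=\ell+m+1=n+1$.

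The conclusion then reduces to a small Diophantine observation: under $K_\gamma+K_\Gamma=n+1$, the equation $K_\gamma K_\Gamma=n+1$ is equivalent to $(K_\gamma-1)(K_\Gamma-1)=1$, which together with $K_\gamma,K_\Gamma\geq 2$ forces $K_\gamma=K_\Gamma=2$. Translating back one then reads off $n=3$ and $r=s=1$, hence $\Do_c=\{1,3\}$, $\Up_c=\{2\}$, and $I=\{2\}$, finishing the characterisation. The main obstacle I anticipate is correctly handling the empty-down-interval subtlety of Definition~\ref{defn:up_down_decomp} for a singleton $\{u_s\}$ and verifying the path-length identity $K_\gamma+K_\Gamma=n+1$; once these are in place the remaining arithmetic is elementary.
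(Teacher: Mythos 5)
Your proof is correct and follows the same overall skeleton as the paper's: reduce everything via Theorem~\ref{thm:second_y_I_thm} to the singleton case $I=\{u_s\}\subseteq\Up_c$, establish $K_\gamma+K_\Gamma=n+1$, and then determine when $K_\gamma K_\Gamma=n+1$. Within that skeleton you make two genuinely different local choices that are worth noting. First, you obtain the identity $K_\gamma+K_\Gamma=n+1$ by explicit boundary-path counting in $Q_c$ (arriving at $K_\gamma=r+s$ and $K_\Gamma=(\ell-r)+(m-s)+1$), whereas the paper derives it set-theoretically from $R_{\delta_2}\cup R_{\delta_3}=[n]$ and $R_{\delta_2}\cap R_{\delta_3}=\{u_s\}$; both are valid, and the paper's route avoids having to unwind the definition of the signed lengths. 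Second, and more substantially, you finish with the factoring identity $(K_\gamma-1)(K_\Gamma-1)=K_\gamma K_\Gamma-(K_\gamma+K_\Gamma)+1=1$, which together with $K_\gamma,K_\Gamma\geq 1$ immediately forces $K_\gamma=K_\Gamma=2$ and $n=3$. The paper instead solves the quadratic $K_\Gamma^2-(n+1)K_\Gamma+(n+1)=0$ and argues that the discriminant $(n+1)(n-3)$ must be a perfect square, which requires an extra (and somewhat delicate) number-theoretic step. Your factoring is cleaner and in fact only needs positivity of $K_\gamma,K_\Gamma$, so the bound $\geq 2$ you supply is a harmless overkill. One small remark: you should note (as the paper does) that for $n=2$ the subcase $I=\{u_s\}\subseteq\Up_c$ is vacuous since $\Up_c=\varnothing$; this is implicit in your setup but deserves a sentence for completeness.
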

\begin{proof}
	Since~$K_\gamma$ and~$K_\Gamma$ are non-zero, Theorem~\ref{thm:second_y_I_thm} implies that $y_I\neq 0$ if $I\neq\{u_s\}\subseteq \Up_c$. 
	So we assume that $I=\{u_s\}\subseteq \Up_c$. It now suffices to prove that $y_I= 0$ if and only if $n=3$. 
	
	If $n=2$ then $I=\{u_s\}\subseteq \Up_c$ is impossible, so we have $n\geq 3$. From $R_{\delta_2}\cup R_{\delta_3}=[n]$ 
	and $R_{\delta_2}\cap R_{\delta_3}=\{ u_s\}$ we conclude $K_\gamma +K_\Gamma = n+1$. On the other hand, Theorem~\ref{thm:second_y_I_thm}
	implies that $y_I=0$ is equivalent to $K_\Gamma K_\gamma=n+1$. By substitution we have\\[2mm]
	\centerline{$K_{\Gamma}^2 -(n+1)K_{\Gamma}+(n+1)=0$} 
	and solving for $K_\Gamma$ gives \\[2mm]
	\centerline{
		$K_{\Gamma,1/2}=-\frac{-(n+1)}{2}\pm\sqrt{\frac{(n+1)^2}{4}-(n+1)} = \frac{(n+1)\pm\sqrt{n^2-2n-3}}{2}$.
	}\\[2mm]
	Since~$K_\Gamma$ is a positive integer, we conclude that $\sqrt{n^2-2n-3}$ is a positive integer. In particular,
	$n^2-2n-3=(n+1)(n-3)$ must be a square. For $n=3$, we conclude~$K_\Gamma=2$, that is $I=\Up_c=\{2\}$. For $n>3$ 
	we derive the contradiction $(n+1)=r^2(n-3)$ or $(n-3)=r^2(n-1)$ for some positive integer~$r$. 
\end{proof}

\medskip
We now illustrate Theorem~\ref{thm:second_y_I_thm} by recomputing the $y_I$-values for~$\As^{c_1}_2$ and~$\As^{c_2}_2$
mentioned in the introduction. For $n=3$, there are two possible partitions of~$\{1,2,3\}$ that correspond \
to the two Coxeter elements of~$\Sigma_3$: either $\Do_{c_1}=\{1,2,3\}$ and $\Up_{c_1}=\varnothing$ or $\Do_{c_2}=\{1,3\}$ 
and $\Up_{c_2}=\{2\}$. 

\begin{expl} \label{expl:As^c1_2}
	Consider $\Do_{c_1}=\{1,2,3\}$ and $\Up_{c_1}=\varnothing$ which yields Loday's realisation. 
\begin{compactenum}[(1)] 
	\item We have $y_I=1$ for~$I=\{ i\}$ and $1\leq i\leq 3$.\\
		  The up and down interval decomposition of $\{ i\}$ is $(i-1,i+1)_{\Do}$ and $\gamma=\Gamma=i$.
		  It follows that $K_\gamma=K_\Gamma=-1$ and $I\setminus (a,b)_{\Do}=\varnothing$. Thus $y_I=1$.
	\item We have $y_I=1$ for~$I=\{ i,i+1\}$ and $1\leq i\leq 2$.\\
		  Then $I=(i-1,i+2)_{\Do}$, $\gamma=i$, and $\Gamma=i+1$. It follows that
		  $K_\gamma=K_\Gamma=-1$ and $I\setminus (a,b)_{\Do}=\varnothing$. Thus $y_I=1$.
	\item We have $y_I=0$ for~$I=\{1,3\}$.\\
		  Then $I=(0,2)_{\Do}\sqcup(2,4)_{\Do}$, so $I$ is of type~$(2,0)$ and~$y_I=0$ by
		  Corollary~\ref{cor:characterise_y_I=0}.
	\item We have $y_I=1$ for~$I=\{1,2,3\}$.\\
		  Then $I=(0,4)_{\Do}$, $\gamma=1$ and $\Gamma=3$ implies $K_\gamma=K_\Gamma=-1$ and 
		  $I\setminus (a,b)_{\Do}=\varnothing$. Thus $y_I=1$. Of course, we could also use 
		  Corollary~\ref{cor:sign_full_set} instead.
\end{compactenum}
Altogether we have~$y_I\in\{0,1\}$ and~$\As^{c_1}_2$ is a Minkowski sum of faces of the standard simplex:
\[
	\As_2^{c_1} = 1\cdot \Delta_{\{1\}} + 1\cdot \Delta_{\{2\}} + 1\cdot \Delta_{\{3\}} 
				+ 1\cdot \Delta_{\{1,2\}} + 0\cdot \Delta_{\{1,3\}} + 1\cdot \Delta_{\{2,3\}} 
				+ 1\cdot \Delta_{\{1,2,3\}},
\]
recall Figure~\ref{fig:minkowski_decomp_As_c_1} for a visualisation of this equation of polytopes. 
\end{expl}

\begin{expl} \label{expl:As^c2_2}
	Consider $\Do_{c_2}=\{1,3\}$ and $\Up_{c_2}=\{2\}$. The associahedron~$\As_{2}^{c_2}$ is isometric 
	to~$\As_{2}^{c_1}$,~\cite{BergeronHohlwegLangeThomas}, but it is not the Minkowski sum of faces of 
	a standard simplex as we show now.
\begin{compactenum}[(1)] 
	\item We have $y_I=1$ for~$I=\{1\}$ and~$I=\{3\}$.\\
		  The up and down interval decomposition is $(0,3)_{\Do}$ and $(1,4)_{\Do}$ respectively. Therefore we 
		  have $\gamma=\Gamma=1$ and $\gamma=\Gamma=3$ respectively. It follows $K_\gamma=K_\Gamma=-1$ 
		  and~$I\setminus (a,b)_{\Do}=\varnothing$.
	\item We have $y_I=0$ for~$I=\{2\}$.\\
		  The up and down interval decomposition is $(1,3)_{\Do} \sqcup [2,2]_{\Up}$, so $I$ is of type~$(1,1)$. 
		  We have $\gamma=\Gamma=2$ which implies $K_\gamma=K_\Gamma=2$. Since 
		  $n=3$, we conclude $y_I=(3+1)-2\cdot 2=0$. Of course, we could have used Corollary~\ref{cor:characterise_y_I=0}.
	\item We have $y_I=2$ for~$I=\{i,i+1\}$ and $1\leq i\leq 2$.\\
		  Then $I=(i-1,i+2)_{\Do}$, $\gamma=i$, and $\Gamma=i+1$, that is, $K_\gamma=-1$, $K_\Gamma=2$. Moreover, 
		  $I\setminus (a,b)_{\Do}=\{2\}$ if $I=\{1,2\}$ and $K_\gamma=2$, $K_\Gamma=-1$, and 
		  $I\setminus (a,b)_{\Do}=\{2\}$ if $I=\{2,3\}$.
	\item We have $y_I=1$ for~$I=\{1,3\}$.\\
		  Then $I=(0,4)_{\Do}$, $\gamma=1$, and $\Gamma=3$. It follows that $K_\gamma=K_\Gamma=-1$ and 
		  $I\setminus (a,b)_{\Do}=\varnothing$. 
	\item We have $y_I=-1$ for~$I=\{ 1,2,3\}$.\\
		  Then $I=(0,4)_{\Do} \sqcup [2,2]_{\Up}$ with $\gamma=1$ and $\Gamma=3$. It follows that $K_\gamma=K_\Gamma=-1$ and
		  $I\setminus (a,b)_{\Do}=\{ 2\}$. Again, we could have used Corollary~\ref{cor:sign_full_set} instead.
\end{compactenum}
Thus, we obtain the following Minkowski decomposition into dilated faces of the standard simplex:
\[
	\As_2^{c_2} = 1\cdot \Delta_{\{1\}} + 0\cdot \Delta_{\{2\}} + 1\cdot \Delta_{\{3\}} 
				+ 2\cdot \Delta_{\{1,2\}} + 1\cdot \Delta_{\{1,3\}} + 2\cdot \Delta_{\{2,3\}} 
				+ (-1)\cdot \Delta_{\{1,2,3\}},
\]
recall that an illustration of this decomposition is given in Figure~\ref{fig:minkowski_decomp_As_c_2}.
\end{expl}

%%%%%%%%%%%%%%%%%%%%%%%%%%%%%%%%%%%%%%%%%%%%%%%%%%%%%%%%%%%%%%%%%%%%
\section{A remark on cyclohedra} \label{sec_cyclohedron}
%%%%%%%%%%%%%%%%%%%%%%%%%%%%%%%%%%%%%%%%%%%%%%%%%%%%%%%%%%%%%%%%%%%%

We now show that Proposition~\ref{prop:ardila} does not hold if we consider a polytope obtained by 
`moving some inequalities of the permutahedron past vertices'. The example is a cyclohedron which 
is an associahedron associated to a Coxeter group of type~$B$. A Minkowski decomposition of `generalised 
permutahedra of type~$B$' (similar to Proposition~\ref{prop:ardila} for generalised permutahedra) 
is not known. 

The canonical embedding of the hyperoctahedral group~$W_n$ in the symmetric group~$S_{2n}$ induces 
realisations~$\Cy_n^c$ of cyclohedra (also known as Bott-Taubes polytopes or type~$B$ generalised 
associahedra,~\cite{BottTaubes,ChapotonFominZelevinsky,Simion}) using realisations~$\As^c_{2n-1}$ 
for certain {\em symmetric} choices~$c$. To obtain realisations of cyclohedra, we follow~\cite{HohlwegLange} 
and intersect~$\As_{2n-1}^c$ with `type~$B$ hyperplanes' $x_i+x_{2n+1-i}=2n-1$ for $1\leq i < n$.
A 2-dimensional cyclohedron~$\Cy_{2}^c$ obtained from~$\As_{3}^c$ (with up set $\Up_c=\{2\}$) by intersection 
with $x_1+x_4=5$ is shown in Figure~\ref{fig:b2_cyclohedra} (the hyperplane $x_2+x_3=5$ is implicitly used since
$\As_{3}^c$ is contained in $x_1+x_2+x_3+x_4=10$). A similar construction does not yield a cyclohedron 
if one starts with the other associahedron of Figure~\ref{fig:a3_associahedra} where $\Up_c=\{2,3\}$.
The tight right-hand sides of this realisation of the cyclohedron are obviously the tight right-hand 
sides of~$\As_{3}^c$ except $z^c_{\{1,4\}}=z^c_{\{2,3\}}=5$. The inequalities $x_1+x_4\geq 2$ and $x_2+x_3\geq 2$ 
are redundant for $\As_{3}^c$ and altering the level sets for these inequalities from~$2$ (for~$\As_{3}^c$) 
to~$5$ (for~$\Cy_2^c$) means that we move past the four vertices~$A$,~$B$,~$C$, and~$D$, so the realisation 
of the cyclohedron is not in the deformation cone of the classical permutahedron. We now show by example that
Proposition~\ref{prop:ardila} does not hold in this situation. To this respect, we list the function $z_I$ of tight
right hand-sides for all inequalities of the permutahedron (that is, facet-defining or not for the cyclohedron)
and its M\"obius inverse $y_I$, both defined on the boolean lattice: \\[2mm]

	\centerline{              $\begin{matrix} z_{\{1,2,3,4\}}=10\\ y_{\{1,2,3,4\}}=5 \end{matrix}$}$ $\\[2mm]
	\centerline{              $\begin{matrix} z_{\{1,2,3\}}=6\\ y_{\{1,2,3\}}=-4 \end{matrix}$
		        \rule{1cm}{0cm} $\begin{matrix} z_{\{1,2,4\}}=4\\ y_{\{1,2,4\}}=-3 \end{matrix}$
			    \rule{1cm}{0cm} $\begin{matrix} z_{\{1,3,4\}}=6\\ y_{\{1,3,4\}}=-2 \end{matrix}$
				\rule{1cm}{0cm} $\begin{matrix} z_{\{2,3,4\}}=6\\ y_{\{2,3,4\}}=-1 \end{matrix}$}$ $\\[4mm]
	\centerline{                $\begin{matrix} z_{\{1,2\}}=3\\ y_{\{1,2\}}=3 \end{matrix}$
		        \rule{1cm}{0cm} $\begin{matrix} z_{\{1,3\}}=3\\ y_{\{1,3\}}=1 \end{matrix}$
			    \rule{1cm}{0cm} $\begin{matrix} z_{\{1,4\}}=5\\ y_{\{1,4\}}=3 \end{matrix}$
				\rule{1cm}{0cm} $\begin{matrix} z_{\{2,3\}}=5\\ y_{\{2,3\}}=5 \end{matrix}$
				\rule{1cm}{0cm} $\begin{matrix} z_{\{2,4\}}=0\\ y_{\{2,4\}}=0 \end{matrix}$
				\rule{1cm}{0cm} $\begin{matrix} z_{\{3,4\}}=3\\ y_{\{3,4\}}=1 \end{matrix}$}$ $\\[4mm]
	\centerline{                $\begin{matrix} z_{\{1\}}=1  \\ y_{\{1\}}=1 \end{matrix}$
		        \rule{1cm}{0cm} $\begin{matrix} z_{\{2\}}=-1 \\ y_{\{2\}}=-1 \end{matrix}$
			    \rule{1cm}{0cm} $\begin{matrix} z_{\{3\}}=1  \\ y_{\{3\}}=1  \end{matrix}$
			    \rule{1cm}{0cm} $\begin{matrix} z_{\{4\}}=1  \\ y_{\{4\}}=1  \end{matrix}$.}$ $\\[4mm]
In other words, if Proposition~\ref{prop:ardila} were true for `generalised permutahedra not in the deformation cone
\begin{figure}
	\vspace{-0.5cm}
    \begin{center}
      \begin{minipage}{0.95\linewidth}
         \begin{center}
         	\begin{overpic}
            	[width=0.6\linewidth]{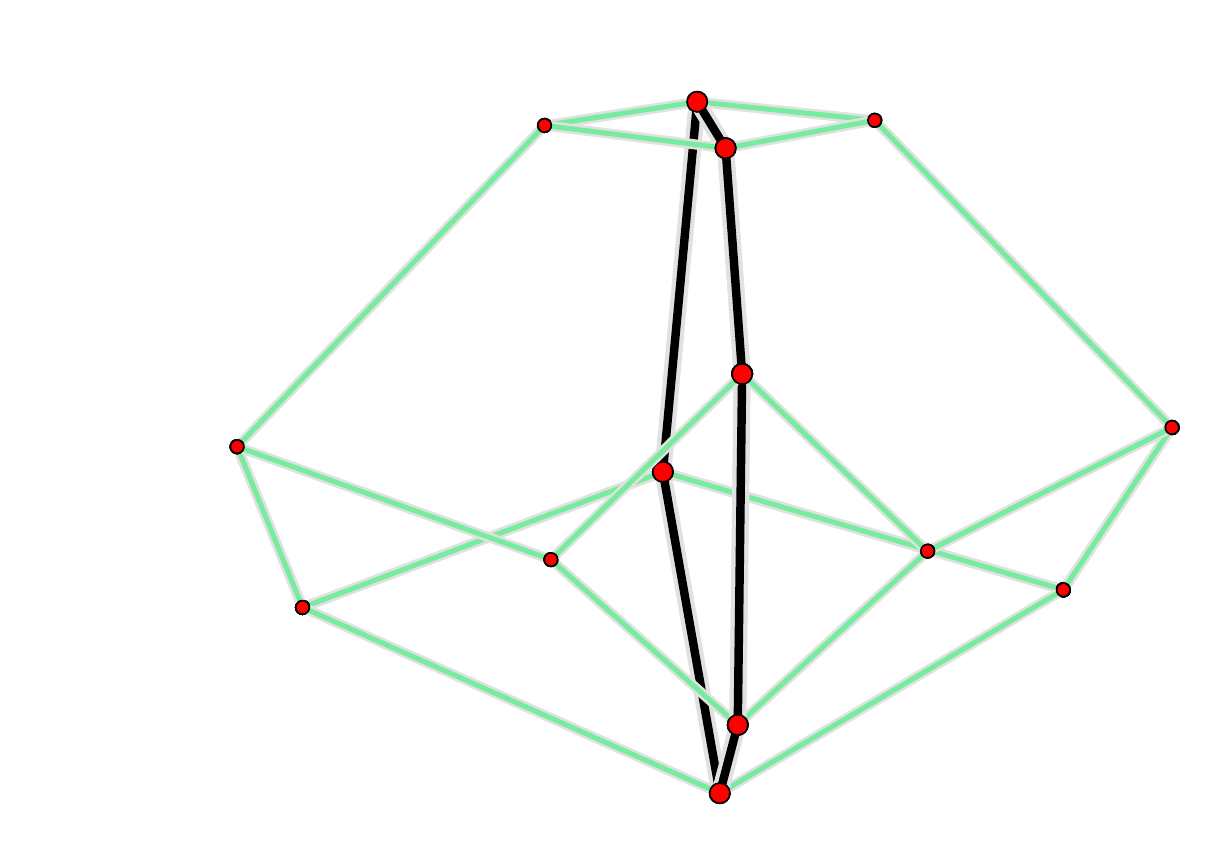}
            	\put(95,143){$A$}
            	\put(176,143){$B$}
            	\put(96,50){$C$}
            	\put(180,50){$D$}
         	\end{overpic}
        \end{center}
		\vspace{-0.5cm}
		\caption[]{A $2$-dimensional cyclohedron~$\Cy_2$ (black) obtained from~$\As^c_3$.}
    	\label{fig:b2_cyclohedra}
    \end{minipage}
   \end{center}
\end{figure}
of the classical permutahedron', then the following equation of polytopes has to hold:
\begin{align*}
	\Cy_{2}^c +&\left(\Delta_2+4\Delta_{123}+3\Delta_{124}+2\Delta_{134}+\Delta_{234}\right)\\
	  		   &\qquad= \Delta_1+\Delta_3+\Delta_4+3\Delta_{12}+\Delta_{13}+3\Delta_{14}+5\Delta_{23}+\Delta_{34}+5\Delta_{1234}.
\end{align*}
One way to see that this equation does not hold is to compute the number of vertices of the polytope on the left-hand 
side ($27$ vertices) and on the right-hand side ($20$ vertices) using for example {\tt polymake},~\cite{polymake}.

%%%%%%%%%%%%%%%%%%%%%%%%%%%%%%%%%%%%%%%%%%%%%%%%%%%%%%%%%%%%%%%%%%%%
\section{A proof of Theorem~\ref{thm:first_y_I_thm}} \label{sec:proof_main_thm}
%%%%%%%%%%%%%%%%%%%%%%%%%%%%%%%%%%%%%%%%%%%%%%%%%%%%%%%%%%%%%%%%%%%%
This section is devoted to the proof of Theorem~\ref{thm:first_y_I_thm} under the assumption that 
Lemma~\ref{lem:possible_D_I} holds; Lemma~\ref{lem:possible_D_I} is proved in Section~\ref{sec_characterisation_of_possible_D_I}. The strategy to prove Theorem~\ref{thm:first_y_I_thm} 
is as follows.

First, we prove Proposition~\ref{prop:y_I_for_non-degenerate_D_I} which weakens Theorem~\ref{thm:first_y_I_thm} 
in two senses: we restrict to $I\subset[n]$ with a nested decomposition and we restrict to the situation 
where $\mathscr D_I=\{\delta_1,\delta_2,\delta_3,\delta_4\}$, that is, where all four diagonals $\delta_i$ are 
proper. That the statement of Proposition~\ref{prop:y_I_for_non-degenerate_D_I} is actually the statement of 
Theorem~\ref{thm:first_y_I_thm} weakened by these additional assumptions follows from Corollary~\ref{cor:the_case_v=1}.

Lemma~\ref{lem:possible_D_I} states precisely which subsets of~$\{\delta_1,\delta_2,\delta_3,\delta_4\}$ are 
sets~$\mathscr D_I$ for some~$I\subset [n]$ with a nested up and down interval decomposition. Lemma~\ref{lem:other_special_D_I}
then expresses the Minkowski coefficients $y_I$ using these sets~$\mathscr D_I$ if~$I\subset [n]$ has a nested 
up and down interval decomposition and $|\mathscr D_I|<4$. 
Lemma~\ref{lem:which_R_delta_i} and Lemma~\ref{lem_relate_signs} then
imply the claim of Theorem~\ref{thm:first_y_I_thm} when $I\subset [n]$ has a nested decomposition 
and not all $\delta_i$ are proper. Finally, Lemma~\ref{lem_y_I_for_multiple_components} covers the cases $I\subset [n]$ 
where $I$ does not have a nested decomposition and Lemma~\ref{lem:y_I_for_[n]} settles~$I=[n]$.

\medskip
\noindent
It will be convenient to rewrite Equation~(\ref{horrible_equation}) that was obtained at the beginning of Section~\ref{sec:main_results_and_exapmples} by combination of Proposition~\ref{prop:ardila} and Proposition~\ref{prop_tight_values_for_z_I}:
\begin{align*}
	y_I &= \sum_{J\subseteq I}(-1)^{|I\setminus J|}
			\sum_{i\in[v_J]}\left (  
								\sum_{j\in [w_i+1]} \tilde z_{R_{\delta^J_{i,j}}}^c - w_i \tilde z^c_{[n]}
							\right)\\
		&= \sum_{J\subseteq I}(-1)^{|I\setminus J|}
				\sum_{i\in[v_J]}\left ( \tilde z_{R_{\delta^J_{i,m^J_i}}}^c 
										+ \sum_{j\in [m^J_i-1]} 
													\bigl(\tilde z_{R_{\delta^J_{i,j}}}^c - \tilde z^c_{[n]}\bigr)
								\right)
\end{align*}
where $m^J_i$ is either $w^J_i$ or $w^J_i+1$ in order to simplify the involved sum.

\medskip
Suppose now that the proper diagonal $\delta$ occurs in the right-hand side of this rewritten formula for $y_I$, that
is, $\delta$ is one of the associated diagonals $\delta^J_{i,j}$ for some $J\subseteq I$. We now distinguish whether~$\delta$ occurs as a single summand $\tilde z_{R_{\delta^J_{i,m^J_i}}}^c$ or as a compound summand
$(\tilde z_{R_{\delta^J_{i,j}}}^c - \tilde z^c_{[n]})$.

\noindent
We now make the following definition.
\begin{defn}
	Let $I\subset [n]$ be non-empty. 
	\begin{compactenum}
		\item A proper diagonal $\delta$ (associated to~$J\subseteq I$) is of type~$\tilde z_{R_\delta}^c$ (in 
			the expression for~$y_I$), if there exist an index~$i\in [v_J]$ such that $\delta=\delta^J_{i,m^J_i}$. 
		\item A proper diagonal $\delta$ (associated to~$J\subseteq I$) is of type 
			$\bigl(\tilde z_{R_\delta}^c - \tilde z^c_{[n]}\bigr)$ (in the expression for~$y_I$), if there exist
			indices~$i\in [v_J]$ and~$j\in [m^J_i-1]$ such that $\delta=\delta^J_{i,j}$
	\end{compactenum}
	
\end{defn}

A geometric interpretation of these notions is the following.
The proper diagonal~$\delta$ (associated to~$J\subseteq I$) is of type~$\tilde z_{R_\delta}^c$ (in the expression for~$y_I$), if $\delta$ is the `rightmost' proper diagonal associated to a nested component of $J$.  
Similarly, the proper diagonal~$\delta$ (associated to~$J\subseteq I$) is of type 
$\bigl(\tilde z_{R_\delta}^c - \tilde z^c_{[n]}\bigr)$ (in the expression for~$y_I$), if $\delta$ is a proper diagonal 
associated to a nested component of $J$, but it is not the rightmost one. 

\begin{prop}\label{prop:y_I_for_non-degenerate_D_I}$ $\\
	Let $I$ be a non-empty proper subset of~$[n]$ with up and down interval decomposition of type $(1,w)$ 
	and $\mathscr D_I=\{\delta_1,\delta_2,\delta_3,\delta_4\}$.
	Then the Minkowski coefficient $y_I$ of an~$P(\{\tilde z^c_I\})$ with the normal fan of~$\As^c_{n-1}$ is given by
	\[
		y_I = \sum_{\delta\in\mathscr D_I}(-1)^{|I\setminus R_{\delta}|}\tilde z_{R_{\delta}}^c.
	\]
\end{prop}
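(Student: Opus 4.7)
The plan is to start from equation~(\ref{horrible_equation})---which substitutes Proposition~\ref{prop_tight_values_for_z_I} into Proposition~\ref{prop:ardila}---and swap the order of summation so that the outer sum runs over proper diagonals~$\delta$ of~$Q_c$ while the inner sum runs over those $J\subseteq I$ whose up and down interval decomposition produces~$\delta$ as one of its associated diagonals $\delta^J_{i,j}$. After this rearrangement, $y_I$ takes the shape
\[
y_I \;=\; \sum_{\delta}\tilde z^c_{R_\delta}\cdot A(I,\delta)\;-\;\tilde z^c_{[n]}\cdot B(I),
\]
where $A(I,\delta)$ is a signed count of subsets $J\subseteq I$ that contribute~$\delta$ and $B(I)$ collects the ``middle diagonal'' contributions carrying the extra $-\tilde z^c_{[n]}$ term. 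It then suffices to show that $A(I,\delta)=0$ whenever $\delta\notin\mathscr D_I$, that $A(I,\delta_i)=(-1)^{|I\setminus R_{\delta_i}|}$ for each $\delta_i\in\mathscr D_I$, and that $B(I)=0$.

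The first technical step is a combinatorial description, for each candidate proper diagonal $\delta=\{x,y\}$, of the set $\mathcal J(\delta)$ of subsets $J\subseteq I$ whose up and down interval decomposition contains~$\delta$. Lemmas~\ref{lem:(1,0)-properties} and~\ref{lem:(1,1)-properties} reduce this to a short case analysis on whether $x,y$ lie in $\overline{\Do}_c$ or in $\Up_c$ and on their position relative to $(a,b)$ and to the up intervals $[\alpha_i,\beta_i]_{\Up_c}$ of~$I$. In each case the membership of certain elements of~$I$ in~$J$ is forced (to make~$\delta$ appear, and to prevent it from being absorbed into a larger interval of~$J$'s decomposition), while the remaining elements of~$I$ are free. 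Whether~$\delta$ is rightmost or middle in its nested component refines the parametrisation further and determines whether the contribution also carries the $-\tilde z^c_{[n]}$ correction.

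The heart of the argument, and the main obstacle, is the cancellation $A(I,\delta)=0$ for $\delta\notin\mathscr D_I$. I would construct a sign-reversing involution on~$\mathcal J(\delta)$ by toggling the membership in~$J$ of a single well-chosen element of~$I$---typically an element of $\Up_c\cap I$, or a $\Do_c$-element of~$I$ at the boundary of one of the up intervals, that lies strictly between the endpoints of~$\delta$ and the four distinguished indices $a$, $b$, $\gamma$, $\Gamma$. This toggle flips the sign $(-1)^{|I\setminus J|}$ while leaving both the appearance of~$\delta$ in the decomposition and its rightmost/middle status intact. Such a toggle element exists precisely when $\delta\notin\mathscr D_I$; when $\delta\in\mathscr D_I$ the involution has a unique surviving configuration whose sign is $(-1)^{|I\setminus R_\delta|}$. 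The identity $B(I)=0$ follows from a parallel pairing argument on the set of pairs $(J,\delta^J_{i,j})$ with $\delta^J_{i,j}$ in middle position, essentially because every middle contribution is paired with another middle contribution of opposite $(-1)^{|I\setminus J|}$-sign. Assembling these three identities delivers the formula asserted in Proposition~\ref{prop:y_I_for_non-degenerate_D_I}.
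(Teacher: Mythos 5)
The proposal is essentially the same approach as the paper's proof. Both proceed by taking Equation~(\ref{horrible_equation}), regrouping the terms by proper diagonal~$\delta$, and showing that the resulting alternating sums collapse. The paper carries this out through an explicit case analysis on the up-and-down type of~$R_\delta$ (types $(1,0)$, $(1,1)$, $(1,2)$ and subcases thereof), followed by a secondary case distinction on whether~$\gamma$ and~$\Gamma$ lie in~$\Do_c$ or in~$\Up_c$; your sign-reversing involution language is a tidy reformulation of the telescoping cancellations the paper performs by hand, and your three target identities $A(I,\delta)=0$ for $\delta\notin\mathscr D_I$, $A(I,\delta_i)=(-1)^{|I\setminus R_{\delta_i}|}$, and $B(I)=0$ are correct and do imply the proposition.

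One caveat: your sketch for $B(I)=0$ understates the work. In the sub-case $\gamma\in\Up_c$, $\Gamma=n\in\Do_c$ with some $u\in\Up_c$ satisfying $[u,n]\cap I=[u,n]$ (the paper's final case~(b)), the $z^c_{[n]}$-contributions do not arise only from~$\delta_2$ and~$\delta_4$; they come from an entire family of diagonals $\{a,\alpha\}$ (Case~2.b.ii of the paper) and $\{\gamma,\alpha\}$ (Case~3.b), for~$\alpha$ ranging over a subinterval of~$\Up_c$. Moreover for such a $\delta$ the rightmost/middle split is asymmetric: the alternating sum of the $z^c_{R_\delta}$ terms over all relevant~$J$ is zero, but the $z^c_{[n]}$ terms are summed only over the middle~$J$'s, leaving a nonzero residue for that single~$\delta$. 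The required pairing must therefore match $(J,\{a,\alpha\})$ with $(J',\{\gamma,\alpha\})$ where~$J$ and~$J'$ differ by the element~$\gamma$, i.e.\ it changes~$\delta$ as well as~$J$. Your phrasing "a pairing on the set of pairs $(J,\delta^J_{i,j})$" allows this in principle, but "paired with another middle contribution of opposite $(-1)^{|I\setminus J|}$-sign" reads as though a single element-toggle with~$\delta$ fixed suffices, which it does not here. The paper spends its final case~(b) verifying precisely this cross-diagonal cancellation, and your outline would need to supply the analogous argument. The remaining pieces — describing~$\mathcal J(\delta)$, the telescoping for $A(I,\delta)=0$, and identifying the unique surviving~$J$ when $\delta\in\mathscr D_I$ — line up directly with the paper's Cases~1--3.
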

\noindent
The proof is not difficult but long and convoluted, so we first outline the proof. The goal is to simplify the 
rewritten Equation~(\ref{horrible_equation}) for~$y_I$ stated above. To that respect, we first study the potential contribution 
of a proper diagonal~$\delta$ that occurs in the sum on the right-hand side. Given such a diagonal~$\delta$, we 
study which sets $S\subseteq I$ satisfy $\delta\in\mathcal D_S$ in order to collect all terms that involve~$z_{R_\delta}^c$. 
We will show that the corresponding sum vanishes often. This result is obtained by a case study that depends 
on the type of the up and down interval decomposition of~$R_\delta$. Since the up and down interval decomposition
of $R_\delta$ is of type $(1,0)$, $(1,1)$ or $(1,2)$ for any proper diagonal~$\delta$, we study these cases in detail. 
After the necessary information is deduced 
for every possible diagonal~$\delta$, we further simplify the formula for~$y_I$
by another case study that distinguishes whether $\gamma$ or $\Gamma$ is element of~$\Do_c$ or~$\Up_c$.

\begin{proof}
	By assumption, the set~$I\subset [n]$ has an up and down interval decomposition of type~$(1,w)$, that is, 
	$I=(a,b)_{\Do_c}\sqcup \bigsqcup_{j=1}^w[\alpha_j,\beta_j]_{\Up_c}$. Let~$\delta$ be some diagonal $\delta^J_{i,j}$ 
	that occurs on the right-hand side of the equation for~$y_I$. In other words, $\delta$ is a proper and non-degenerate 
	diagonal~$\delta^J_{i,j}$ associated to the up and down interval decomposition of type $(v^J,w^J)$ for some $J\subseteq I$. 
	By Example~\ref{expl:types_of_diagonals}, the up and down interval decomposition of $R_{\delta}$ is either of 
	type $(1,0)$, $(1,1)$ or $(1,2)$. A good understanding which sets $S\subseteq I$ (besides $J$)
	satisfy $\delta\in\mathcal D_S$ is essential for the simplification. The complete proof is basically a case 
	study of these three cases. 
	\begin{compactenum}[1.]
	\item $R_\delta$ has up and down decomposition of type~$(1,0)$, see Figure~\ref{fig:example_cases_1}.
	\label{case:one}\\
		Then $R_{\delta}=(\tilde a, \tilde b)_{\Do_c}\subseteq (a,b)_{\Do_c}$ and we may consider $J=R_\delta\subseteq I$
		as witness for the occurrence of $\delta$ in the right-hand side of~(\ref{horrible_equation}). Let
		$S\subseteq I$ be a set with $\delta\in \mathcal D_S$. Then	$J=(\tilde a,\tilde b)_{\Do_c}$ is necessarily 
		a nested component of type~$(1,0)$ of~$S$ and all other nested components are subsets of~$(a,\tilde a)\cap I$ 
		and~$(\tilde b,b)\cap I$. It follows that~$S\subseteq I$ satisfies $\delta\in\mathcal D_S$ if and only if
		\[
			R_{\delta}\subseteq S \subseteq R_{\delta}\cup \bigl((a,\tilde a)\cap I\bigr)\cup \bigl((\tilde b,b)\cap I\bigr).
		\]
		We now collect all terms for $\tilde z_{R_\delta}^c$ in the expression for $y_I$. Since $\delta$ is a 
		proper diagonal, we have $\tilde z_{R_\delta}^c\neq 0$ and the resulting alternating sum vanishes if 
		and only if there is more than one term of this type, that is, if and only if
		$\bigl((a,\tilde a)\cap I\bigr)\cup \bigl((\tilde b,b)\cap I\bigr)\neq \varnothing$. If 
		$\bigl((a,\tilde a)\cap I\bigr)\cup \bigl((\tilde b,b)\cap I\bigr)= \varnothing$, 
		we obtain $(-1)^{|I\setminus R_\delta|}\tilde z_{R_\delta}^c$ as contribution for~$y_I$. 
		
		For later use in this proof, we note that 
		$\bigl((a,\tilde a)\cap I\bigr)\cup \bigl((\tilde b,b)\cap I\bigr)= \varnothing$ guarantees 
		$\delta \in \mathscr D_I$. The diagonal~$\delta_1$ is always of type~$(1,0)$ if the up and down decomposition
		of~$R_\delta$ is of type~$(1,0)$. Similarly, we have $\delta_2\in\mathscr D_I$ is of type~$(1,0)$ if additionally 
		$\Gamma\in\Do_c$, $\delta_3\in\mathscr D_I$ is of type~$(1,0)$ if additionally $\gamma\in \Do_c$, 
		and $\delta_4\in\mathscr D_I$ is of type~$(1,0)$ if additionally $\gamma,\Gamma\in\Do_c$.

	\item $R_\delta$ has up and down decomposition of type~$(1,1)$. \\
		In contrast to Case~\ref{case:one}, $R_\delta\subseteq I$ is not true in general any more. We distinguish two cases,
		either $\delta=\{\tilde \beta, \tilde b\}$ with $\tilde \beta<\tilde b$, $\tilde \beta\in\Up_c$ and $\tilde b\in\Do_c$
		or $\delta=\{\tilde a, \tilde \alpha\}$ with $\tilde a<\tilde \alpha$, $\tilde a\in\Do_c$ and $\tilde \alpha\in\Up_c$.
		\begin{compactenum}[a.]
			\item $\delta=\{\tilde \beta, \tilde b\}$, see Figure~\ref{fig:example_cases_2a}\\
				\label{case:two_a}
				Observe first that $R_\delta=(0,\tilde b)_{\Do_c}\cup[u_1,\tilde\beta]_{\Up_c}$ 
				with $\tilde\beta<\tilde b\leq b$. 
				Since we assume that $\delta$ appears in the right-hand side of~(\ref{horrible_equation}), we have 
				$\tilde \beta\in I$ and may consider $J=R_\delta\cap I$. 
			
				If $S\subseteq I$ is a subset with $\delta\in \mathcal D_S$ then $\delta$ must be the `rightmost' diagonal 
				of one nested component for~$S$. This means that the diagonal~$\delta$ associated to~$S$ is never of type 
				$(\tilde z_{R_{\delta}}^c-\tilde z^c_{[n]})$ in the expression for~$y_I$. Similarly to Case~\ref{case:one}, 
				we conclude that the terms~$\tilde z_{R_\delta}^c$ cancel if and only if 
				\[
			  	  \bigl((a,\tilde \beta)\cap I\bigr)\cup\bigl((\tilde b, b)\cap I\bigr)\neq \varnothing
			  	  \qquad\text{or}\qquad 
			  	  \tilde z_{R_\delta}^c=0.
				\]
				Again, $\tilde z_{R_\delta}^c\neq 0$ since $\delta$ is a proper diagonal and the terms for 
				$\tilde z_{R_\delta}^c$ do not cancel if and only if there is only one subset $S\subseteq I$ 
				with $\delta\in \mathcal D_S$, that is, if $((a,\tilde \beta)\cap I)\cup((\tilde b, b)\cap I)= \varnothing$.
			
				For later use in ths proof, we mention the two possible scenarios 
				if $((a,\tilde \beta)\cap I)\cup((\tilde b, b)\cap I)= \varnothing$. Firstly, if $\gamma\in\Up_c$ and
				$\Gamma\in\Do_c$, then $\delta\in\{\delta_3,\delta_4\}$ and the contribution of $\delta_3$ and~$\delta_4$ 
				to~$y_I$ is
				\[
				  (-1)^{|I\setminus R_{\delta_3}|}\tilde z_{R_{\delta_3}}^c 
				  \qquad\text{and}\qquad
				  (-1)^{|I\setminus R_{\delta_4}|}\tilde z_{R_{\delta_4}}^c.
				\]
				Secondly, if $\gamma,\Gamma\in \Up_c$, then $\delta=\delta_3$ and the contribution to $y_I$ is 
				$(-1)^{|I\setminus R_{\delta_3}|}\tilde z_{R_{\delta_3}}^c$.

				\begin{figure}[b]
				      \begin{center}
						\rule{0mm}{5mm}
						
				      \begin{minipage}{0.95\linewidth}
				         \begin{center}
				         \begin{overpic}
				            [width=0.4\linewidth]{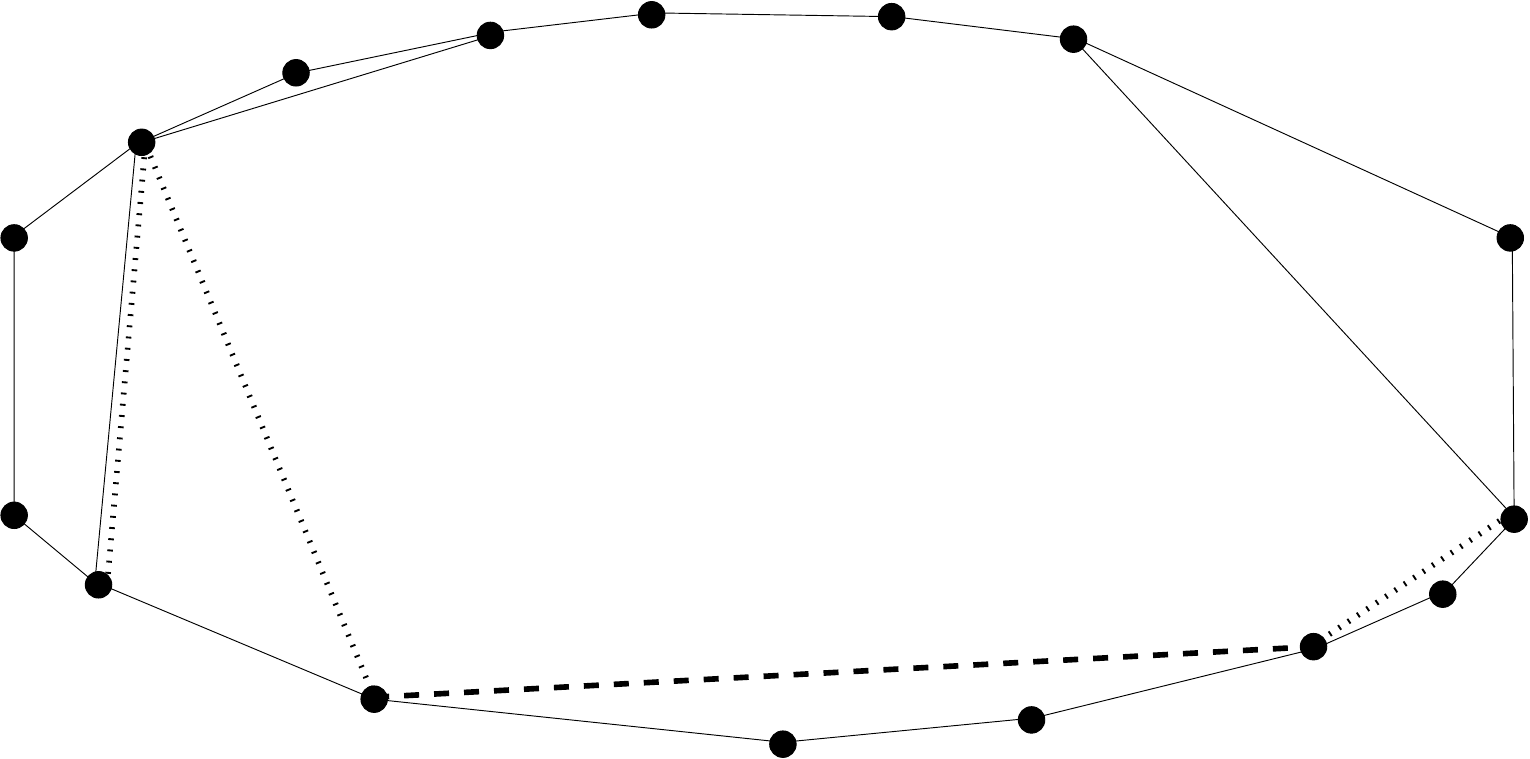}
				            \put(-8,53){$0$}
				            \put(-8,20){$1$}
				            \put(7,9){$2$}
				            \put(14,67){$3$}
				            \put(23,73){$4$}
				            \put(35,-3){$5$}
				            \put(45,78){$6$}
				            \put(62,80){$7$}
				            \put(80,-10){$8$}
				            \put(95,80){$9$}
				            \put(110,-5){$10$}
				            \put(115,77){$11$}
				            \put(138,2){$12$}
				            \put(152,10){$13$}
				            \put(163,20){$14$}
				            \put(163,53){$15$}
				            \put(90,15){$\delta$}

					     \end{overpic}
				         \end{center}
				         \caption[]{Let $I=\{3,5,6,7,8,9,10,11,12,13\}$ with $\gamma=3$ and $\Gamma=13$.
				 					Its up and down decomposition is $I=(2,14)_{\Do_c}\cup[3,3]_{\Up_c}\cup[6,11]_{\Up_c}$.\\
									\rule{2mm}{0cm} The diagonal $\delta=\{5,12\}$ appears in the right hand side for $y_I$ since 
									$(5,12)_{\Do_c}\subseteq (2,14)_{\Do_c}$ and the up and down interval decomposition 
									of $R_\delta$ has type $(1,0)$. Since $(2,5)\cap I = \{3\}$ and $(12,14)\cap I=\{13\}$, 
									$\delta$ is associated to $S\in\left\{\{8,10\}, \{3,8,10\},\{8,10,13\}, \{3,8,10,13\}\right\}$, 
									the diagonals associated to the up and down interval decompositions of~$S$ form a subset of
									the dashed diagonals. The contribution of~$\delta$ to~$y_I$ vanishes.\\
									\rule{2mm}{0cm} The only diagonals associated to some $J\subseteq I$ 
									with up and down interval decomposition of type~$(1,0)$ and non-vanishing contribution to~$y_I$ 
									are diagonals associated to only 
									one subset $J\subseteq I$, i.e. $\delta_1=\{2,14\}$ and $\delta_2=\{2,13\}$ in this example. }
				         \label{fig:example_cases_1}
				      \end{minipage}
						$ $
						
						\rule{0mm}{9mm}
				      \end{center}
				\end{figure}

				\begin{figure}[b]
				      \begin{center}
				      \begin{minipage}{0.95\linewidth}
				         \begin{center}
				         \begin{overpic}
				            [width=0.4\linewidth]{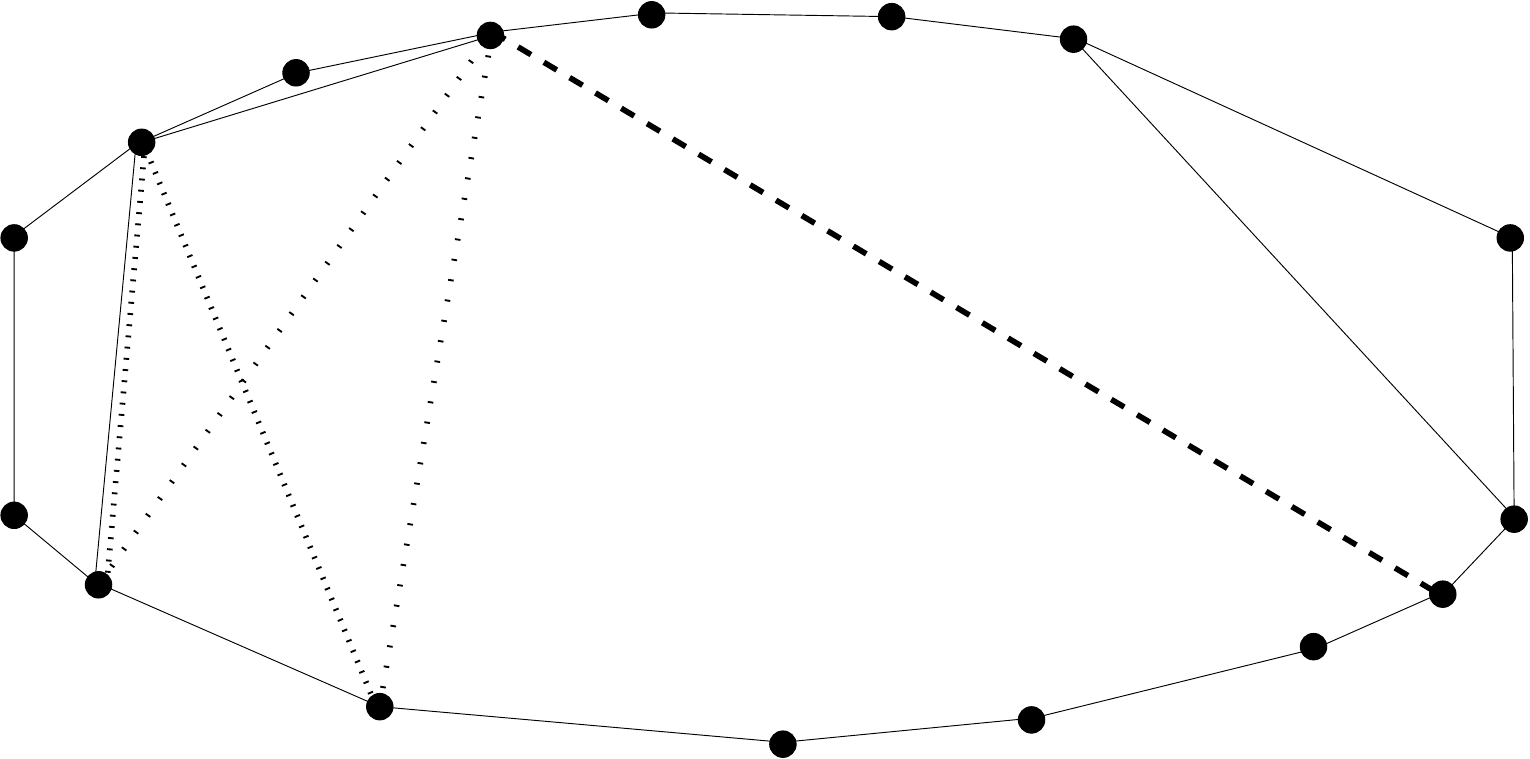}
				            \put(-8,53){$0$}
				            \put(-8,20){$1$}
				            \put(7,9){$2$}
				            \put(14,67){$3$}
				            \put(23,73){$4$}
				            \put(35,-3){$5$}
				            \put(45,78){$6$}
				            \put(62,80){$7$}
				            \put(80,-10){$8$}
				            \put(95,80){$9$}
				            \put(110,-5){$10$}
				            \put(115,77){$11$}
				            \put(138,2){$12$}
				            \put(152,10){$13$}
				            \put(163,20){$14$}
				            \put(163,53){$15$}
				            \put(102,52){$\delta$}
					     \end{overpic}
				         \end{center}
				         \caption[]{Consider $I$ as in Figure~\ref{fig:example_cases_1}.
				 					For $\delta=\{6,13\}$, the up and down interval decomposition of $R_\delta$ is 
									of the required sub-type of $(1,1)$. $\delta$ is associated to 
									$S\in\left\{\{3,5,6,8,10,12\}, \{5,6,8,10,12\}, \{3,6,8,10,12\}, \{6,8,10,12\}\right\}$
									since $(2,6)\cap I = \{3,5\}$ and $(13,14)\cap I = \varnothing$ and some of the diagonals 
									are associated to the interval decomposition of~$S$. The contribution of~$\delta$ to~$y_I$ 
									vanishes. Diagonals of the required sub-type of~$(1,1)$ and non-vanishing contribution 
									to~$y_I$ are the diagonals~$\delta$ that are associated to precisely one subset $J\subseteq I$, 
									that is, $\delta_3=\{3,14\}$ and $\delta_4=\{3,13\}$ in this figure. }
						         \label{fig:example_cases_2a}
				      \end{minipage}
				      \end{center}
				\end{figure}

	    \item $\delta=\{\tilde a, \tilde \alpha\}$\\
			\label{case:two_b}
			Observe first that $R_\delta=(\tilde a,n+1)_{\Do_c}\cup[\tilde\alpha,u_m]_{\Up_c}$ 
			with $a\leq \tilde a<\tilde \alpha$. Since we assume that $\delta$ appears in the right-hand side 	
			of~(\ref{horrible_equation}), we have $\tilde \alpha\in I$ and may consider $J=R_\delta\cap I$.
			
		\begin{figure}[b]
		      \begin{center}
			  \rule{0mm}{5mm}	
			
		      \begin{minipage}{0.95\linewidth}
		         \begin{center}
		         \begin{overpic}
		            [width=0.4\linewidth]{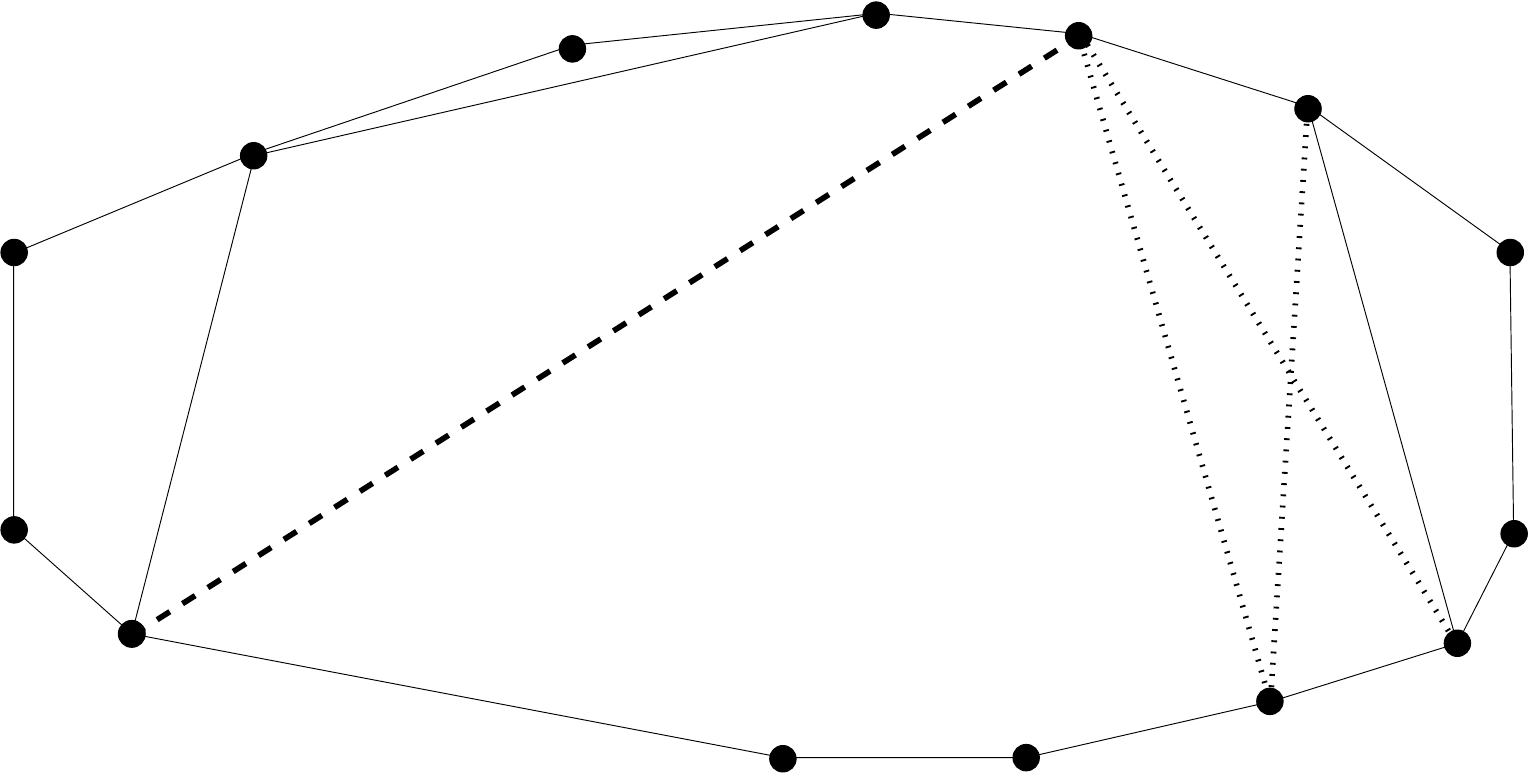}
		            \put(-8,53){$0$}
		            \put(-8,20){$1$}
		            \put(7,9){$2$}
		            \put(19,67){$3$}
		            \put(55,80){$4$}
		            \put(80,-10){$5$}
		            \put(95,82){$6$}
		            \put(112,-5){$7$}
		            \put(116,79){$8$}
		            \put(137,0){$9$}
		            \put(140,70){$10$}
		            \put(156,6){$11$}
		            \put(163,20){$12$}
		            \put(163,53){$13$}
		            \put(48,42){$\delta$}
			     \end{overpic}
		         \end{center}
		         \caption[]{Let $I=\{3,5,6,7,8,9,10\}$ with $\gamma=3$ and $\Gamma=10$.
		 					Its up and down decomposition is $I=(2,11)_{\Do_c}\cup[3,3]_{\Up_c}\cup[6,10]_{\Up_c}$.\\
							\rule{2mm}{0cm} For $\delta=\{2,8\}$, the up and down interval decomposition of $R_\delta$ is
							of the required sub-type of $(1,1)$. Since $(2,2)\cap I=\varnothing$ and $(8,11)\cap I = \{9,10\}$, 
							$\delta$ is associated to $S\in\left\{\{5,7,8\}, \{5,7,8,9\}, \{5,7,8,10\}, \{5,7,8,9,10\}\right\}$. 
							Thus~$\delta$ does not contribute to~$y_I$. \\
							\rule{2mm}{0cm} The only diagonals~$\delta$ associated to $J\subseteq I$ of the required sub-type
							of~$(1,1)$ that contribute to~$y_I$ are diagonals associated to precisely one subset $J\subseteq I$. 
							In this figure, only $\delta_2=\{2,10\}$ contributes $-(\tilde z_{R_{\delta_2}}^c-\tilde z_{[n]})$. }
		         \label{fig:example_cases_2bi}
		      \end{minipage}
			  	$ $
				
				\rule{0mm}{9mm}
		      \end{center}
		\end{figure}
			If $S\subseteq I$ with $\delta\in \mathcal D_S$, then~$\delta$ (associated to~$S$) can be of type 
			$\tilde z_{R_{\delta}}^c$ or $(\tilde z_{R_{\delta}}^c-\tilde z^c_{[n]})$ in the expression for~$y_I$. 
			The diagonal~$\delta$ is of type $\tilde z_{R_{\delta}}^c$ if and only if 
			$R_\delta=R_\delta\cap I$ and $S=R_\delta\cup M$ for some subset $M\subseteq (a,\tilde a)\cap I$.
			The diagonal~$\delta$ is of type $(\tilde z_{R_{\delta}}^c-\tilde z^c_{[n]})$ for all other subsets 
			$S\subseteq I$ with $\delta\in \mathcal D_S$, in particular, we conclude $R_\delta \supset R_\delta\cap S$.
			
			We now distinguish two sub-cases: either $\delta$ (associated to~$S$) is of type 
			$(\tilde z_{R_{\delta}}^c-\tilde z^c_{[n]})$ (in the expression for $y_I$) for all $S\subseteq I$ 
			with $\delta\in\mathcal D_S$ or there is a $S\subseteq I$ with $\delta\in \mathcal D_S$ such that $\delta$ 
			(associated to~$S$) is of type $\tilde z_{R_{\delta}}^c$ (in the expression for $y_I$).
			\begin{compactenum}[i.]
				\item $\delta$ is of type $(\tilde z_{R_{\delta}}^c-\tilde z^c_{[n]})$ for all $S\subseteq I$ 
					with $\delta\in \mathcal D_S$, see Figure~\ref{fig:example_cases_2bi}.\\
					\label{case:two_b_i}
					As mentioned, we have $R_\delta\supset R_\delta\cap S$ for all sets $S\subseteq I$ with $\delta\in \mathcal S$.
					Moreover, these sets are in bijection to the subsets of
					$\bigl((a,\tilde a)\cap I\bigr)\cup\bigl((\tilde\alpha,b)\cap I\bigr)$:
					\[
						S=\bigl(R_\delta\cap (I\setminus B)\bigr)\cup A\qquad\text{for $A\subseteq (a,\tilde a)\cap I$
						and $B \subseteq (\tilde\alpha,b)\cap I$.}
					\]
					If there is more than one set~$S\subseteq I$ with $\delta\in \mathcal D_S$, then collecting all the summands 
					$(\tilde z_{R_{\delta}}^c-\tilde z^c_{[n]})$ in the expression for~$y_I$ yields a vanishing alternating sum. 
					If there is only one set~$S\subseteq I$ with $\delta\in\mathcal D_S$ as associated diagonal then 
					$\bigl((a,\tilde a)\cap I\bigr)\cup\bigl((\tilde\alpha,b)\cap I\bigr)= \varnothing$ and it follows that
					$\Gamma=\tilde\alpha\in\Up_c$ and $\tilde a\in\{a,\gamma\}\cap\Do_c$. 
					
					For later use in this proof, we note that $\gamma\in\Do_c$ implies $\delta\in\{\delta_2,\delta_4\}$. 
					The only possible contributions in the expression for~$y_I$ are therefore
					\[
						(-1)^{|I\setminus R_{\delta_2}|}(\tilde z_{R_{\delta_2}}^c -\tilde z^c_{[n]})
						\qquad\text{and}\qquad
						(-1)^{|I\setminus R_{\delta_4}|}(\tilde z_{R_{\delta_4}}^c -\tilde z^c_{[n]}).
					\]
					But since the corresponding subsets $R_{\delta_2}\cap I$ and $R_{\delta_4}\cap I$ differ by~$\gamma$, 
					the effective contribution to $y_I$ is
					\[
						(-1)^{|I\setminus R_{\delta_2}|}\tilde z_{R_{\delta_2}}^c 
						+
						(-1)^{|I\setminus R_{\delta_4}|}\tilde z_{R_{\delta_4}}^c.
					\]
					If $\gamma\in\Up_c$, then $\delta=\delta_2$ and we obtain
					\[
						(-1)^{|I\setminus R_{\delta_2}|}(\tilde z_{R_{\delta_2}}^c-\tilde z^c_{[n]})
					\]
					as contribution for $y_I$.
				\item There is a $S\subseteq I$ with $\delta\in \mathcal D_S$ such that $\delta$ is of type $\tilde z_{R_{\delta}}^c$,
				 	see Figure~\ref{fig:example_cases_2bii}. \\
					\label{case:two_b_ii}
					Since $\delta$ must be the `rightmost' diagonal associated to $S$ if $\delta$ (associated 
					to $S$) is of type $\tilde z_{R_{\delta}}^c$ (in the expression for $y_I$), we conclude 
					$R_\delta=R_\delta\cap I$.    
					\noindent
					In particular, we have $\Gamma=n$ and $b=n+1$ and thus $(\tilde \alpha,b)\cap I\neq\varnothing$ 
					and $(\tilde \alpha,b)\cap I=(\tilde\alpha,b)$. If $(a,\tilde a)\cap I\neq \varnothing$, then 
					collecting the terms for $\tilde z_{R_{\delta}}^c$ and~$\tilde z^c_{[n]}$ in the expression for~$y_I$ 
					again yields no contribution. We may therefore assume $(a,\tilde a)\cap I= \varnothing$, that is
					$\tilde a\in\{a,\gamma\}\cap \Do_c$. First suppose that $\gamma\in\Do_c$. Then $\delta$ is either
					$\delta_a=\{a,\tilde\alpha\}$ or $\delta_\gamma=\{\gamma,\tilde\alpha\}$. Now $\delta$ is of type 
					$\tilde z_{R_{\delta}}^c$ in the expression of $y_I$ if and and only if $\delta$ is associated to $R_{\delta_a}$ 
					or $R_{\delta_\gamma}$. In all other situations, $\delta$ is of type 
					$(\tilde z_{R_{\delta}}^c-\tilde z^c_{[n]})$ in the expression of $y_I$ and is associated to a set 
					$R_{\delta_a}\setminus M$ or $R_{\delta_\gamma}\setminus M$ with non-empty 
					$M\subseteq (\tilde \alpha,n+1)$. Collecting the terms for $\tilde z_{R_{\delta_a}}^c$, 
					$\tilde z_{R_{\delta_\gamma}}^c$, and $\tilde z_{[n]}^c$ yields a vanishing contribution 
					as desired (collecting the terms for $\tilde z_{[n]}^c$ for fixed $\delta$ does not yield a 
					vanishing contribution, but the terms from $\delta_a$ and $\delta_\gamma$ cancel). If $\gamma\in\Up_c$ 
					then a similar argument gives 
					\[
						(-1)^{|I\setminus R_\delta|}\tilde z_{[n]}^c 
						\qquad
						\text{for $\delta=\{a,\tilde \alpha\}$ with $\tilde \alpha\in\Up_c$ and $R_\delta=R_\delta\cap I$}
					\]
					as contribution for~$y_I$.
			\end{compactenum}
		\end{compactenum}
		\begin{figure}[b]
		      \begin{center}
			  \rule{0mm}{5mm}
			
		      \begin{minipage}{0.95\linewidth}
		         \begin{center}
		         \begin{overpic}
		            [width=0.4\linewidth]{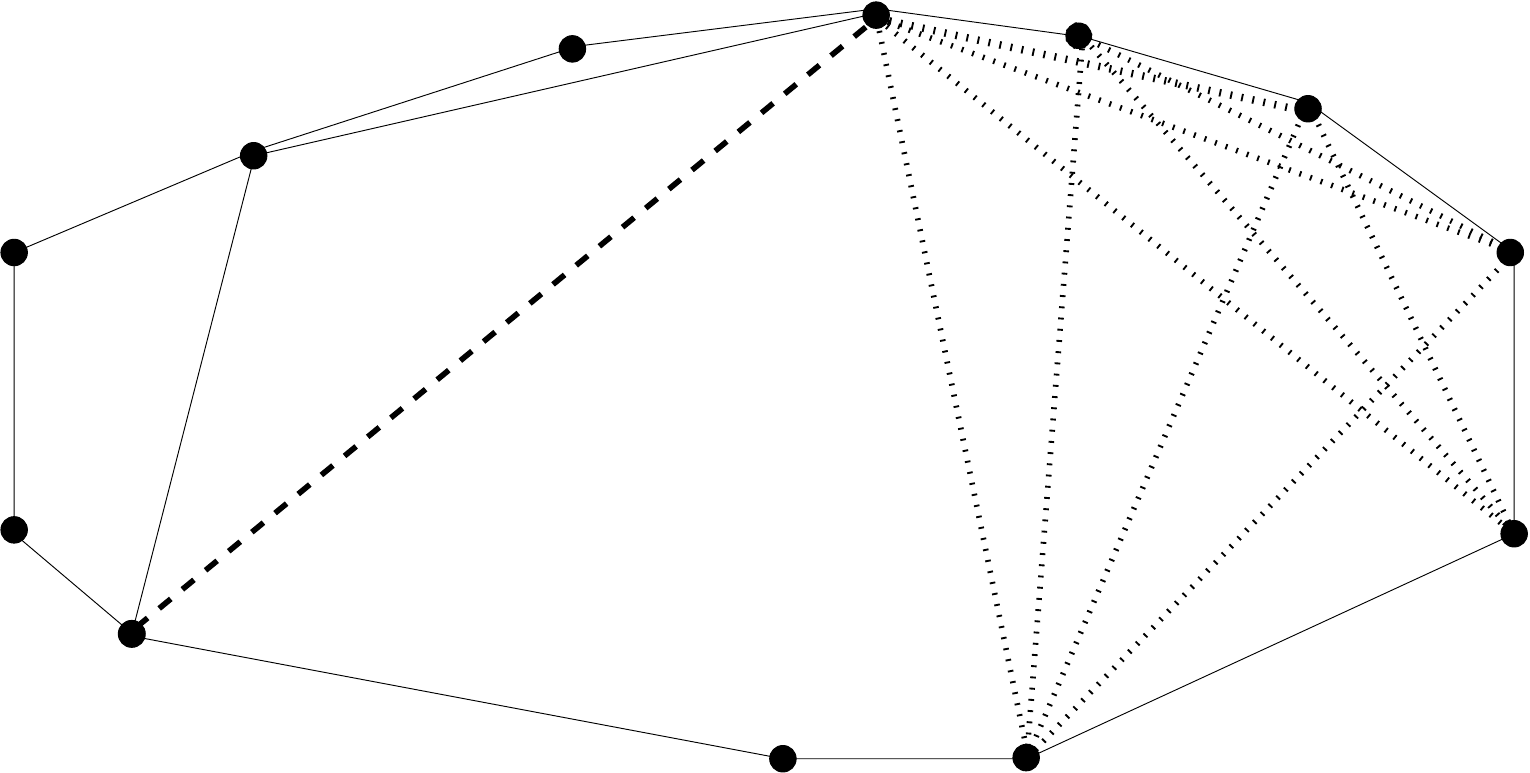}
		            \put(-8,53){$0$}
		            \put(-8,20){$1$}
		            \put(7,9){$2$}
		            \put(19,67){$3$}
		            \put(55,80){$4$}
		            \put(80,-10){$5$}
		            \put(95,82){$6$}
		            \put(112,-5){$7$}
		            \put(116,79){$8$}
		            \put(140,70){$9$}
		            \put(163,20){$10$}
		            \put(163,53){$11$}
		            \put(48,50){$\delta$}
			     \end{overpic}
		         \end{center}
		         \caption[]{Consider $I=\{3,5,6,7,8,9,10\}$ with $\gamma=3$ and $\Gamma=10$. The up and down
		 					interval decomposition is $I=(2,11)_{\Do_c}\cup[3,3]_{\Up_c}\cup[6,9]_{\Up_c}$.\\ 
							\rule{2mm}{0cm} For $\delta=\{2,6\}$, the up and down interval decomposition 
							of~$R_\delta$ has the required sub-type of~$(1,1)$. We have $R_\delta= R_\delta\cap I$,
							so $J=\{ 5,6,7,8,9,10\}$ is the unique~$J\subseteq I$ such that $\delta\in\mathcal D_J$ 
							is of type~$\tilde z_{R_\delta}^c$ (in the expression for~$y_I$). For all other 
							$S\subseteq I$ with $\delta\in\mathcal D_S$, $\delta$ is of type 
							$(\tilde z_{R_{\delta}}^c-\tilde z^c_{[n]})$ in the expression of $y_I$.
							Since $\gamma\in\Up_c$,~$\delta$ contributes 
							$(-1)^{|I\setminus R_\delta|}\tilde z_{[n]}^c=-\tilde z_{[n]}^c$ to~$y_I$. 
							We have other diagonals contributing to~$y_I$. In this example (and considering 
							only the specific sub-type) these are the proper diagonals~$\delta$ with 
							$R_{\delta}=R_{\delta}\cap I$ and end-point~$a=2$, that is, $\delta=\{2,8\}$
							and $\delta = \{2,9\}$, their contribution is $-\tilde z_{[n]}^c$ and 
							$\tilde z_{[n]}^c$.}
		         \label{fig:example_cases_2bii}
		      \end{minipage}
			  	$ $
				
				\rule{0mm}{9mm}
		      \end{center}
		\end{figure}
		
	\item $R_\delta$ has up and down decomposition of type~$(1,2)$.\\
		\label{case:three}
		If $R_\delta$ is of type~$(1,2)$ then $\delta=\{\alpha,\beta\}$ with $\alpha,\beta\in \Up_c$ and there 
		is $u\in\Up_c$ such that $a<\alpha<u<\beta< b$. This in turn gives
		\[
		  R_{\delta}=(0, n+1)_{\Do_c}\cup [u_1,\alpha]_{\Up_c}\cup [\beta,u_m]_{\Up_c}
		\]
		as up and down interval decomposition for $R_\delta$. 
		By arguments as before, we conclude that collecting the terms for~$\tilde z_{R_\delta}^c$ and 
		$\tilde z^c_{[n]}$ yields a vanishing contribution to $y_I$ if $(a,\alpha)\cap I\neq \varnothing$. 
		We therefore assume that $(a,\alpha)\cap I= \varnothing$ which is equivalent to $\gamma=\alpha\in\Up_c$. 
		As a consequence, $\delta$ is an associated diagonal of~$S\subseteq I$ if and only if $S=(R_\delta\cap I)\setminus M$ 
		for some $M\subseteq (\beta,b)\cap I$.
		
		\medskip
		We now distinguish two cases: either there is a $S\subseteq I$ with $\delta\in\mathcal D_S$ such that $\delta$ 
		(associated to~$S$) is of type~$\tilde z_{R_{\delta}}^c$ (in the expression for~$y_I$) or not.		
		\begin{compactenum}[a.]
			\item There is no $S\subseteq I$ with $\delta\in\mathcal D_S$ such that $\delta$ (associated to~$J$) is of 
				type~$\tilde z_{R_{\delta}}^c$, see Figure~\ref{fig:example_cases_3a}.\\
				\label{case:three_a}
				If $(\beta,b)\cap I\neq \varnothing$ then collecting the terms $\tilde z_{R_\delta}^c$ and $\tilde z^c_{[n]}$ 
				cancel respectively. If $(\beta,b)\cap I= \varnothing$ then we have $\Gamma=\beta\in\Up_c$ and $\delta=\delta_4$. 
				In this situation, $\delta$ has a unique contribution to~$y_I$ which equals 
				$(-1)^{|I\setminus R_{\delta_4}|}(\tilde z_{R_{\delta_4}}^c - \tilde z^c_{[n]})$.
			\item There is a set $S\subseteq I$ with $\delta\in \mathcal D_S$ such that $\delta$ is of type $\tilde z_{R_{\delta}}^c$, 
				see Figure~\ref{fig:example_cases_3b}. \\
				\label{case:three_b}
				Since $\delta$ is the `rightmost' diagonal associated to~$S\subseteq I$ and since $(a,\alpha)\cap I =\varnothing$,
				we conclude that $b=n+1$ and $\Gamma=n\in\Do_c$ (recall that we also have $\alpha=\gamma\in \Up_c$). Now observe
				that the set $S\subseteq I$ with $\delta\in\mathcal D_S$ such that~$\delta$ (associated to~$S$) is of 
				type~$\tilde z_{R_{\delta}}^c$ (in the expression for~$y_I$) is unique: it is $R_\delta\cap I$. In particular, 
				we have $[\beta,n]\cap I=[\beta,n]$. Collecting the terms $\tilde z_{R_\delta}^c$ for all subsets~$S\subseteq I$ 
				with $\delta\in\mathcal D_S$ cancel, but collecting the terms $\tilde z^c_{[n]}$ does not vanish: we have a
				contribution of $(-1)^{|I\setminus R_\delta|}\tilde z_{[n]}^c$ to~$y_I$. We conclude that every 
				diagonal~$\delta=\{\gamma,\beta\}$ with $\beta\in\Up_c$, $[\beta,n]\cap I=[\beta,n]$ and 
				$\{ \gamma,\beta \} \neq \{ u_r,u_{r+1}\}$ contributes $(-1)^{|I\setminus R_\delta|}\tilde z_{[n]}^c$ to $y_I$.
		\end{compactenum}
	\end{compactenum}
	\begin{figure}[b]
		  $ $\\[-5mm]
	      \begin{center}
	      \begin{minipage}{0.95\linewidth}
	         \begin{center}
	         \begin{overpic}
	            [width=0.4\linewidth]{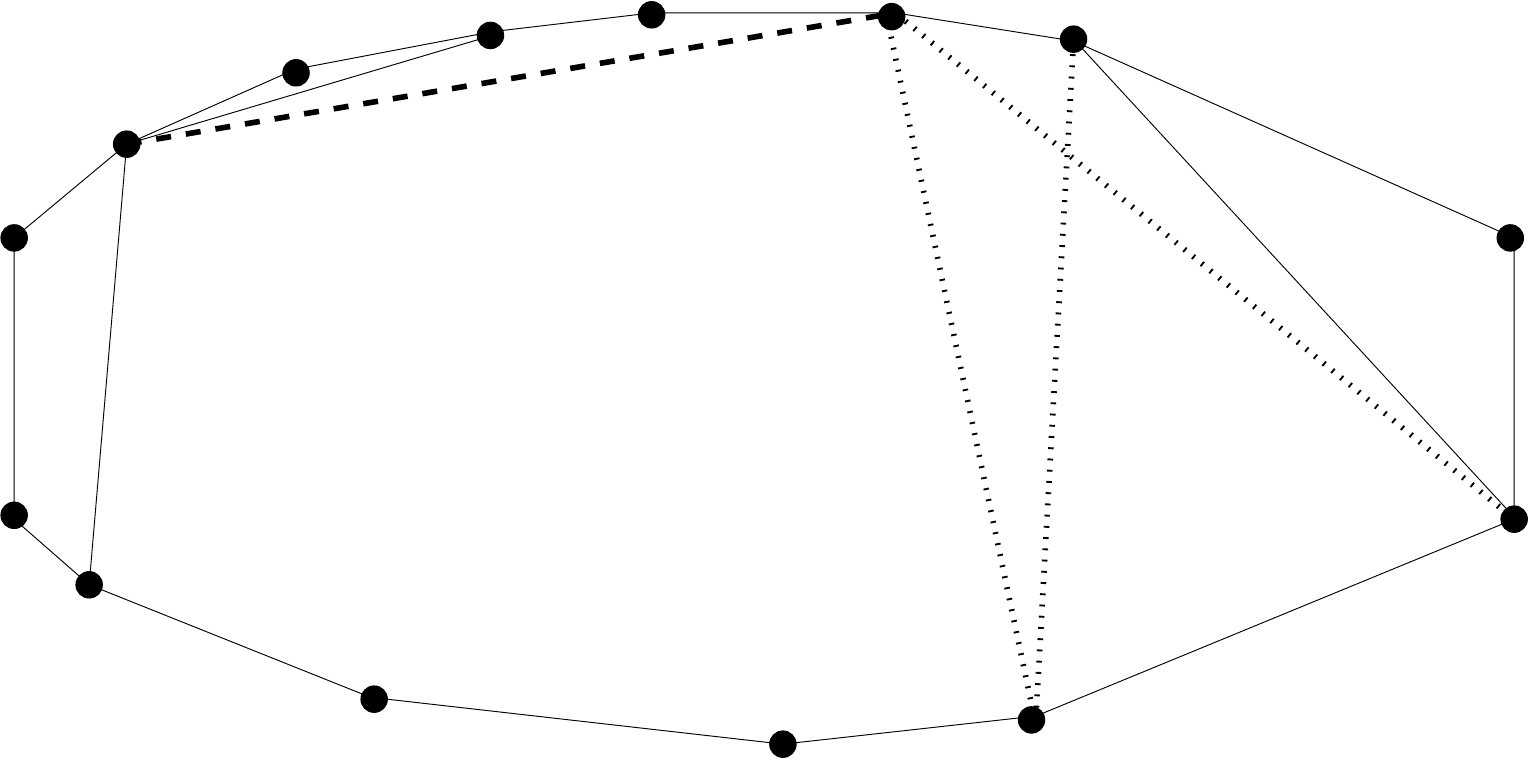}
	            \put(-8,53){$0$}
	            \put(-8,20){$1$}
	            \put(7,9){$2$}
	            \put(8,68){$3$}
	            \put(23,73){$4$}
	            \put(35,-3){$5$}
	            \put(45,78){$6$}
	            \put(62,80){$7$}
	            \put(80,-10){$8$}
	            \put(95,80){$9$}
	            \put(110,-5){$10$}
	            \put(115,77){$11$}
	            \put(163,20){$12$}
	            \put(163,53){$13$}
	            \put(65,65){$\delta$}
		     \end{overpic}
	         \end{center}
	         \caption[]{Consider $I=\{3,5,6,7,8,9,10,11\}$ with $\gamma=3$ and $\Gamma=11$. The up and down
	 					interval decomposition is $I=(2,12)_{\Do_c}\cup[3,3]_{\Up_c}\cup[6,11]_{\Up_c}$.\\ 
						\rule{2mm}{0cm} For~$\delta=\{3,9\}$, the up and down interval decomposition of~$R_\delta$
						is of the required sub-type of~$(1,2)$. Since $(\beta,b)\cap I= (9,12) = \{10,11\}$, the 
						diagonal~$\delta$ is not associated to a unique $J\subseteq I$ and does not contribute to~$y_I$. 
						Only diagonals of the required sub-type of~$(1,2)$ that are associated to a unique $J\subseteq I$
						contribute to~$y_I$. In this example, only $\delta_4=\{3,11\}$ is of this type. }
	         \label{fig:example_cases_3a}
	      \end{minipage}
		  	$ $
			
			\rule{0mm}{9mm}
	      \end{center}
	\end{figure}
	\begin{figure}[b]
      \begin{center}
      \begin{minipage}{0.95\linewidth}
         \begin{center}
         \begin{overpic}
            [width=0.4\linewidth]{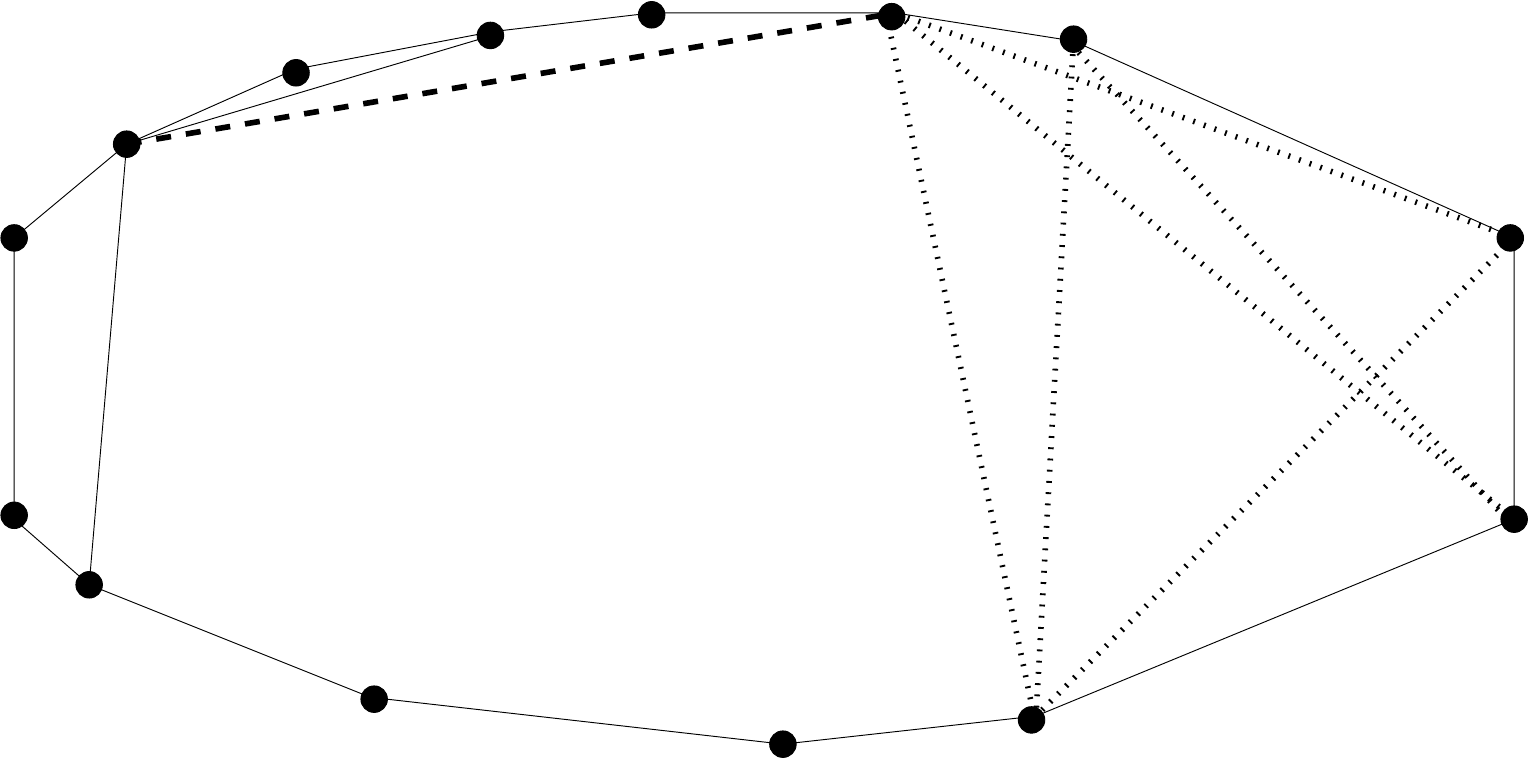}
            \put(-8,53){$0$}
            \put(-8,20){$1$}
            \put(7,9){$2$}
            \put(8,68){$3$}
            \put(23,73){$4$}
            \put(35,-3){$5$}
            \put(45,78){$6$}
            \put(62,80){$7$}
            \put(80,-10){$8$}
            \put(95,80){$9$}
            \put(110,-5){$10$}
            \put(115,77){$11$}
            \put(163,20){$12$}
            \put(163,53){$13$}
            \put(65,65){$\delta$}
	     \end{overpic}
         \end{center}
         \caption[]{Consider $I=\{3,5,6,7,8,9,10,11,12\}$ with $\gamma=3$ and $\Gamma=12$. The up and down  interval 
					decomposition is $I=(2,13)_{\Do_c}\cup[3,3]_{\Up_c}\cup[6,11]_{\Up_c}$.\\
					\rule{2mm}{0cm} For $\delta=\{3,9\}$, the up and down interval decomposition of $R_\delta$ is
					of the required sub-type of~$(1,2)$ and since $(\beta,b)\cap = I(9,13)\cap I=\{10,11,12\}\neq \varnothing$, 		
					the diagonal~$\delta$ is associated to eight sets. The contribution of~$\delta$ to~$y_I$ is 
					$-\tilde z^c_{[n]}$.\\
					\rule{2mm}{0cm} In this example, the four diagonals
					$\delta^\prime\in \left\{\{3,6\},\ \{3,7\},\ \{3,9\},\ \{3,11\}\right\}$ are of the required sub-type 
					of~$(1,2)$ each contributes $(-1)^{|I\setminus R_{\delta'}|}\tilde z_{R_{\delta'}}^c$ to $y_I$. }
         \label{fig:example_cases_3b}
      \end{minipage}
	  	$ $
		
		\rule{0mm}{5mm}
      \end{center}
	\end{figure}
	\medskip
	After this analysis for the possible contributions to~$y_I$ induced by proper diagonals, we now prove 
	\[
		y_I = \sum_{\delta\in\mathscr D_I}(-1)^{|I\setminus R_{\delta}|}\tilde z_{R_{\delta}}^c
	\]
	where we assume that $I$ is a non-empty proper subset of $[n]$ with a nested up and down decomposition and
	$|\mathscr D_I|=4$. We distinguish the following four cases: 
	\begin{compactenum}[1.]
		\item $\gamma, \Gamma\in\Do_c$.\\
			Then $\delta_1$, $\delta_2$, $\delta_3$, and $\delta_4$ contribute $(-1)^{|I\setminus R_\delta|}\tilde z_{R_{\delta}}^c$ 
			to~$y_I$ according to Case~$1$ and no other diagonal contributes according to the previous analysis. The claim follows
			immediately.
		\item $\gamma\in\Do_c$ and $\Gamma\in\Up_c$.\\
			Then $\delta_1$ and $\delta_3$ do contribute $(-1)^{|I\setminus R_\delta|}\tilde z_{R_{\delta}}^c$ to~$y_I$ according to 
			Case~$1$, while $\delta_2$ and $\delta_4$ contribute $(-1)^{|I\setminus R_\delta|}\tilde z_{R_{\delta}}^c$ to~$y_I$ 
			according to Case~\ref{case:two_b_i}. No other diagonal contributes to $y_I$. The claim follows immediately.
		\item $\gamma,\Gamma\in\Up_c$.\\
			The only diagonals with a contribution to $y_I$ are~$\delta_1$ (by Case~$1$), $\delta_2$ (by Case~\ref{case:two_b_i}),
			$\delta_3$ (by Case~\ref{case:two_a}) and $\delta_4$ (by Case~\ref{case:three_a}). Taking their contribution into account, 
			we obtain
			\[
				y_I = (-1)^{|I\setminus R_{\delta_1}|}\tilde z_{R_{\delta_1}}^c
						+ (-1)^{|I\setminus R_{\delta_2}|}(\tilde z_{R_{\delta_2}}^c-\tilde z_{[n]}^c)
						+ (-1)^{|I\setminus R_{\delta_3}|}\tilde z_{r_{\delta_3}}^c
						+ (-1)^{|I\setminus R_{\delta_4}|}(\tilde z_{R_{\delta_4}}^c-\tilde z_{[n]}^c).
			\]
			The claim follows since $I\setminus R_{\delta_2}$ and $I\setminus R_{\delta_4}$ differ by~$\gamma$.
		\item $\gamma\in\Up_c$ and $\Gamma\in\Do_c$.\\
			We distinguish the two sub-cases $\Gamma\neq n$ and $\Gamma=n$.
			\begin{compactenum}[(a)]
				\item $\Gamma\neq n$ implies that there is no $u\in\Up_c$ such that $[u,n]=[u,n]\cap I$.\\
					  In this situation, $\delta_1$ and $\delta_3$ contribute $(-1)^{|I\setminus R_\delta|}\tilde z_{R_{\delta}}^c$ 
					  to~$y_I$ according to Case~\ref{case:one} and $\delta_2$ and~$\delta_4$ contribute 
					  $(-1)^{|I\setminus R_\delta|}\tilde z_{R_{\delta}}^c$ according to Case~\ref{case:two_a}. No other 
					  diagonal contributes, so the claim follows immediately.
				\item $\Gamma=n$. \\
					  If there is no $u\in\Up_c$ such that $[u,n]=[u,n]\cap I$ then $\delta_1$ and $\delta_2$ contribute according
					  to Case~\ref{case:one} and $\delta_3$ and $\delta_4$ contribute according to Case~\ref{case:two_a}. No other
					  diagonal contributes, so the claim follows immediately. 
					
					  If there exists $u\in\Up_c$ such that $[u,n]=[u,n]\cap I$ then denote by~$u_{min}$ the smallest element 
					  of $\Up_c$ such that $[u_{min},n]=[u_{min},n]\cap I$. Now diagonals $\delta_1$ and $\delta_2$ contribute
					  to $y_I$ according to Case~\ref{case:one} and diagonals $\delta_3$, $\delta_4$ according to Case~\ref{case:two_a}.
					  But in this situation, according to Cases~\ref{case:two_b_ii} and~\ref{case:three_b}, we also have 
					  contributions of diagonals $\{a,u\}$ and $\{\gamma,u\}$ for $u\in[u_{min},u_m]_{\Up_c}$. This yields 
					  \[
						\sum_{\delta\in\mathscr D_I}(-1)^{|I\setminus R_\delta|}\tilde z_{R_\delta}
						+ \sum_{\begin{smallmatrix} \delta=\{a,\alpha\} \text{ with}\\ \
														\alpha\in [u_{min},u_m]_{\Up_c} 
								\end{smallmatrix}}
								(-1)^{|I\setminus R_\delta|}\tilde z_{[n]}^c
						+\sum_{\begin{smallmatrix} \delta=\{\gamma,\alpha\}\not\in\partial Q \text{ with}\\ 
														\alpha\in [u_{min},u_m]_{\Up_c}
								\end{smallmatrix}}
								(-1)^{|I\setminus R_\delta|}\tilde z_{[n]}^c.
					  \]
					  But the second and third sum cancel, so we end up with the claim.
			\end{compactenum}
	\end{compactenum}
\end{proof}

\noindent
In fact, the methods used in the proof of Proposition~\ref{prop:y_I_for_non-degenerate_D_I} suffice to prove the 
degenerate cases $\mathscr D_I\neq \{\delta_1,\delta_2,\delta_3,\delta_4\}$ as well. But before we try to analyse
these cases, we remark that some subsets of~$\{ \delta_1,\delta_2,\delta_3,\delta_4\}$ never form a set~$\mathscr D_I$ 
associated to some~$I\subseteq[n]$ and Coxeter element~$c$.
\begin{lem}$ $\\
	Let $n\geq 3$ and $I\subset [n]$ be non-empty with up and down interval decomposition of type~$(1,w)$. Then
	\label{lem:possible_D_I}
	\begin{compactenum}[(a)]
		\item There is no partition $[n]=\Do_c\sqcup \Up_c$ induced by a Coxeter element~$c$ and no non-empty 
			  $I\subset[n]$ such that $\mathscr D_I$ is one of the following sets:\label{lem_which_D_I_possible_a}
			  \[
			    \varnothing,\quad
				\{ \delta_2\}, \quad \{\delta_3\},\quad \{\delta_4\},\quad \{\delta_1,\delta_2\},\quad 
				\{\delta_1,\delta_3\},\quad \{\delta_2,\delta_4\},\quad\text{or}\quad \{\delta_3,\delta_4\}.
			  \]
		\item There is a partition $[n]=\Do_c\sqcup \Up_c$ induced by a Coxeter element~$c$ and a non-empty $I\subset[n]$
			  such that $\mathscr D_I$ is one of the following sets:\label{lem_which_D_I_possible_b}
			  \[
		    	\qquad\{\delta_1\},\
				\{ \delta_1,\delta_4\},\ \{\delta_2,\delta_3\},\ \{\delta_1,\delta_2,\delta_3\},\
				\{\delta_1,\delta_2,\delta_4\},\ \{\delta_1,\delta_3,\delta_4\},\ \{\delta_2,\delta_3,\delta_4\},
				\text{ or } \{\delta_1,\delta_2,\delta_3,\delta_4\}.
		  	  \]
	\end{compactenum}
\end{lem}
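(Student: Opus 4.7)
The plan is to first give, for each $\delta_i$, a simple combinatorial criterion for when $\delta_i$ is a proper diagonal of $Q_c$ purely in terms of $a,b,\gamma,\Gamma$ and the partition $\Do_c\sqcup\Up_c$, and then to use these criteria both to rule out the eight forbidden configurations in~(a) and to exhibit explicit witnesses in~(b).

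The first step is to record, by direct inspection of the two boundary paths of $Q_c$, the following: $\delta_1=\{a,b\}$ is a boundary edge iff the down interval $(a,b)_{\Do_c}$ is empty (the potential exception $\{a,b\}=\{0,n+1\}$ with $\Up_c=\varnothing$ is ruled out since it forces $I=[n]$); $\delta_2=\{a,\Gamma\}$ fails to be proper iff $\Gamma\in\Do_c$ is the unique down element of $I$ (so $a,\Gamma$ are consecutive in $\overline{\Do}_c$) or $a=0$ and $\Gamma=u_1$; the criterion for $\delta_3=\{\gamma,b\}$ is the mirror statement; and $\delta_4=\{\gamma,\Gamma\}$ fails iff $\gamma=\Gamma$, or $\gamma,\Gamma$ are consecutive in $\Do_c$, or $\gamma,\Gamma$ are consecutive in $\Up_c$.

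The structural observation for part~(a) is that if $\delta_1\notin\mathscr D_I$ then $(a,b)_{\Do_c}$ is empty, and since by Definition~\ref{defn:up_down_decomp} an up interval of $I$ must be nested in the open interval $(a,b)$ of $[n]$, the endpoints $a$ and $b$ must both lie strictly inside $\Do_c$ (neither $0$ nor $n+1$, since $(0,1)$ and $(n,n+1)$ contain no up labels). Consequently $\delta_2$ and $\delta_3$ each join a strictly interior vertex of the down path to a vertex of the up path and are automatically proper, so $\delta_1\notin\mathscr D_I$ implies $\{\delta_2,\delta_3\}\subseteq\mathscr D_I$; this at once kills $\varnothing$, $\{\delta_2\}$, $\{\delta_3\}$, $\{\delta_4\}$, $\{\delta_2,\delta_4\}$, and $\{\delta_3,\delta_4\}$. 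The two remaining forbidden sets $\{\delta_1,\delta_2\}$ and $\{\delta_1,\delta_3\}$ are exchanged by the involution $\gamma\leftrightarrow\Gamma$, $a\leftrightarrow b$, $\delta_2\leftrightarrow\delta_3$, so it suffices to rule out $\{\delta_1,\delta_2\}$: assuming $\delta_1\in\mathscr D_I$ and $\delta_4\notin\mathscr D_I$, case-split on the reason $\delta_4$ fails. If $\gamma=\Gamma$ then $I$ is a singleton, which must lie in $\Do_c$ (otherwise $\delta_1$ would fail), and a direct check gives $\mathscr D_I=\{\delta_1\}$; if $\gamma,\Gamma$ are consecutive in $\Do_c$, then $I\cap\Do_c\supseteq\{\gamma,\Gamma\}$ places an interior down label strictly between the endpoints of $\delta_2$ and of $\delta_3$, forcing both to be proper; if $\gamma,\Gamma$ are consecutive in $\Up_c$, then $\delta_1\in\mathscr D_I$ means $(a,b)_{\Do_c}$ is non-empty and the same interior-vertex argument again places both $\delta_2$ and $\delta_3$ in $\mathscr D_I$.

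For part~(b) I would simply list explicit witnesses: $\Do_c=\{1,2,3\}$ with $I=\{2\}$ realises $\{\delta_1\}$; $\Do_c=\{1,3\},\Up_c=\{2\}$ realises $\{\delta_2,\delta_3\}$ with $I=\{2\}$ and $\{\delta_1,\delta_4\}$ with $I=\{1,2\}$; $\Do_c=\{1,2,3,4\},\Up_c=\varnothing$ with $I=\{2,3\}$ realises $\{\delta_1,\delta_2,\delta_3\}$; $\Do_c=\{1,2,4\},\Up_c=\{3\}$ with $I=\{2,3\}$ realises $\{\delta_1,\delta_2,\delta_4\}$ and, by the mirror choice $\Do_c=\{1,3,4\},\Up_c=\{2\}$ with $I=\{2,3\}$, also $\{\delta_1,\delta_3,\delta_4\}$; $\Do_c=\{1,5\},\Up_c=\{2,3,4\}$ with $I=\{2,4\}$ realises $\{\delta_2,\delta_3,\delta_4\}$; and any sufficiently generic $I$ (e.g.\ $\Do_c=\{1,2,3,4,5\}$, $I=\{2,3,4\}$) realises $\{\delta_1,\delta_2,\delta_3,\delta_4\}$. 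The main obstacle I anticipate is not any individual deduction but keeping the case analysis in part~(a) disciplined: $\delta_2,\delta_3,\delta_4$ each fail for several distinct combinatorial reasons, and the argument must verify that no simultaneous failure of two of them can coexist with $\delta_1\in\mathscr D_I$ unless the third also fails, ensuring the residual $\mathscr D_I$ is never equal to one of the eight excluded subsets.
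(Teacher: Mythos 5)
Your proof is correct. A useful comparison with the paper: the paper explicitly leaves Part~(a) to the reader and devotes Section~\ref{sec_characterisation_of_possible_D_I} to Part~(b), but it establishes~(b) by proving full characterisation lemmas (Lemmas~\ref{lem_characterise_D_I=delta1} through~\ref{lem:delta2_delta3_delta4}) describing \emph{all} $I$ and $c$ realising each degenerate~$\mathscr D_I$, which is substantially more information than mere existence. You instead give minimal explicit witnesses, which is shorter and sufficient for~(b) as stated but does not supply the structural descriptions that the paper later needs in the proofs of Lemma~\ref{lem:other_special_D_I} and Lemma~\ref{lem:which_R_delta_i}; your route would therefore need to be supplemented if those later lemmas are also to be proved. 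For Part~(a), your observation that $\delta_1\notin\mathscr D_I$ forces $(a,b)_{\Do_c}=\varnothing$ and hence $a,b\in\Do_c\setminus\{0,n+1\}$, which in turn makes the mixed diagonals $\delta_2$ and $\delta_3$ automatically proper, is exactly the right structural fact and matches the opening move in the paper's proofs of Lemma~\ref{lem_characterise_D_I=delta2_delta3} and Lemma~\ref{lem:delta2_delta3_delta4}; using it to dispatch six of the eight forbidden sets at once, and then settling $\{\delta_1,\delta_2\}$ (hence $\{\delta_1,\delta_3\}$ by the order-reversing symmetry) via a clean three-way case split on why $\delta_4$ degenerates, is a tidy and complete argument. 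I verified your case analysis and all eight witnesses: they check out.
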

\noindent
The proof of Part~(\ref{lem_which_D_I_possible_a}) is left to the reader, while the situation of 
Part~(\ref{lem_which_D_I_possible_b}) is carefully discussed in Section~\ref{sec_characterisation_of_possible_D_I}.
\begin{lem}\label{lem:other_special_D_I}$ $\\
	Let $n\geq 3$ and $I\subset [n]$ be non-empty with up and down interval decomposition of type~$(1,w)$ and
	$|\mathscr D_I|<4$. Then
	\begin{compactenum}[(a)]
		\item Suppose that $I$ satisfies one of the following conditions\label{cor:other_special_D_I_first_case}
			  \begin{compactenum}[(i)]
				 \item $\mathscr D_I=\{ \delta_1\}$ (Lemma~\ref{lem_characterise_D_I=delta1}),
			  	 \item $\mathscr D_I=\{ \delta_1,\delta_3,\delta_4\}$, $(a,b)_\Do=\{\Gamma\}$, and $\gamma\in\Up_c$,
						that is, Cases (\ref{lem:delta1_delta3_delta4_b}) and (\ref{lem:delta1_delta3_delta4_c})
						of Lemma~\ref{lem:delta1_delta3_delta4},
				 \item $\mathscr D_I=\{ \delta_1,\delta_2,\delta_4\}$, $(a,b)_\Do=\{\gamma\}$, and $\Gamma\in\Up_c$,
						that is, Cases (\ref{lem:delta1_delta2_delta4_b}) and (\ref{lem:delta1_delta2_delta4_c})
						of Lemma~\ref{lem:delta1_delta2_delta4},
				 \item $\mathscr D_I=\{ \delta_1,\delta_2,\delta_3\}$ and $(a,b)_\Do=\{\gamma,\Gamma\}$, 
						that is, Case (\ref{lem:delta1_delta2_delta3_a}) of Lemma~\ref{lem:delta1_delta2_delta3}, or
				 \item $\mathscr D_I=\{ \delta_2,\delta_3,\delta_4\}$ and $(a,b)_\Do=\varnothing$,
						that is, Lemma \ref{lem:delta2_delta3_delta4}
			  \end{compactenum}
			  Then the Minkowski coefficient~$y_I$ of $\As^c_{n-1}$ is
			  \[
				y_I = \sum_{\delta\in\mathscr D_I}(-1)^{|I\setminus R_{\delta}|}z_{R_{\delta}}.
			  \]
		\item Suppose that $I$ satisfies one of the following conditions\label{cor:other_special_D_I_second_case}
			  \begin{compactenum}[(i)]
				 \item $\mathscr D_I=\{\delta_1,\delta_4\}$, that is, Cases (\ref{delta1_delta4_case_a}) and (\ref{delta1_delta4_case_b})
				 		of Lemma \ref{lem_characterise_D_I=delta1_delta4},
				 \item $\mathscr D_I=\{\delta_2,\delta_3\}$, that is, Case (\ref{delta2_delta3_a}) and (\ref{delta2_delta3_b}) 
						of Lemma \ref{lem_characterise_D_I=delta2_delta3},
			  	 \item $\mathscr D_I=\{ \delta_1,\delta_3,\delta_4\}$ and $\bigcup_{i=1}^k [\alpha_i,\beta_i]_{\Up_c}=\{\Gamma\}$,	
						that is, Case (\ref{lem:delta1_delta3_delta4_a}) of Lemma~\ref{lem:delta1_delta3_delta4},
				 \item $\mathscr D_I=\{ \delta_1,\delta_2,\delta_4\}$ and $\bigcup_{i=1}^k [\alpha_i,\beta_i]_{\Up_c}=\{\gamma\}$,
						that is, Case (\ref{lem:delta1_delta2_delta4_a}) of Lemma~\ref{lem:delta1_delta2_delta4},
				 \item $\mathscr D_I=\{ \delta_1,\delta_2,\delta_3\}$ and 
						$\bigcup_{i=1}^k [\alpha_i,\beta_i]_{\Up_c}=\{\gamma,\Gamma\}$, that is, Case 
						(\ref{lem:delta1_delta2_delta3_b}) of Lemma \ref{lem:delta1_delta2_delta3}.
			  \end{compactenum}
			  Then the Minkowski coefficient~$y_I$ of $\As^c_{n-1}$ is
			  \[
				y_I = (-1)^{|\{\gamma,\Gamma\}|}z_{[n]} + \sum_{\delta\in\mathscr D_I}(-1)^{|I\setminus R_{\delta}|}z_{R_{\delta}}.
			  \]
	\end{compactenum}
\end{lem}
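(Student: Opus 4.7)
The plan is to rerun the contribution analysis from the proof of Proposition~\ref{prop:y_I_for_non-degenerate_D_I} in each of the eight non-empty configurations of $\mathscr D_I$ enumerated in Lemma~\ref{lem:possible_D_I}(\ref{lem_which_D_I_possible_b}), using the characterisation sub-lemmas of Section~\ref{sec_characterisation_of_possible_D_I} to pin down the exact combinatorial shape of $I$: which of $a, b, \gamma, \Gamma$ lie in $\Do_c$ versus $\Up_c$, whether $(a,b)_{\Do_c}$ is empty or a singleton, and which of the $\delta_i$ are non-proper or degenerate. Equation~(\ref{horrible_equation}) is then reorganised by collecting, for each proper diagonal $\delta$ that appears as some $\delta^J_{i,j}$ with $J\subseteq I$, the sum of its signed coefficients; Cases~\ref{case:one}, \ref{case:two_a}, \ref{case:two_b_i}, \ref{case:two_b_ii}, \ref{case:three_a}, and \ref{case:three_b} of the original proof identify which such $\delta$ contribute non-trivially.

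For part (\ref{cor:other_special_D_I_first_case}), I would verify in each of the listed configurations that every $\delta_i\notin\mathscr D_I$ has both endpoints in $\overline{\Do}_c$, so the extended convention gives $R_{\delta_i}=\varnothing$ and $z^c_{R_{\delta_i}}=0$. Padding the target formula by these invisible zero summands produces exactly $\sum_{\delta\in\{\delta_1,\delta_2,\delta_3,\delta_4\}}(-1)^{|I\setminus R_\delta|}z^c_{R_\delta}$, so it suffices to check that the rerun of the case analysis--with references to $\delta_i$ read as proper diagonals only when they actually are--matches the total from the non-degenerate scenario. The steps that relied on a non-proper $\delta_i$ being of some specific type become vacuous, since no $J\subseteq I$ has $\delta_i$ as an associated proper diagonal, which is consistent with those missing terms being zero.

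For part (\ref{cor:other_special_D_I_second_case}), the absent diagonals each have at least one endpoint in $\Up_c$, and the situation is more delicate: the extra $(-1)^{|\{\gamma,\Gamma\}|}z_{[n]}$ does not appear simply by extending the sum. Instead, it arises from residual $z^c_{[n]}$-contributions produced by Cases~\ref{case:two_b_ii} and \ref{case:three_b} that no longer find cancellation partners, because the partnering diagonals are themselves non-proper. For each of the five sub-cases I would re-examine Cases~\ref{case:two_b_i}--\ref{case:three_b} with the reduced $\mathscr D_I$ in mind, identify the surviving $z^c_{[n]}$-terms, and verify that their signed sum equals $(-1)^{|\{\gamma,\Gamma\}|}z_{[n]}$; here $|\{\gamma,\Gamma\}|=1$ exactly when $I=\{u_s\}\subseteq\Up_c$, and $|\{\gamma,\Gamma\}|=2$ otherwise.

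The main obstacle is the sheer case count and the need to track residual $z^c_{[n]}$-contributions in part (\ref{cor:other_special_D_I_second_case}), where the cancellation structure of the original proof is disrupted. Fortunately, the characterisation lemmas in Section~\ref{sec_characterisation_of_possible_D_I} are restrictive enough that in every admissible degenerate sub-case only a bounded (and enumerable) number of $z^c_{[n]}$-terms survive, and they consistently combine to give the $(-1)^{|\{\gamma,\Gamma\}|}z_{[n]}$ correction. Once this bookkeeping is laid out, no new ideas beyond those in the proof of Proposition~\ref{prop:y_I_for_non-degenerate_D_I} are required, consistent with the remark that the details can reasonably be elided.
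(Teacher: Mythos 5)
Your overall strategy---rerunning Cases~\ref{case:one}--\ref{case:three_b} of the proof of Proposition~\ref{prop:y_I_for_non-degenerate_D_I} in each of the degenerate sub-cases pinned down by Section~\ref{sec_characterisation_of_possible_D_I}---is the same as the paper's, and your handling of part~(\ref{cor:other_special_D_I_first_case}) is correct (it recovers the observation made separately in Lemma~\ref{lem:which_R_delta_i}(a)). However, your sketch of part~(\ref{cor:other_special_D_I_second_case}) rests on two factual claims that fail and would send the bookkeeping astray. First, it is not true that every $\delta_i\notin\mathscr D_I$ has an endpoint in $\Up_c$: in Case~(\ref{delta1_delta4_case_a}) of Lemma~\ref{lem_characterise_D_I=delta1_delta4} the missing diagonals are $\delta_2=\{0,u_1\}$ and $\delta_3=\{1,d_2\}$, and $\delta_3$ has both endpoints in $\overline{\Do}_c$ (so $R_{\delta_3}=\varnothing$); likewise $\delta_1$ in Lemma~\ref{lem_characterise_D_I=delta2_delta3} has both endpoints in $\Do_c$. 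As Lemma~\ref{lem:which_R_delta_i}(b) records, exactly one absent diagonal has $R_\delta=[n]$ and the rest have $R_\delta=\varnothing$.

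Second, and more consequentially, attributing the surviving $z^c_{[n]}$ to Cases~\ref{case:two_b_ii} and~\ref{case:three_b} is wrong in most of the sub-cases. In Case~(\ref{delta2_delta3_a}) of Lemma~\ref{lem_characterise_D_I=delta2_delta3}, for example, one is in the $\gamma,\Gamma\in\Up_c$ branch of the final analysis: $\delta_2$ contributes $(-1)^{|I\setminus R_{\delta_2}|}(z^c_{R_{\delta_2}}-z^c_{[n]})$ via Case~\ref{case:two_b_i}, and its cancellation partner in the non-degenerate proof is $\delta_4$ via Case~\ref{case:three_a}; here $\delta_4=\{u_s,u_s\}$ is degenerate, so the $-z^c_{[n]}$ survives and gives $(-1)^{|\{\gamma,\Gamma\}|}z^c_{[n]}$. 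Nothing comes out of Cases~\ref{case:two_b_ii} or~\ref{case:three_b} in that branch, so your proposed audit would find no residual at all. The same happens in Cases~(\ref{delta1_delta4_case_a}), (\ref{delta2_delta3_b}), (\ref{lem:delta1_delta3_delta4_a}), and~(\ref{lem:delta1_delta2_delta3_b}): the uncancelled $z^c_{[n]}$ comes from the broken $\delta_2$--$\delta_4$ pairing in Cases~\ref{case:two_b_i}/\ref{case:three_a}. Only when $\gamma\in\Up_c$, $\Gamma=n$ (Cases~(\ref{delta1_delta4_case_b}) and~(\ref{lem:delta1_delta2_delta4_a})) does the residual indeed arise from the unbalanced Case~\ref{case:two_b_ii}/\ref{case:three_b} sums, because the diagonal $\{\gamma,u_m\}$ becomes degenerate. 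To make the proof work you must trace the broken cancellation in each of the $\gamma,\Gamma$-configurations ($\gamma\in\Do_c,\Gamma\in\Up_c$; $\gamma,\Gamma\in\Up_c$; $\gamma\in\Up_c,\Gamma\in\Do_c$ with and without $\Gamma=n$) rather than restricting attention to Cases~\ref{case:two_b_ii} and~\ref{case:three_b}.
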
 
\begin{proof}
	The proof of the claim is a study of the $14$ mentioned cases that characterise the non-empty proper subsets 
	$I \subset [n]$ with $\mathscr D_I\neq \{\delta_1,\delta_2,\delta_3,\delta_4\}$. These $14$ cases are described in 
	detail in Section~\ref{sec_characterisation_of_possible_D_I}, the proofs are along the lines of the proof of
	Proposition~\ref{prop:y_I_for_non-degenerate_D_I}.
\end{proof}

\begin{lem}\label{lem:which_R_delta_i}$ $\\
	For $n\geq 3$, let $I$ be non-empty proper subset of~$[n]$ with up and down interval decomposition of type~$(1,w)$
	and $|\mathscr D_I|<4$.
	\begin{compactenum}[(a)]
		\item In all cases of Part~(\ref{cor:other_special_D_I_first_case}) of Lemma~\ref{lem:other_special_D_I}
			  we have $R_\delta=\varnothing$ if $\delta\in\{ \delta_1,\delta_2,\delta_3,\delta_4\}\setminus \mathscr D_I$. Thus
			  \[
				y_I = \sum_{i=1}^4(-1)^{|I\setminus R_{\delta_i}|}z_{R_{\delta_i}}.
			  \]
		\item In all cases of Part~(\ref{cor:other_special_D_I_second_case}) of Lemma~\ref{lem:other_special_D_I}
			  there is precisely one $\delta\in\{ \delta_1,\delta_2,\delta_3,\delta_4\}\setminus \mathscr D_I$
			  with $R_{\delta}=[n]$:
			  \begin{compactenum}[(i)]
				\item $R_{\delta_2}=[n]$ in Case~(\ref{delta1_delta4_case_a}) of Lemma~\ref{lem_characterise_D_I=delta1_delta4} 
					  and Case~(\ref{lem:delta1_delta3_delta4_a}) of Lemma~\ref{lem:delta1_delta3_delta4} and we have\\[2mm]
					  \centerline{$
						y_I = (-1)^{|I\setminus R_{\delta_2}|}z_{R_{\delta_2}} + \sum_{\delta\in\mathscr D_I}(-1)^{|I\setminus R_{\delta}|}z_{R_{\delta}}
						    = \sum_{i=1}^4(-1)^{|I\setminus R_{\delta_i}|}z_{R_{\delta_i}}.
					  $}\\[-1mm]
				\item $R_{\delta_3}=[n]$ in Case~(\ref{delta1_delta4_case_b}) of Lemma~\ref{lem_characterise_D_I=delta1_delta4} and 
					  Case~(\ref{lem:delta1_delta2_delta4_a}) of Lemma~\ref{lem:delta1_delta2_delta4} and we have\\[2mm]
					  \centerline{$
						y_I = (-1)^{|I\setminus R_{\delta_3}|}z_{R_{\delta_3}} + \sum_{\delta\in\mathscr D_I}(-1)^{|I\setminus R_{\delta}|}z_{R_{\delta}}
						    = \sum_{i=1}^4(-1)^{|I\setminus R_{\delta_i}|}z_{R_{\delta_i}}.
					  $}\\[-1mm]
				\item $R_{\delta_4}=[n]$ in both cases of Lemma~\ref{lem_characterise_D_I=delta2_delta3} and in 
					  Case~(\ref{lem:delta1_delta2_delta3_b}) of Lemma~\ref{lem:delta1_delta2_delta3} and we have\\[2mm]
					  \centerline{$
						y_I = (-1)^{|I\setminus R_{\delta_4}|+|\{\gamma,\Gamma\}|}z_{R_{\delta_4}} + \sum_{\delta\in\mathscr D_I}(-1)^{|I\setminus R_{\delta}|}z_{R_{\delta}}.
					  $}\\[-1mm]
			  \end{compactenum}
			  Moreover, we have $\gamma\neq\Gamma$ except for Case~(\ref{delta2_delta3_a}) of Lemma~\ref{lem_characterise_D_I=delta2_delta3}
			  when $\gamma=\Gamma\in\Up_c$.
	\end{compactenum}
\end{lem}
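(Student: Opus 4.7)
The plan is to take the formulas in Lemma~\ref{lem:other_special_D_I} and rewrite each sum over $\mathscr{D}_I$ as a sum over all four diagonals $\delta_1,\delta_2,\delta_3,\delta_4$, using the extended definitions of $R_\delta$ and $z^c_{R_\delta}$ for the non-proper members of $\{\delta_1,\delta_2,\delta_3,\delta_4\}\setminus\mathscr{D}_I$. Each $\delta_i$ has one endpoint in $\{a,b\}\subseteq\overline{\Do}_c$ and the other in $\{\gamma,\Gamma\}\subseteq I$, so whenever $\delta_i$ is non-proper its second endpoint determines whether $R_{\delta_i}=\varnothing$ (second endpoint in $\Do_c$, giving $z^c_{R_{\delta_i}}=0$) or $R_{\delta_i}=[n]$ (second endpoint in $\Up_c$, giving $z^c_{R_{\delta_i}}=z^c_{[n]}$).

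For Part~(a), I would check in each of the five listed cases that the hypotheses force $\gamma,\Gamma\in\Do_c$ whenever the corresponding $\delta_i$ is missing from $\mathscr{D}_I$. The characterizations from Section~\ref{sec_characterisation_of_possible_D_I} provide this directly: for example, when $\mathscr{D}_I=\{\delta_1,\delta_2,\delta_3\}$ with $(a,b)_{\Do}=\{\gamma,\Gamma\}$, both $\gamma$ and $\Gamma$ lie in $\Do_c$, so the missing $\delta_4=\{\gamma,\Gamma\}$ has both endpoints in $\overline{\Do}_c$ and $R_{\delta_4}=\varnothing$. Adding the zero summands obtained in this way to the formula of Lemma~\ref{lem:other_special_D_I}(a) yields the uniform expression $y_I=\sum_{i=1}^{4}(-1)^{|I\setminus R_{\delta_i}|}z^c_{R_{\delta_i}}$.

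For Part~(b), an analogous case analysis shows that in each subcase exactly one non-proper $\delta_i$ carries a second endpoint in $\Up_c$, hence exactly one satisfies $R_{\delta_i}=[n]$; its identity as $\delta_2$, $\delta_3$, or $\delta_4$ is read off from the defining hypothesis of each case. In subcases~(i) and~(ii) the diagonal $\delta_4\in\mathscr{D}_I$ is proper, which forces $\gamma\neq\Gamma$ and thus $(-1)^{|\{\gamma,\Gamma\}|}=1$; combined with $|I\setminus[n]|=0$, the term $(-1)^{|\{\gamma,\Gamma\}|}z^c_{[n]}$ of Lemma~\ref{lem:other_special_D_I}(b) becomes precisely the missing summand $(-1)^{|I\setminus R_{\delta_j}|}z^c_{R_{\delta_j}}$ for $j\in\{2,3\}$, giving the claimed uniform formula. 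In subcase~(iii), $R_{\delta_4}=[n]$ gives $|I\setminus R_{\delta_4}|=0$, so the sign $(-1)^{|\{\gamma,\Gamma\}|}$ from Lemma~\ref{lem:other_special_D_I}(b) is exactly the extra factor $(-1)^{|I\setminus R_{\delta_4}|+|\{\gamma,\Gamma\}|}$ that appears in the stated formula.

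The main obstacle is the parity bookkeeping in Part~(b): one must confirm via the classification of Section~\ref{sec_characterisation_of_possible_D_I} that precisely one absent $\delta_i$ carries a $\Up_c$-endpoint while the others still have both endpoints in $\overline{\Do}_c$, and also verify that $\gamma\neq\Gamma$ in every subcase except the exceptional one where $\mathscr{D}_I=\{\delta_2,\delta_3\}$ and $I=\{u_s\}\subseteq\Up_c$. This dichotomy is the content of the final remark of the lemma, and is what makes the split into the two displayed formulas in (i)--(ii) versus (iii) correct as stated.
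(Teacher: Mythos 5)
Your proposal is correct and takes essentially the same approach as the paper: part~(a) is the trivial addition of vanishing summands, and part~(b) is the sign verification $(-1)^{|\{\gamma,\Gamma\}|}=(-1)^{|I\setminus R_{\delta_j}|}$ (respectively with the extra $|\{\gamma,\Gamma\}|$ shift for $\delta_4$) read off from the classification in Section~\ref{sec_characterisation_of_possible_D_I}. One small imprecision: your opening claim that ``each $\delta_i$ has one endpoint in $\{a,b\}$ and the other in $\{\gamma,\Gamma\}$'' is literally false for $\delta_1$ (both endpoints in $\{a,b\}$) and $\delta_4$ (both in $\{\gamma,\Gamma\}$); the rule for deciding $R_{\delta_4}\in\{\varnothing,[n]\}$ in fact depends on whether \emph{either} of $\gamma,\Gamma$ lies in $\Up_c$, which just happens to coincide with your ``second endpoint'' shortcut in every case arising here because the relevant cases always have $\gamma,\Gamma$ both in $\Do_c$ or both in $\Up_c$.
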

\begin{proof}
	The first case is trivial, since we only add vanishing terms to $\sum_{\delta\in\mathscr D_I}(-1)^{|I\setminus R_{\delta}|}z_{R_{\delta}}$.
	
	\medskip
	The second case is a bit more involved. First observe that $\gamma\neq\Gamma$ except for Case~(\ref{delta2_delta3_a}) of 
	Lemma~\ref{lem_characterise_D_I=delta2_delta3} when $\gamma=\Gamma\in\Up_c$. Now, using the description given in 
	Section~\ref{sec_characterisation_of_possible_D_I}, it is straighforward to check 
	$(-1)^{|\{\gamma,\Gamma\}|}=(-1)^{|I\setminus R_{\delta_2}|}$ for the first subcase, 
	$(-1)^{|\{\gamma,\Gamma\}|}=(-1)^{|I\setminus R_{\delta_3}|}$ for the second subcase and 
	$(-1)^{|\{\gamma,\Gamma\}|}=(-1)^{|I\setminus R_{\delta_4}|+|\{\gamma,\Gamma\}|}$
	for the third subcase.
\end{proof}

\begin{lem}$ $\\
	Let $I$ be non-empty subset of~$[n]$ with up and down interval decomposition of type~$(1,w)$. Then
	\[
		(-1)^{|I\setminus R_{\delta_1}|}=(-1)^{|I\setminus R_{\delta_2}|+1}
										=(-1)^{|I\setminus R_{\delta_3}|+1}
										=(-1)^{|I\setminus R_{\delta_4}|+|\{\gamma,\Gamma\}|}.
	\]\label{lem_relate_signs}
\end{lem}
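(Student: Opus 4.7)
The plan is to compute $|I \cap R_{\delta_j}|$ for each $j \in \{1,2,3,4\}$ explicitly and then compare parities. Write $D = (a,b)_{\Do_c}$ for the down-part of $I$ and $U = \bigsqcup_{i=1}^{k}[\alpha_i,\beta_i]_{\Up_c}$ for the up-part, so that $I = D \sqcup U$ and $D = R_{\delta_1}$; in particular $|I \setminus R_{\delta_1}| = |U|$, which fixes the reference parity. The three remaining quantities will be computed from $|U|$ via a uniform endpoint-move rule depending on where $\gamma$ and $\Gamma$ sit.

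The key observation is the following endpoint-move rule: replacing the right endpoint $b$ of $\delta_1$ by $\Gamma$ to form $\delta_2$ shifts $|I \cap R_\delta|$ by $-1$ when $\Gamma \in \Do_c$ and by $+1$ when $\Gamma \in \Up_c$, and analogously at the left endpoint for $\delta_3$. For $\Gamma \in \Do_c$, Example~\ref{expl:types_of_diagonals}(a) gives $R_{\delta_2} = (a,\Gamma)_{\Do_c} = D \setminus \{\Gamma\}$, because $\Gamma = \max I$ is the largest $\Do_c$-element of $(a,b)$; so $|I \cap R_{\delta_2}| = |D|-1$. For $\Gamma \in \Up_c$, Example~\ref{expl:types_of_diagonals}(b) gives $R_{\delta_2} = (a,n+1)_{\Do_c} \cup [\Gamma,u_m]_{\Up_c}$, whose intersection with $I$ is precisely $D \cup \{\Gamma\}$ since $\Gamma = \max U$; so $|I \cap R_{\delta_2}| = |D|+1$. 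The analogous computation applies to $\delta_3$. For $\delta_4 = \{\gamma,\Gamma\}$ with $\gamma \neq \Gamma$ the two endpoint moves compose, and a case split using the $(1,0)$/$(1,1)$/$(1,2)$ classification of Example~\ref{expl:types_of_diagonals} (together with the extended definition $R_\delta = [n]$ in the boundary-edge subcases such as $\delta_4 = \{u_i,u_{i+1}\}$) yields $|I \cap R_{\delta_4}| \in \{|D|-2,|D|,|D|+2\}$. Consequently $|I \setminus R_{\delta_2}|$ and $|I \setminus R_{\delta_3}|$ differ from $|U|$ by an odd number while $|I \setminus R_{\delta_4}|$ differs by an even number, and since $|\{\gamma,\Gamma\}| = 2$ in this generic case all four parities agree.

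The singular case $\gamma = \Gamma$ requires a separate check: here $I$ is a singleton, $|\{\gamma,\Gamma\}| = 1$, and the diagonal $\delta_4 = \{\gamma,\gamma\}$ is degenerate. If $\gamma \in \Do_c$ the extended definition gives $R_{\delta_4} = \varnothing$, so $|U| = 0$ and $|I \setminus R_{\delta_4}| = 1$, whence $(-1)^{1+1} = (-1)^{|U|}$; if $\gamma \in \Up_c$ it gives $R_{\delta_4} = [n]$, so $|U| = 1$ and $|I \setminus R_{\delta_4}| = 0$, whence $(-1)^{0+1} = (-1)^{|U|}$. The main obstacle is the case bookkeeping: one must verify several boundary subcases in which some $\delta_j$ collapses to a non-proper diagonal (for example $a = 0$, $\Gamma = u_1$, or $b = n+1$, $\gamma = u_m$), but the extended definition $R_\delta \in \{\varnothing, [n]\}$ is set up precisely so that the shift in $|I \cap R_{\delta_j}|$ predicted by the endpoint-move rule remains valid, and the parity computation goes through uniformly.
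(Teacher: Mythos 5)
Your proof is correct and follows essentially the same route as the paper: in both cases one shows that $R_{\delta_j}\cap I$ differs from $R_{\delta_1}\cap I$ by adding or removing $\gamma$ and/or $\Gamma$ according to whether each lies in $\Do_c$ or $\Up_c$, so the cardinalities shift by $\pm 1$ for $\delta_2,\delta_3$ and by $0$ or $\pm 2$ for $\delta_4$ (with the degenerate $\gamma=\Gamma$ case absorbed by the summand $|\{\gamma,\Gamma\}|$). The paper records the set identities directly, while you phrase it as an endpoint-move rule on cardinalities; the content is the same.
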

\begin{proof}
	The claim for~$\delta_2$ follows from 
	\[
		R_{\delta_2}\cap I = \begin{cases}
								(R_{\delta_1}\cap I)\sqcup\{\Gamma\}, 	& \Gamma\in\Up_c,\\
								(R_{\delta_1}\cap I)\setminus\{\Gamma\}	& \Gamma\in\Do_c.
		\end{cases}
	\]
	The case for~$\delta_3$ is similar. For~$\delta_4$ we have to consider
	\[
		R_{\delta_4}\cap I = \begin{cases}
								(R_{\delta_1}\cap I)\sqcup\{\Gamma,\gamma\}							& \gamma,\Gamma\in\Up_c\\
								\left((R_{\delta_1}\cap I)\sqcup\{\gamma\}\right)\setminus\{\Gamma\}	& \gamma\in\Up_c,\ \Gamma\in\Do_c\\
								\left((R_{\delta_1}\cap I)\sqcup\{\Gamma\}\right)\setminus\{\gamma\}	& \Gamma\in\Up_c,\ \gamma\in\Do_c\\
								(R_{\delta_1}\cap I)\setminus\{\Gamma,\gamma\}						& \gamma,\Gamma\in\Do_c.
							 \end{cases}
	\]
\end{proof}
\medskip
\noindent
We combine Proposition~\ref{prop:y_I_for_non-degenerate_D_I}, Lemma~\ref{lem:which_R_delta_i}, and Lemma~\ref{lem_relate_signs} to 
obtain Theorem~\ref{thm:first_y_I_thm} if $I\subset [n]$ has an up and down interval decomposition of type~$(1,w)$:
\begin{cor}\label{cor:the_case_v=1}$ $\\
	Let $I$ be non-empty proper subset of~$[n]$ with up and down interval decomposition of type~$(1,w)$
	and $\mathscr D_I\subseteq \{ \delta_1,\delta_2,\delta_3,\delta_4\}$. Then
	\[
	y_I = (-1)^{|I\setminus R_{\delta_1}|}
			\left(
				z_{R_{\delta_1}}^c-z_{R_{\delta_2}}^c-z_{R_{\delta_3}}^c+z_{R_{\delta_4}}^c
			\right).
	\]
\end{cor}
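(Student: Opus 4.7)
The plan is to split into two cases according to whether $\mathscr D_I$ is the full set $\{\delta_1,\delta_2,\delta_3,\delta_4\}$ or a proper subset of it. In both cases I will first rewrite $y_I$ as a sum of four terms indexed by $\delta_1,\ldots,\delta_4$ (using the extension of $R_\delta$ to non-proper and degenerate diagonals introduced just before Theorem~\ref{thm:first_y_I_thm}), and then invoke Lemma~\ref{lem_relate_signs} to factor out $(-1)^{|I\setminus R_{\delta_1}|}$.

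In the first case, with $\mathscr D_I=\{\delta_1,\delta_2,\delta_3,\delta_4\}$, the diagonal $\delta_4=\{\gamma,\Gamma\}$ is proper and hence non-degenerate, so $\gamma\neq\Gamma$ and $|\{\gamma,\Gamma\}|=2$. Proposition~\ref{prop:y_I_for_non-degenerate_D_I} then yields
\[
    y_I = \sum_{i=1}^4(-1)^{|I\setminus R_{\delta_i}|}z^c_{R_{\delta_i}},
\]
and Lemma~\ref{lem_relate_signs} produces inner signs $+,-,-,+$ after factoring out $(-1)^{|I\setminus R_{\delta_1}|}$ (the last $+$ coming from $(-1)^{|\{\gamma,\Gamma\}|}=1$), matching the claim.

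In the second case, $\mathscr D_I\subsetneq\{\delta_1,\delta_2,\delta_3,\delta_4\}$, and Lemma~\ref{lem:which_R_delta_i} does the main bookkeeping. In the situations covered by its part~(a), every missing $\delta_i$ satisfies $R_{\delta_i}=\varnothing$ and thus $z^c_{R_{\delta_i}}=0$, so the identity $y_I=\sum_{i=1}^4(-1)^{|I\setminus R_{\delta_i}|}z^c_{R_{\delta_i}}$ still holds and the argument reduces to the first case. In the situations covered by its part~(b), exactly one missing $\delta_i$ has $R_{\delta_i}=[n]$; Lemma~\ref{lem:which_R_delta_i} tells us which index $i\in\{2,3,4\}$ is missing, and in subcases~(i) and (ii) the given formula is again $\sum_{i=1}^4(-1)^{|I\setminus R_{\delta_i}|}z^c_{R_{\delta_i}}$. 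In subcase~(iii) the coefficient of $z^c_{R_{\delta_4}}$ carries an additional factor $(-1)^{|\{\gamma,\Gamma\}|}$, which by the $\delta_4$-part of Lemma~\ref{lem_relate_signs} is exactly the factor needed so that this coefficient becomes $+(-1)^{|I\setminus R_{\delta_1}|}$ after factoring. In every subcase Lemma~\ref{lem_relate_signs} then produces the desired expression.

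The main obstacle is not conceptual but purely sign bookkeeping: one must verify that the conventions $R_\delta\in\{\varnothing,[n]\}$ for non-proper and degenerate diagonals interact correctly with Lemma~\ref{lem_relate_signs}, and in particular that the single exceptional configuration $\gamma=\Gamma\in\Up_c$ (which occurs only in subcase~(iii)) is neutralized precisely by the $(-1)^{|\{\gamma,\Gamma\}|}$ summand in that lemma. Once these sign identities are confirmed, the corollary follows uniformly in all cases.
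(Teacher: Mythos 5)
Your proposal is correct and matches the paper's own argument, which states the corollary immediately after the one-line remark that it follows from Proposition~\ref{prop:y_I_for_non-degenerate_D_I}, Lemma~\ref{lem:which_R_delta_i}, and Lemma~\ref{lem_relate_signs}; you are simply supplying the sign bookkeeping that the paper leaves implicit. The one small imprecision is that ``reduces to the first case'' in your part~(a) discussion would also need to handle $\gamma=\Gamma\in\Do_c$ (e.g.\ $\mathscr D_I=\{\delta_1\}$), but since the corresponding $z^c_{R_{\delta_i}}$ vanish there, the sign of that term is immaterial and your conclusion holds.
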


The techniques to prove Proposition~\ref{prop:y_I_for_non-degenerate_D_I} also enable us to compute the Minkowski 
coefficient~$y_I$ of~ $\As^c_{n-1}$ if the up and down interval decomposition of~$I$ is of type~$(v,w)$, $v>1$, 
and $I\neq [n]$. 

\begin{lem}\label{lem_y_I_for_multiple_components}$ $\\
	Let $I$ be a non-empty proper subset of~$[n]$ with up and down interval decomposition of type~$(v,w)$ with $v>1$.
	Then $y_I=0$ for the Minkowski coefficient of of~$\As^c_{n-1}$.
\end{lem}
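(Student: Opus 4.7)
The plan is to show that the tight-value function $z^c$ of Proposition~\ref{prop_tight_values_for_z_I} is additive across the nested components of $I$, and then invoke the standard product-vanishing of Möbius inversion on a Boolean lattice. Write $I = I_1 \sqcup \cdots \sqcup I_v$ for the decomposition of $I$ into its $v$ nested components, and for any $J \subseteq I$ set $J_k := J \cap I_k$.

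The first step is a separation property: between any two consecutive nested components $I_k$ and $I_{k+1}$ lies some $d_r \in \Do_c$ with $d_r \notin I$. This follows directly from the construction in Definition~\ref{defn:up_down_decomp}, because consecutive non-empty maximal down intervals of $I$ are separated by at least one absent $d_r \in \Do_c \setminus I$, and the endpoints of any empty down interval appearing as a nested component similarly lie in $\Do_c \setminus I$.

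The main combinatorial step is the claim that for every $J \subseteq I$, each nested component of $J$ lies entirely in some $I_k$. Suppose otherwise: some nested component $C$ of $J$, with down interval $(a,b)_{\Do_c}$, contains $x \in I_k$ and $y \in I_m$ with $k < m$ and $x < y$. By separation there is some $d_r \in \Do_c \setminus I$ with $a \leq x < d_r < y \leq b$. If $(a,b)_{\Do_c}$ is non-empty, then it contains every $d \in \Do_c$ with $a < d < b$, so $d_r \in (a,b)_{\Do_c} \subseteq C \subseteq I$, contradicting $d_r \notin I$; if $(a,b)_{\Do_c}$ is empty then $a$ and $b$ are consecutive in $\overline{\Do}_c$, contradicting the existence of such a $d_r$ strictly between. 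The formula of Proposition~\ref{prop_tight_values_for_z_I} is then additive across the $I_k$'s:
\[
z_J^c = \sum_{k=1}^v z_{J_k}^c,
\]
under the convention $z_\varnothing^c := 0$.

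Substituting into $y_I = \sum_{J\subseteq I}(-1)^{|I\setminus J|}z_J^c$ and decomposing $J = J_1 \sqcup \cdots \sqcup J_v$ with signs $(-1)^{|I \setminus J|} = \prod_j (-1)^{|I_j \setminus J_j|}$, the sum factorises as
\[
y_I = \sum_{k=1}^v \left(\sum_{J_k \subseteq I_k}(-1)^{|I_k \setminus J_k|} z_{J_k}^c\right) \prod_{j \neq k}\left(\sum_{J_j \subseteq I_j}(-1)^{|I_j \setminus J_j|}\right).
\]
Since $v > 1$ and every $I_j$ is non-empty, each summand carries at least one inner factor $\sum_{J_j \subseteq I_j}(-1)^{|I_j \setminus J_j|} = (1-1)^{|I_j|} = 0$, so $y_I = 0$. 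The main obstacle is the no-crossing claim for nested components; once that is in place the argument reduces to the familiar vanishing of Möbius coefficients under a product decomposition of the Boolean lattice $2^I = 2^{I_1} \times \cdots \times 2^{I_v}$.
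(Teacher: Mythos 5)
Your proof is correct, but it takes a genuinely different route from the paper's. The paper reuses the diagonal-by-diagonal cancellation machinery from the proof of Proposition~\ref{prop:y_I_for_non-degenerate_D_I}: it observes that every proper diagonal $\delta$ with $\delta\in\mathcal D_J$ for some $J\subseteq I$ fits inside a single nested component of $I$, and then argues that because $v>1$ one can freely toggle elements of the \emph{other} nested components without disturbing the contribution of $\delta$, so the alternating sum of all terms involving $\tilde z^c_{R_\delta}$ cancels. You instead reorganise the entire computation: you prove that the tight-value function $z^c$ of Proposition~\ref{prop_tight_values_for_z_I} is additive across nested components, $z^c_J=\sum_k z^c_{J\cap I_k}$, and then factor the M\"obius inversion over the product $2^I\cong 2^{I_1}\times\cdots\times 2^{I_v}$, so that each summand acquires at least one vanishing factor $(1-1)^{|I_j|}$. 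Both arguments rest on the same geometric fact (between consecutive nested components of $I$ sits a letter of $\Do_c\setminus I$, so nested components of subsets cannot straddle that gap), but yours packages it as a lattice-product vanishing rather than as a per-diagonal sign cancellation. The gain of your approach is that it completely avoids re-invoking the casework of Section~\ref{sec:proof_main_thm}; the paper's version is shorter on the page precisely because that casework is already available. One small bookkeeping point worth making explicit, which you use implicitly when writing $z^c_J=\sum_k z^c_{J_k}$: not only does every nested component of $J$ lie inside some $I_k$ (your argument), but conversely every nested component of $J_k$ is inclusion-maximal in $J$ as well, so the nested components of $J$ are exactly the disjoint union of the nested components of the $J_k$'s; this follows from your containment claim applied to a hypothetical strictly larger nested subset, but it deserves a sentence.
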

\begin{proof}
	For every proper diagonal~$\delta=\{d_1,d_2\}$ with $d_1<d_2$ that appears in the expression for~$y_I$, there
	is a nested component $N=(a_i,b_i)_\Do \sqcup \bigsqcup_{j=1}^{w_i}[\alpha_{i,j},\beta_{i,j}]_\Up$ of~$I$ such that
	$a_i\leq d_1<d_2\leq b_i$. Now $\delta$ appears in the expression for $y_I$ for every set $S$ where 
	$R_\delta\cap N \subseteq S \subseteq I$. Since $v>1$, the diagonal~$\delta$ never contributes to $y_I$.
\end{proof}
\noindent
We now analyse the remaining case $I=[n]$ and consider $(0,n+1)_{\Do}\sqcup [u_1,u_m]_{\Up}$ as up and down interval decomposition of~$I$.
\begin{lem}$ $\\
	For any partition~$\Do_c\sqcup \Up_c=[n]$ induced by some Coxeter element~$c$, the Minkowski coefficient~$y_{[n]}$ 
	satisfies\\[2mm]
	\centerline{$
		y_{[n]}= (-1)^{|[n]\setminus R_{\delta_1}|}
			  \left(
			 	 z_{R_{\delta_1}}^c-z_{R_{\delta_2}}^c-z_{R_{\delta_3}}^c+z_{R_{\delta_4}}^c
			  \right).
	$}\label{lem:y_I_for_[n]}
\end{lem}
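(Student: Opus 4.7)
The strategy is to adapt the case analysis from the proof of Proposition~\ref{prop:y_I_for_non-degenerate_D_I} to $I=[n]$. Here $a=0$, $b=n+1$, $\gamma=1$, $\Gamma=n$, so $\gamma,\Gamma\in\Do_c$ and every $u\in\Up_c$ satisfies $2\leq u\leq n-1$. The up and down interval decomposition of $[n]$ is $(0,n+1)_{\Do_c}$ of type $(1,0)$ if $\Up_c=\varnothing$, and $(0,n+1)_{\Do_c}\cup[u_1,u_m]_{\Up_c}$ of type $(1,1)$ otherwise, so Lemma~\ref{lem_relate_signs} applies to $I=[n]$ in either situation.

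First I would rewrite $y_{[n]}$ via Equation~(\ref{horrible_equation}) as a sum over proper diagonals of $Q_c$ plus the isolated summand $(-1)^{0}z^c_{[n]}=\tilde z^c_{[n]}$ coming from $J=[n]$ (for $\Up_c\neq\varnothing$, $W^{[n]}_1=\varnothing$ so Proposition~\ref{prop_tight_values_for_z_I} collapses to $z^c_{[n]}=\tilde z^c_{[n]}$; for $\Up_c=\varnothing$ a similar collapse occurs since the only diagonal $\{0,n+1\}$ in $[n]$'s decomposition is non-proper). The three-case analysis of Proposition~\ref{prop:y_I_for_non-degenerate_D_I} then transfers almost verbatim: Case~1 diagonals contribute only when $\bigl((0,\tilde a)\cap[n]\bigr)\cup\bigl((\tilde b,n+1)\cap[n]\bigr)=\varnothing$, that is when $\tilde a\in\{0,1\}$ and $\tilde b\in\{n,n+1\}$, which singles out the proper members of $\{\delta_1,\delta_2,\delta_3,\delta_4\}$ with contribution $(-1)^{|[n]\setminus R_{\delta_i}|}\tilde z^c_{R_{\delta_i}}$; Cases~2a and~2b.i are ruled out because $\Up_c\cap\{1,n\}=\varnothing$; Case~2b.ii falls into the $\gamma\in\Do_c$ branch and produces vanishing alternating sums in $\tilde z^c_{R_{\delta_a}}$, $\tilde z^c_{R_{\delta_\gamma}}$, and $\tilde z^c_{[n]}$; and Case~3 is impossible since any $\alpha\in\Up_c$ satisfies $\alpha\geq 2$. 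Hence only the Case~1 contributions (together with the isolated $J=[n]$ summand) survive.

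To finish I would verify that the isolated $+\tilde z^c_{[n]}$ fits into the claimed formula. If $\Up_c=\varnothing$ then $\delta_1=\{0,n+1\}$ is not proper, the extended definition gives $R_{\delta_1}=[n]$ and $z^c_{R_{\delta_1}}=\tilde z^c_{[n]}$, and this $+\tilde z^c_{[n]}$ from $J=[n]$ is precisely the leading $z^c_{R_{\delta_1}}$-term of the claimed formula (every proper subset $J\subsetneq[n]$ has $|W_1^J|=1$, so no other $\tilde z^c_{[n]}$-contributions arise). If $\Up_c\neq\varnothing$, the total coefficient of $\tilde z^c_{[n]}$ in $\sum_{J\subseteq[n]}(-1)^{|[n]\setminus J|}z^c_J$ equals $\sum_J(-1)^{|[n]\setminus J|}\bigl(v_J-|W^J|\bigr)$, and the Case~2b.ii cancellation argument, now extended to include the term $J=[n]$ for which $v_{[n]}-|W^{[n]}|=1$, shows that this sum vanishes. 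Either way, only the four Case~1 contributions survive, and Lemma~\ref{lem_relate_signs} factors $(-1)^{|[n]\setminus R_{\delta_1}|}$ out of them to give the claimed formula. The main obstacle is the last bookkeeping step in the $\Up_c\neq\varnothing$ case: carefully pairing the isolated $+\tilde z^c_{[n]}$ from $J=[n]$ with the internal $\tilde z^c_{[n]}$-cancellations of Case~2b.ii rather than having it appear as an explicit term of the formula.
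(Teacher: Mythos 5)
Your strategy is the same as the paper's: adapt the case analysis of Proposition~\ref{prop:y_I_for_non-degenerate_D_I} to $I=[n]$, where $\gamma=1$ and $\Gamma=n$ force $\gamma,\Gamma\in\Do_c$, so the four diagonals $\delta_1,\dots,\delta_4$ all fall under Case~1 while Cases~2a, 2b.i and~3 are vacuous. That reduction is correct, as is your implicit reliance on the extended conventions $R_{\delta_1}=[n]$ (when $\Up_c=\varnothing$) and $R_{\delta_4}=\varnothing$ (when $\Do_c=\{1,n\}$) to absorb the degenerate sub-cases that the paper spells out separately.

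The gap is in the accounting of the $\tilde z^c_{[n]}$-coefficient when $\Up_c\neq\varnothing$, and you in fact flag it yourself as ``the main obstacle''. You assert that Case~2b.ii ``produces vanishing alternating sums in $\tilde z^c_{R_{\delta_a}}$, $\tilde z^c_{R_{\delta_\gamma}}$, and $\tilde z^c_{[n]}$''; the first two are fine, but the third is false for $I=[n]$. The cancellation mechanism in Case~2b.ii (with $\gamma\in\Do_c$) is that, for each admissible $\tilde\alpha$, the $\tilde z^c_{[n]}$-contribution of $\delta_a=\{a,\tilde\alpha\}$ pairs off against that of $\delta_\gamma=\{\gamma,\tilde\alpha\}$: the sets $R_{\delta_a}$ and $R_{\delta_\gamma}$ differ by the single element $\gamma$, so they carry opposite signs. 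For a \emph{proper} $I$ both diagonals are always available, because the dangerous combination $a=0$, $\tilde\alpha=u_1$ would force $R_{\delta_\gamma}=[n]\subseteq I$, contradicting properness. For $I=[n]$ the pairing breaks precisely at $\tilde\alpha=u_1$: here $\delta_a=\{0,u_1\}$ is a boundary edge of $Q_c$, not a proper diagonal, so $\delta_\gamma=\{1,u_1\}$ has nothing to cancel against and contributes $(-1)^{|[n]\setminus R_{\{1,u_1\}}|}\tilde z^c_{[n]}=-\tilde z^c_{[n]}$. This is the ``remaining term'' the paper's proof exhibits explicitly, and it is what annihilates the isolated $+\tilde z^c_{[n]}$ coming from $J=[n]$. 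Your alternative tally $\sum_J(-1)^{|[n]\setminus J|}(v_J-|W^J|)=0$ is the right identity to prove, but you give no argument beyond invoking ``the Case~2b.ii cancellation argument, now extended to include $J=[n]$'' --- which your earlier sentence has already claimed vanishes on its own, so the two statements contradict each other. To close the gap you need to identify the unmatched proper diagonal $\{1,u_1\}$ and compute its contribution, or, as the paper does, sum the $\{0,u_j\}$- and $\{1,u_j\}$-contributions directly and check that the total is $-\tilde z^c_{[n]}$.
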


\begin{proof}
	For $I=[n]$, we have $a=0$, $\gamma=1$, $\Gamma=n$, and $b=n+1$. We associate to the up and down interval 
	decomposition of~$[n]$ precisely one diagonal that is not proper and rewrite the formula for~$y_{[n]}$ as
	\[
		y_{[n]}=z_{[n]}+\sum_{J\subset [n]}(-1)^{|[n]\setminus J|}z_J.
	\]
	We are now interested in the contribution of proper diagonals that are associated to $J\subset [n]$ and 
	distinguish four cases. To find the contributions in each case, we proceed along the lines of the proof 
	of Proposition~\ref{prop:y_I_for_non-degenerate_D_I}.
	\begin{compactenum}[(1)]
		\item $\Up_c\neq\varnothing$ and $\Do_c\neq \{ 1,n\}$. \\
			  Then $\mathscr D_{[n]}=\{\delta_1, \delta_2, \delta_3, \delta_4\}$ and each diagonal of $\mathscr D_{[n]}$ 
			  contributes to $y_{[n]}$ as well as all proper diagonals $\{0,u\}$ and $\{1,u\}$ with $u\in \Up_c$ 
			  since $a=0$ and $\gamma=1$. Hence we have 
			  \[
			  	 \sum_{\delta\in\mathscr D_{[n]}}(-1)^{|[n]\setminus R_{\delta}|} z_{R_\delta}^c
					   + \sum_{\begin{smallmatrix} \delta=\{0,\alpha\} \text{ with}\\ 
														\alpha\in [u_2,u_m]_{\Up_c} 
								\end{smallmatrix}}
							(-1)^{|[n]\setminus R_\delta|}z_{[n]}^c
					   + \sum_{\begin{smallmatrix} \delta=\{1,\alpha\} \text{ with}\\ 
													\alpha\in [u_1,u_m]_{\Up_c}
							    \end{smallmatrix}}
							(-1)^{|[n]\setminus R_\delta|}z_{[n]}^c
			  \]
			  for $\sum_{J\subset [n]}(-1)^{|[n]\setminus J|}z_J$. Since $\{0,u_1\}$ is not a proper diagonal, the 
			  second and third sum do not cancel and the term $(-1)^{|[n]\setminus R_{\{1,u_1\}}|}z_{[n]}^c$ remains.
			  Now, $|[n]\setminus R_{\{1,u_1\}}|=1$ and 
			  \[
				\sum_{\delta\in\mathscr D_{[n]}}(-1)^{|[n]\setminus R_{\delta}|} z_{R_\delta}^c
				= (-1)^{|[n]\setminus R_{\delta_1}|}
					  \left(
					 	 z_{R_{\delta_1}}^c-z_{R_{\delta_2}}^c-z_{R_{\delta_3}}^c+z_{R_{\delta_4}}^c
					  \right)
			 \] 
			 imply the claim. 
		\item $\Up_c=\varnothing$ and $\Do_c\neq \{ 1,n\}$. \\
			Then $\mathscr D_{[n]}=\{\delta_2, \delta_3, \delta_4\}$ and we have
			\[
				\sum_{J\subset [n]}(-1)^{|[n]\setminus J|}z_J 
				= \sum_{\delta\in\mathscr D_{[n]}}(-1)^{|[n]\setminus R_{\delta}|} z_{R_\delta}^c.
			\]
			The claim follows now from $R_{\delta_1} = [n]$ and Lemma~\ref{lem_relate_signs}. 
		\item $\Up_c\neq\varnothing$ and $\Do_c= \{ 1,n\}$. \\
			  We have $\mathscr D_{[n]}=\{\delta_1, \delta_2, \delta_3\}$.
			  Now each diagonal of $\mathscr D_{[n]}$ and all proper diagonals $\{0,u\}$ and $\{1,u\}$ with $u\in \Up_c$ contribute 
			  to~$y_I$ since $a=0$ and $\gamma=1$. Similar to the fist case, a term $(-1)^{|[n]\setminus R_{\{1,u_1\}}|}z_{[n]}^c$ 	
			  is not canceled and we obtain
			  \[
				 y_{[n]} = (-1)^{|[n]\setminus R_{\delta_1}|}
								  	\left(
								 	 	z_{R_{\delta_1}}^c-z_{R_{\delta_2}}^c-z_{R_{\delta_3}}^c
								  	\right).
				\]	
				Since $z_{R_{\delta_4}}=z_\varnothing=0$, the claim follows.
		\item $\Up_c=\varnothing$ and $\Do_c= \{ 1,n\}$.\\ 
			  We have $\mathscr D_{[n]}=\{\delta_2, \delta_3\}$, 
			  $R_{\delta_1}=[n]$ and $R_{\delta_4}=\varnothing$. Hence\\
			  \centerline{$
					 y_{[n]} = z_{[n]} + \sum_{\delta\in\mathscr D_{[n]}}(-1)^{|[n]\setminus R_{\delta}|} z_{R_\delta}^c
					     = (-1)^{|[n]\setminus R_{\delta_1}}
								\left(
									z_{R_{\delta_1}}^c-z_{R_{\delta_2}}^c-z_{R_{\delta_3}}^c+z_{R_{\delta_4}}^c
								\right). 
					$}
	\end{compactenum}
\end{proof}

%%%%%%%%%%%%%%%%%%%%%%%%%%%%%%%%%%%%%%%%%%%%%%%%%%%%%%%%%%%%%%%%%%%%
\section{Characterisation of $\mathscr D_I\neq \{\delta_1,\delta_2,\delta_3,\delta_4\}$ for $I\subset[n]$} \label{sec_characterisation_of_possible_D_I}
%%%%%%%%%%%%%%%%%%%%%%%%%%%%%%%%%%%%%%%%%%%%%%%%%%%%%%%%%%%%%%%%%%%%
As stated in Lemma~\ref{lem:possible_D_I}, not all $15$ proper subsets of $\{\delta_1,\delta_2,\delta_3,\delta_4\}$ appear as 
set of proper diagonals~$\mathscr D_I$ for $I\subset [n]$ with up and down decomposition of type~$(1,w)$ and some Coxeter
element~$c$. Certain subsets are never obtained this way, a complete list is given in Lemma~\ref{lem:possible_D_I}. The proof
that they do not appear is not difficult, for example, we can show that if~$\mathscr D_I$ contains certain diagonal(s) 
then $\mathscr D_I$ is forced to contain certain others. In this section we discuss the second statement of Lemma~\ref{lem:possible_D_I}
in detail and study the sets~$\mathscr D_I$ with $|\mathscr D_I|<4$ that do appear. The seven proper subset of $\{\delta_1,\delta_2,\delta_3,\delta_4\}$ that are possible are characterised in 
Lemma~\ref{lem_characterise_D_I=delta1} -- Lemma~\ref{lem:delta2_delta3_delta4}. We identified~$14$ conditions 
for to charactisatise the $7$ subsets. As before, we assume that $\Do_c=\{d_1=1<d_2<\ldots<d_{\ell-1}<d_\ell=n\}$ and 
$\Up_c=\{u_1<u_2\ldots<u_m\}$ partition~$[n]$.

\begin{lem}\label{lem_characterise_D_I=delta1}
	If $\mathscr D_I=\{ \delta_1\}$, then $I=\{d_r\}$ with $1\leq r \leq \ell$.
\end{lem}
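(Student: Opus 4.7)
The plan is to show the claim by reducing everything to the non-properness of $\delta_4=\{\gamma,\Gamma\}$. Since $\mathscr D_I=\{\delta_1\}$, the diagonal $\delta_4$ fails to be proper, which forces either $\gamma=\Gamma$ (so $|I|=1$) or $\{\gamma,\Gamma\}$ being a boundary edge of $Q_c$. The first task is to rule out the boundary-edge alternative, which automatically yields $|I|=1$; the second is to verify that among the singletons only those inside $\Do_c$ survive.

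For stage one, assume toward contradiction that $|I|\geq 2$, so that $\delta_4$ is a genuine boundary edge of $Q_c$. Such edges live either entirely on the down-path (consecutive labels in $\overline{\Do}_c$) or entirely on the up-path (consecutive labels in $\Up_c\cup\{0,n+1\}$), splitting the argument into two cases. If $\gamma=d_i$ and $\Gamma=d_{i+1}$ are consecutive in $\Do_c$, then the nonempty down interval of $I$ equals $\{d_i,d_{i+1}\}$ and $a\in\overline{\Do}_c$ is the predecessor of $d_i$; since $\gamma$ lies strictly between $a$ and $\Gamma$ on the down-path, $\delta_2=\{a,\Gamma\}$ is a proper diagonal, contradicting $\mathscr D_I=\{\delta_1\}$. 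If instead $\gamma=u_i$ and $\Gamma=u_{i+1}$ are consecutive in $\Up_c$, I would further split according to whether the down interval of the decomposition is empty. In the empty case, $a,b$ are forced to be consecutive in $\overline{\Do}_c$ (the only empty down interval able to nest $[u_i,u_{i+1}]_{\Up_c}$), so $\delta_1=\{a,b\}$ is itself a boundary edge, contradicting $\delta_1\in\mathscr D_I$. In the nonempty case, the nesting condition $a<\gamma<b$ gives $a\in\Do_c\cup\{0\}$; whether $a\in\Do_c$ (in which case $\delta_2$ joins a down-side vertex to an up-side vertex, hence is a proper diagonal) or $a=0$ with $\Gamma=u_{i+1}$ where $i+1\geq 2$ (so $\{0,\Gamma\}$ is not a boundary edge), the diagonal $\delta_2$ is proper and we again contradict $\mathscr D_I=\{\delta_1\}$.

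For stage two, write $I=\{x\}$ with $\gamma=\Gamma=x$. If $x\in\Up_c$, the up and down interval decomposition of $\{x\}$ uses the empty down interval $(d_r,d_{r+1})_{\Do_c}$ where $d_r,d_{r+1}\in\overline{\Do}_c$ are the two labels of $\overline{\Do}_c$ bracketing $x$. Then $a=d_r$ and $b=d_{r+1}$ are consecutive in $\overline{\Do}_c$, so $\delta_1=\{a,b\}$ is a boundary edge, contradicting $\delta_1\in\mathscr D_I$. Hence $x\in\Do_c$, giving $I\subset\Do_c$ and $|I|=1$.

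The main obstacle in this argument is the bookkeeping when the down interval of the decomposition is empty versus nonempty: the empty case collapses $\{a,b\}$ onto the boundary and kills $\delta_1$, while the nonempty case keeps $\delta_1$ alive but revives $\delta_2$. Making sure both sub-cases lead to a contradiction, but through different diagonals, is the part that requires the most care; everything else is a direct reading of the boundary structure of $Q_c$ induced by the partition $\Do_c\sqcup\Up_c$.
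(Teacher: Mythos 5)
Your argument is correct and is essentially the same observation the paper records in its one-line proof: $\delta_1\in\mathscr D_I$ forces $(a,b)_{\Do_c}\neq\varnothing$, while the non-properness of the remaining three diagonals forces $\gamma=\Gamma\in\Do_c$. You simply unfold the combinatorial case analysis that the paper leaves implicit, organizing it around $\delta_4$ (splitting into $\gamma=\Gamma$ versus $\{\gamma,\Gamma\}$ a boundary edge) and then invoking $\delta_1$ or $\delta_2$ to kill each alternative; the paper's terse statement implicitly uses the same edge structure of $Q_c$, so this is the same approach spelled out in more detail.
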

\begin{proof}
	$\delta_1\in\mathscr D_I$ implies $(a,b)_{\Do_c}\neq \varnothing$ and $\delta_2,\delta_3,\delta_4\not\in\mathscr D_I$
	imply $\gamma=\Gamma\in\Do_c$.
\end{proof}

\begin{lem}[compare Figure~\ref{fig:example_I}]\label{lem_characterise_D_I=delta1_delta4}$ $\\
	If $\mathscr D_I=\{ \delta_1,\delta_4\}$, then either
	\begin{compactenum}[(a)]
		\item $I=\{d_1,u_1\}$ and $u_1<d_2$, or \label{delta1_delta4_case_a}
		\item $I=\{u_m,d_{\ell}\}$ and $d_{\ell-1}<u_m$. \label{delta1_delta4_case_b}
	\end{compactenum}
\end{lem}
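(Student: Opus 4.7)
The plan is to deduce the form of $I$ by analyzing when each of $\delta_2$ and $\delta_3$ can fail to be proper while $\delta_1$ and $\delta_4$ remain proper, using the key structural observation that the non-proper diagonals of $Q_c$ are exactly the edges of $\partial Q$. These edges are the consecutive pairs along the down path $0, d_1, \ldots, d_\ell, n+1$ together with the consecutive pairs along the up path $0, u_1, \ldots, u_m, n+1$; in particular, the only edges connecting a vertex of $\overline{\Do}_c$ to a vertex of $\Up_c$ are $\{0, u_1\}$ and $\{u_m, n+1\}$, since the two paths share only the endpoints $0$ and $n+1$.

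First I would make two preliminary reductions. Since $\delta_1 = \{a,b\}$ is proper and $a, b \in \overline{\Do}_c$, the down interval $(a,b)_{\Do_c}$ must be non-empty, because otherwise $a$ and $b$ would be consecutive down-labels and $\{a,b\}$ would be a down-path edge. Since $\delta_4 = \{\gamma, \Gamma\}$ is proper, we have $\gamma \neq \Gamma$ and in particular $|I| \geq 2$.

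The main argument is a case split according to whether $\gamma$ and $\Gamma$ lie in $\Do_c$ or in $\Up_c$. If both lie in $\Do_c$, then $\{a, \Gamma\}$ is a down-path edge, forcing $\Gamma$ to be the smallest down-label exceeding $a$; but then $\Gamma$ coincides with the smallest down-label in $(a,b)_{\Do_c}$, which equals $\gamma$, contradicting $\gamma \neq \Gamma$. If both lie in $\Up_c$, then $\delta_2$ non-proper forces $a = 0$ and $\Gamma = u_1$, while $\delta_3$ non-proper forces $b = n+1$ and $\gamma = u_m$; combined with $\gamma \leq \Gamma$ this gives $u_m = u_1$, hence $\gamma = \Gamma$, a contradiction. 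In the mixed case $\gamma \in \Do_c$, $\Gamma \in \Up_c$, the edge analysis of $\delta_2$ forces $a = 0$ and $\Gamma = u_1$, whence $\gamma = d_1 = 1$ (the smallest label in $I$); the edge analysis of $\delta_3$ then forces $b = d_2$. Therefore $(a,b)_{\Do_c} = \{1\}$, and any up-element of $I$ must lie in the open interval $(0, d_2) \subset [n]$; since $\Gamma = u_1$ is the largest up-label of $I$ and $u_1$ is also the smallest up-label of $\Up_c$, the only possibility is a single up-interval $[u_1, u_1]_{\Up_c}$, which exists precisely when $u_1 < d_2$, yielding $I = \{1, u_1\}$ as in case (a). The symmetric case $\gamma \in \Up_c$, $\Gamma \in \Do_c$ yields $I = \{u_m, d_\ell\}$ with $d_{\ell-1} < u_m$, which is case (b).

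The main obstacle to watch is translating each ``non-proper'' condition correctly into an index constraint in the presence of nontrivial up-intervals: the implications $\gamma = d_1 = 1$ and $\Gamma = d_\ell = n$ in the mixed subcases rely on $\gamma$ (respectively $\Gamma$) being the smallest (largest) element of $I$ overall, not merely among down-labels, so one must verify that the assumed locations of $\gamma$ and $\Gamma$ in $\Do_c$ or $\Up_c$ are compatible with the forced values of $a$ and $b$. Once this bookkeeping is done, the two boundary constraints $u_1 < d_2$ and $d_{\ell-1} < u_m$ emerge naturally as the condition that the relevant up-interval actually fits inside the open interval $(a,b)$.
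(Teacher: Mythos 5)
Your proof is correct and follows essentially the same case analysis as the paper's: translate non-properness of $\delta_2$ and $\delta_3$ into the statement that $\{a,\Gamma\}$ and $\{\gamma,b\}$ are boundary edges of $Q_c$, then split on whether $\gamma,\Gamma$ lie in $\Do_c$ or $\Up_c$. You spell out the two impossible subcases ($\gamma,\Gamma$ both in $\Do_c$ and both in $\Up_c$) in more detail than the paper, which simply asserts them without elaboration, but the underlying argument is the same.
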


\begin{figure}
      \begin{center}
      \begin{minipage}{0.95\linewidth}
         \begin{center}
			\begin{tikzpicture}[thick]
				\filldraw [black] (-10:2cm) circle (2pt)
								  (10:2cm) circle (2pt)
							  	  (170:2cm) circle (2pt)
								  (190:2cm) circle (2pt);
				\draw (-10:2cm) node[anchor=west] {\scriptsize$d_\ell=n$} -- (10:2cm) node[anchor=west] {\scriptsize$n+1$}; 
				\draw (170:2cm) node[anchor=east] {\scriptsize$a=0$} -- (190:2cm) node[anchor=east] {\scriptsize$\gamma=d_1=1$}; 
				\draw (10:2cm) arc (26:154:2.2cm and 0.8cm);
				\draw (190:2cm) arc (206:334:2.2cm and 0.8cm);
				\filldraw [black] (210:1.35cm) node[anchor=north] {\scriptsize$b=d_{2}$} circle (2pt)
								  (158:1.6cm) node[anchor=south] {\scriptsize$\Gamma=u_{1}\ $} circle (2pt);
				\draw (158:1.6cm) -- (210:1.35cm);
				\draw [dashed] (170:2cm) -- (210:1.35cm);
				\draw [dashed] (190:2cm) -- (158:1.6cm);
		 \end{tikzpicture}
		 \qquad
		 \begin{tikzpicture}[thick]
			\filldraw [black] (-10:2cm) circle (2pt)
							  (10:2cm) circle (2pt)
							  (170:2cm) circle (2pt)
							  (190:2cm) circle (2pt);
			\draw (-10:2cm) node[anchor=west] {\scriptsize$\Gamma=d_\ell=n$} -- (10:2cm) node[anchor=west] {\scriptsize$b=n+1$}; 
			\draw (170:2cm) node[anchor=east] {\scriptsize$0$} -- (190:2cm) node[anchor=east] {\scriptsize$d_1=1$}; 
			\draw (10:2cm) arc (26:154:2.2cm and 0.8cm);
			\draw (190:2cm) arc (206:334:2.2cm and 0.8cm);
			\filldraw [black] (23.5:1.58cm) node[anchor=south] {\scriptsize$\gamma=u_{m}$} circle (2pt)
							  (330:1.35cm) node[anchor=north] {\scriptsize$\qquad a=d_{\ell-1}$} circle (2pt);
			\draw (330:1.35cm) -- (23.5:1.58cm);
			\draw [dashed] (330:1.35cm) -- (10:2cm);
			\draw [dashed] (23.5:1.58cm) -- (-10:2cm);
		 \end{tikzpicture}
		 \centerline{(a) $I=\{1,u_1\}$ and $u_1<d_2$ \rule{2.5cm}{0cm} (b) $I=\{u_m,n\}$ and $d_{\ell-1}<u_m$}
	  \end{center}
	  \caption[]{Schematic illustrations the two cases of $\mathscr D_{I}=\{\delta_1,\delta_4\}$ (Lemma~\ref{lem_characterise_D_I=delta1_delta4}).}
      \label{fig:example_I}
      \end{minipage}
      \end{center}
\end{figure}

\begin{proof}
	$\delta_2,\delta_3\not\in\mathscr D_I$ imply that $\{a,\Gamma\}$ and $\{\gamma,b\}$ are (non-degenerate) edges of $Q$. 
	In particular, neither $\gamma,\Gamma\in\Do_c$ nor $\gamma,\Gamma\in\Up_c$ is possible. 
	
	Firstly, suppose $\gamma\in\Do_c$ and $\Gamma\in\Up_c$. Then $\delta_2\not\in\mathscr D_I$ implies~$a=0$, $\Gamma=u_1$, 
	and~$\gamma=d_1=1$. Now $\delta_3\not\in\mathscr D_I$ yields~$b=d_2$ and~$\Gamma=u_1$ requires~$u_1<d_2$ and we have  
	shown~(\ref{delta1_delta4_case_a}).
	
	Secondly, suppose $\gamma\in\Up_c$ and $\Gamma\in\Do_c$. Then $\delta_3\not\in\mathscr D_I$ implies $b=n+1$, $\gamma=u_m$,
	and~$\Gamma=d_\ell=n$. Now $\delta_2\not\in\mathscr D_I$ yields~$a=d_{\ell-1}$ and $\gamma=u_m$ requires~$d_{\ell-1}<u_m$.
	This gives~(\ref{delta1_delta4_case_b}).
\end{proof}
\begin{lem}[compare Figure~\ref{fig:example_II}]\label{lem_characterise_D_I=delta2_delta3}$ $\\
	If $\mathscr D_I=\{ \delta_2,\delta_3\}$ then either
	\begin{compactenum}[(a)]
		\item $I=\{u_s\}$ with $1\leq s\leq m$, or\label{delta2_delta3_a}
		\item $I=\{u_s,u_{s+1}\}$ with $1\leq s<m$.\label{delta2_delta3_b}
	\end{compactenum} 
\end{lem}

\begin{figure}[b]
      \begin{center}
      \begin{minipage}{0.95\linewidth}
         \begin{center}
		  $ $\\[3mm]
			\begin{tikzpicture}[thick]
				\filldraw [black] (-10:2cm) circle (2pt)
								  (10:2cm) circle (2pt)
							  	  (170:2cm) circle (2pt)
								  (190:2cm) circle (2pt);
				\draw (-10:2cm) node[anchor=west] {\scriptsize$d_\ell=n$} -- (10:2cm) node[anchor=west] {\scriptsize$n+1$}; 
				\draw (170:2cm) node[anchor=east] {\scriptsize$0$} -- (190:2cm) node[anchor=east] {\scriptsize$d_1=1$}; 
				\draw (10:2cm) arc (26:154:2.2cm and 0.8cm);
				\draw (190:2cm) arc (206:334:2.2cm and 0.8cm);
				\filldraw [black] (210:1.35cm) node[anchor=north] {\scriptsize$a=d_{r-1}$} circle (2pt)
								  (330:1.35cm) node[anchor=north] {\scriptsize$\qquad b=d_{r}$} circle (2pt)
								  (90:0.8cm) node[anchor=south] {\scriptsize$\gamma=\Gamma=u_{s}$} circle (2pt);
				\draw (210:1.35cm) -- (90:0.8cm);
				\draw (90:0.8cm) -- (330:1.35cm);
		 \end{tikzpicture}
		 \qquad
		 \begin{tikzpicture}[thick]
			\filldraw [black] (-10:2cm) circle (2pt)
							  (10:2cm) circle (2pt)
							  (170:2cm) circle (2pt)
							  (190:2cm) circle (2pt);
			\draw (-10:2cm) node[anchor=west] {\scriptsize$\Gamma=d_\ell=n$} -- (10:2cm) node[anchor=west] {\scriptsize$b=n+1$}; 
			\draw (170:2cm) node[anchor=east] {\scriptsize$0$} -- (190:2cm) node[anchor=east] {\scriptsize$d_1=1$}; 
			\draw (10:2cm) arc (26:154:2.2cm and 0.8cm);
			\draw (190:2cm) arc (206:334:2.2cm and 0.8cm);
			\filldraw [black] (210:1.35cm) node[anchor=north] {\scriptsize$a=d_{r-1}$} circle (2pt)
							  (330:1.35cm) node[anchor=north] {\scriptsize$\qquad b=d_{r}$} circle (2pt)
							  (130:1.0cm) node[anchor=south] {\scriptsize$\gamma=u_{s}$} circle (2pt)
							  (50:1.0cm) node[anchor=south] {\scriptsize$\Gamma=u_{s+1}$} circle (2pt);
			\draw (210:1.35cm) -- (130:1.0cm);
			\draw (50:1.0cm) -- (330:1.35cm);
			\draw [dashed] (210:1.35cm) -- (50:1.0cm);
			\draw [dashed] (130:1.0cm) -- (330:1.35cm);
		 \end{tikzpicture}
		 \centerline{(a) $I=\{u_s\}$ and $1\leq s<m$ \rule{2.1cm}{0cm} (b) $I=\{u_s,u_{s+1}\}$ and $1\leq s<m$}
	  \end{center}
	  \caption[]{Schematic illustrations the two cases of $\mathscr D_{I}=\{\delta_2,\delta_3\}$ (Lemma~\ref{lem_characterise_D_I=delta2_delta3}).}
      \label{fig:example_II}
      \end{minipage}
      \end{center}
\end{figure}
\begin{figure}[b]
      \begin{center}
      \begin{minipage}{0.95\linewidth}
         \begin{center}
		  $ $\\[3mm]
			\begin{tikzpicture}[thick]
				\filldraw [black] (-10:2cm) circle (2pt)
								  (10:2cm) circle (2pt)
							  	  (170:2cm) circle (2pt)
								  (190:2cm) circle (2pt);
				\draw (-10:2cm) node[anchor=west] {\scriptsize$d_\ell=n$} -- (10:2cm) node[anchor=west] {\scriptsize$n+1$}; 
				\draw (170:2cm) node[anchor=east] {\scriptsize$0$} -- (190:2cm) node[anchor=east] {\scriptsize$d_1=1$}; 
				\draw (10:2cm) arc (26:154:2.2cm and 0.8cm);
				\draw (190:2cm) arc (206:334:2.2cm and 0.8cm);
				\filldraw [black] (200:1.65cm) node[anchor=north] {\scriptsize$a=d_{r-1}\qquad\quad$} circle (2pt)
								  (220:1.15cm) node[anchor=north] {\scriptsize$\ \ \gamma=d_{r}$} circle (2pt)
								  (340:1.65cm) node[anchor=north] {\scriptsize$\qquad\quad b=d_{r+2}$} circle (2pt)
								  (320:1.15cm) node[anchor=north] {\scriptsize$\ \ \ \Gamma=d_{r+1}$} circle (2pt)
								  (120:0.9cm) node[anchor=south] {\scriptsize$u_{s}$} circle (2pt)
								  (60:0.9cm) node[anchor=south] {\scriptsize$u_{t}$} circle (2pt);
				\draw (200:1.65cm) -- (120:0.9cm);
				\draw (60:0.9cm) -- (340:1.65cm);
				\draw [dashed] (200:1.65cm) -- (340:1.65cm);
				\draw [dashed] (200:1.65cm) -- (320:1.15cm);
				\draw [dashed] (220:1.15cm) -- (340:1.65cm);
		 \end{tikzpicture}
		 \qquad
		 \begin{tikzpicture}[thick]
			\filldraw [black] (-10:2cm) circle (2pt)
							  (10:2cm) circle (2pt)
							  (170:2cm) circle (2pt)
							  (190:2cm) circle (2pt);
			\draw (-10:2cm) node[anchor=west] {\scriptsize$\Gamma=d_\ell=n$} -- (10:2cm) node[anchor=west] {\scriptsize$b=n+1$}; 
			\draw (170:2cm) node[anchor=east] {\scriptsize$0$} -- (190:2cm) node[anchor=east] {\scriptsize$d_1=1$}; 
			\draw (10:2cm) arc (26:154:2.2cm and 0.8cm);
			\draw (190:2cm) arc (206:334:2.2cm and 0.8cm);
			\filldraw [black] (220:1.15cm) node[anchor=north] {\scriptsize$a=d_{q}$} circle (2pt)
							  (320:1.15cm) node[anchor=north] {\scriptsize$b=d_{r}$} circle (2pt)
							  (120:0.9cm) node[anchor=south] {\scriptsize$\gamma=u_{s}$} circle (2pt)
							  (60:0.9cm) node[anchor=south] {\ \ \ \ \ \ \ \scriptsize$\Gamma=u_{s+1}$} circle (2pt);
			\draw (220:1.15cm) -- (120:0.9cm);
			\draw (60:0.9cm) -- (320:1.15cm);
			\draw [dashed] (220:1.15cm) -- (320:1.15cm);
			\draw [dashed] (220:1.15cm) -- (60:0.9cm);
			\draw [dashed] (120:0.9cm) -- (320:1.15cm);
		 \end{tikzpicture}
		\centerline{
		\begin{minipage}{0.35\linewidth}
			\begin{compactenum}[(a)]
				\item $I=\{d_r,d_{r+1}\}\cup M$\\ with $1\leq r< \ell$\\ and $M\subseteq [d_r,d_{r+1}]\cap \Up_c$
			\end{compactenum}
		\end{minipage}
		$\qquad\qquad\quad$
		\begin{minipage}{0.35\linewidth}
			\begin{compactenum}[(b)]
				\item $I=M\cup\{u_s,u_{s+1}\}$\\ with $1\leq s < m$\\ and $M = [u_s,u_{s+1}]\cap \Do_c \neq \varnothing$
			\end{compactenum}
		\end{minipage}
		}
	  \end{center}
	  \caption[]{Schematic illustrations the two cases of $\mathscr D_{I}=\{\delta_1,\delta_2,\delta_3\}$ (Lemma~\ref{lem:delta1_delta2_delta3}).}
      \label{fig:example_III}
      \end{minipage}
      \end{center}
\end{figure}

\begin{proof}
	From $\delta_1\not\in\mathscr D_I$, we obtain $(a,b)_{\Do_c}=\varnothing$, thus $a<\gamma\leq\Gamma<b$ and $\gamma,\Gamma\in\Up_c$.
	Now $\delta_4\not\in\mathscr D_I$ implies that $\{\gamma,\Gamma\}$ is either degenerate or an edge of~$Q$. This proves the claim.
\end{proof}

\begin{lem}[Compare Figure~\ref{fig:example_III}]\label{lem:delta1_delta2_delta3}$ $\\
	If $\mathscr D_I=\{ \delta_1,\delta_2,\delta_3\}$, then either
	\begin{compactenum}[(a)]
		\item $I=\{d_r,d_{r+1}\}\sqcup M$ with $1\leq r< \ell$ and $M\subseteq [d_r,d_{r+1}]\cap\Up_c$
 			  or \label{lem:delta1_delta2_delta3_a}
		\item $I= M \sqcup \{u_s,u_{s+1}\}$ with $1\leq s < m$ and $M=[u_s,u_{s+1}]\cap\Do_c \neq \varnothing$
			  \label{lem:delta1_delta2_delta3_b}
	\end{compactenum}
\end{lem}
\begin{proof}
	$\delta_1\in\mathscr D_I$ implies $(a,b)_{\Do_c}\neq \varnothing$, while $\delta_4\not\in \mathscr D_I$ implies
	that $\{\gamma,\Gamma\}$ is either an edge of~$Q$ or $\gamma=\Gamma$. Suppose first $\gamma=\Gamma$.
	Then $\gamma=\Gamma\in\Do_c$ implies the contradiction $\mathscr D_I=\{\delta_1\}$, while $\gamma=\Gamma\in\Up_c$
	implies $(a,b)_{\Do_c}=\varnothing$, contradicting $\delta_1\in\mathscr D_I$. We therefore assume $\gamma\neq\Gamma$ and only
	have to distinguish the cases $\gamma,\Gamma\in\Do_c$ and $\gamma,\Gamma\in\Up_c$, the other cases $\gamma\in\Do_c$, $\Gamma\in\Up_c$
	and $\gamma\in\Up_c$ and $\Gamma\in\Do_c$ are not possible since $\delta_4\not\in\mathscr D_I$.
	
	Firstly, suppose $\gamma,\Gamma\in\Do_c$. Then $\gamma=d_r$ and $\Gamma=d_{r+1}$ for some $1\leq r \leq \ell-1$, since
	$\delta_4\not\in\mathscr D_I$. But this implies $I=\{d_r,d_{r+1}\}\cup\bigl([d_r,d_{r+1}]\cap \Up_c\bigr)$, which is the 
	claim of (\ref{lem:delta1_delta2_delta3_a}). Secondly, suppose $\gamma,\Gamma\in\Up_c$. Then
	$\gamma=u_s$ and $\Gamma=u_{s+1}$ for some $1\leq s \leq m-1$. But this implies $[u_s,u_{s+1}]\cap\Do_c=(d_q,d_r)_{\Do_c}\neq \varnothing$ 
	and $I=\bigl( [u_s,u_{s+1}]\cap\Do_c \bigr)\cup [u_s,u_{s+1}]_{\Up_c}$, which proves (\ref{lem:delta1_delta2_delta3_b}).
\end{proof}

\begin{figure}[b]
      \begin{center}
		\rule{0mm}{5mm}
		
      \begin{minipage}{0.95\linewidth}
		 \begin{center}
		  $ $\\[3mm]
		 	\begin{tikzpicture}[thick]
				\filldraw [black] (-10:2cm) circle (2pt)
								  (10:2cm) circle (2pt)
							  	  (170:2cm) circle (2pt)
								  (190:2cm) circle (2pt);
				\draw (-10:2cm) node[anchor=west] {\scriptsize$\Gamma=d_\ell=n$} -- (10:2cm) node[anchor=west] {\scriptsize$b=n+1$}; 
				\draw (170:2cm) node[anchor=east] {\scriptsize$0$} -- (190:2cm) node[anchor=east] {\scriptsize$d_1=1$}; 
				\draw (10:2cm) arc (26:154:2.2cm and 0.8cm);
				\draw (190:2cm) arc (206:334:2.2cm and 0.8cm);
				\filldraw [black] (330:1.35cm) node[anchor=north] {\scriptsize$a=d_{r}$} circle (2pt)
								  (23.5:1.58cm) node[anchor=south] {\scriptsize$\gamma=u_{m}$} circle (2pt);
				\draw (330:1.35cm) -- (23.5:1.58cm);
				\draw [dashed] (330:1.35cm) -- (10:2cm);
				\draw [dashed] (330:1.35cm) -- (-10:2cm);
				\draw [dashed] (23.5:1.58cm) -- (-10:2cm);
	 		\end{tikzpicture}
		 \end{center}%$ $\\
		 \centerline{\begin{minipage}{0.35\linewidth}
					 \begin{compactenum}[(a)]
		 				\item $I = \{ d_{r+1},\ldots,d_\ell\} \cup \{ u_m \}$\\ with $d_r < u_m < d_{r+1} < d_\ell$
		 			 \end{compactenum}
				 	 \end{minipage}
					}
         \begin{center}
			\begin{tikzpicture}[thick]
				\filldraw [black] (-10:2cm) circle (2pt)
								  (10:2cm) circle (2pt)
							  	  (170:2cm) circle (2pt)
								  (190:2cm) circle (2pt);
				\draw (-10:2cm) node[anchor=west] {\scriptsize$d_\ell=n$} -- (10:2cm) node[anchor=west] {\scriptsize$n+1$}; 
				\draw (170:2cm) node[anchor=east] {\scriptsize$0$} -- (190:2cm) node[anchor=east] {\scriptsize$d_1=1$}; 
				\draw (10:2cm) arc (26:154:2.2cm and 0.8cm);
				\draw (190:2cm) arc (206:334:2.2cm and 0.8cm);
				\filldraw [black] (200:1.65cm) node[anchor=north] {\scriptsize$a=d_{r-1}\qquad\quad$} circle (2pt)
								  (220:1.15cm) node[anchor=north] {\scriptsize$\ \ \gamma=d_{r}$} circle (2pt)
								  (320:1.15cm) node[anchor=north] {\scriptsize$\ \ \ b=d_{r+1}$} circle (2pt)
								  (120:0.9cm) node[anchor=south] {\scriptsize$u_{s}$} circle (2pt)
								  (60:0.9cm) node[anchor=south] {\scriptsize$\Gamma=u_{t}$} circle (2pt);
				\draw (200:1.65cm) -- (120:0.9cm);
				\draw (60:0.9cm) -- (320:1.15cm);
				\draw [dashed] (200:1.65cm) -- (320:1.15cm);
				\draw [dashed] (200:1.65cm) -- (60:0.9cm);
				\draw [dashed] (220:1.15cm) -- (60:0.9cm);
		 \end{tikzpicture}
		 \qquad
		 \begin{tikzpicture}[thick]
			\filldraw [black] (-10:2cm) circle (2pt)
							  (10:2cm) circle (2pt)
							  (170:2cm) circle (2pt)
							  (190:2cm) circle (2pt);
			\draw (-10:2cm) node[anchor=west] {\scriptsize$d_\ell=n$} -- (10:2cm) node[anchor=west] {\scriptsize$n+1$}; 
			\draw (170:2cm) node[anchor=east] {\scriptsize$a=0$} -- (190:2cm) node[anchor=east] {\scriptsize$\gamma=d_1=1$}; 
			\draw (10:2cm) arc (26:154:2.2cm and 0.8cm);
			\draw (190:2cm) arc (206:334:2.2cm and 0.8cm);
			\filldraw [black] (161:1.75cm) node[anchor=south] {\scriptsize$u_{1}\ $} circle (2pt)
							  (145:1.25cm) node[anchor=south] {\scriptsize$\ \ \Gamma=u_{s}$} circle (2pt)
							  (220:1.15cm) node[anchor=north] {\scriptsize$b=d_{2}$} circle (2pt);
			\draw (145:1.25cm) -- (220:1.15cm);
			\draw [dashed] (170:2cm) -- (143:1.2cm);
			\draw [dashed] (190:2cm) -- (145:1.25cm);
			\draw [dashed] (170:2cm) -- (220:1.15cm);
		 \end{tikzpicture}
		 \centerline{\begin{minipage}{0.3\linewidth}
		 			 \begin{compactenum}[(b)]
		 			 	\item $I=\{d_r\}\cup M$\\ with $1<r<\ell$,\\ and $M\subseteq[d_r,d_{r+1}]\cap\Up_c$\\ and $M\neq \varnothing$
		 			 \end{compactenum}
		 			 \end{minipage}
				 	 $\qquad\qquad\qquad\qquad$
					 \begin{minipage}{0.3\linewidth}
					 \begin{compactenum}[(c)]
						\item $I=\{d_1\}\cup M$\\ with $M\subseteq[d_1,d_2]\cap\Up_c$\\ and $M\setminus\{u_1\}\neq \varnothing$
					 \end{compactenum}
				 	 \end{minipage}
		}
	  \end{center}
	  \caption[]{Schematic illustrations the three cases of $\mathscr D_{I}=\{\delta_1,\delta_2,\delta_4\}$ (Lemma~\ref{lem:delta1_delta2_delta4}).}
      \label{fig:example_IV}
      \end{minipage}
	  $ $
	
	  \rule{0mm}{5mm}
      \end{center}
\end{figure}
\noindent
Lemma~\ref{lem:delta1_delta2_delta4} is symmetric to Lemma~\ref{lem:delta1_delta3_delta4}, their proofs are along the same lines.
\begin{lem}[Compare Figure~\ref{fig:example_IV}]\label{lem:delta1_delta2_delta4}$ $\\
	If $\mathscr D_I=\{ \delta_1,\delta_2,\delta_4\}$, then either
	\begin{compactenum}[(a)]
		\item $I=\{d_{r+1}, \ldots, d_\ell\}\cup \{u_m\}$ with $d_r<u_m<d_{r+1}<d_\ell$
			\label{lem:delta1_delta2_delta4_a}
		\item $I=\{d_r\}\cup M$ with $1<r<\ell$ and $\varnothing\neq M\subseteq [d_r,d_{r+1}]\cap \Up_c$
			\label{lem:delta1_delta2_delta4_b}
		\item $I=\{d_1\}\cup M$ with $M\subseteq [d_1,d_2]\cap \Up_c$ and $M\setminus\{u_1\} \neq \varnothing$
			\label{lem:delta1_delta2_delta4_c}
	\end{compactenum}
\end{lem}

\begin{proof}
	Since $\delta_1\in\mathscr D_I$, we have $(a,b)_{\Do_c}\neq \varnothing$, that is, $a,b$ are not consecutive numbers 
	in~$\Do_c$. From $\delta_3\not\in\mathscr D_I$, we deduce that $\{\gamma,b\}$ is an edge of $Q$ and $\gamma,\Gamma\in\Up_c$
	is therefore impossible unless $\gamma=\Gamma$. Moreover, $\delta_4\in \mathscr D_I$ implies that $\gamma=\Gamma$ is impossible. 
	We now have two cases to distinguish. 
	
	Firstly, suppose $\gamma=u_m$ and $b=n+1$. Then $\Gamma=d_\ell=n$ and $\delta_2\in\mathscr D_I$ implies 
	$(a,\Gamma)_{\Do_c}\neq \varnothing$. Together with $a=\max\set{d\in\Do_c}{d<u_m}$ we have $a=d_r$ 
	for some $1\leq r\leq \ell-2$ with $u_m<d_{r+1}$ and $I=(d_r,n+1)_{\Do_c}\cup[u_m,u_m]_{\Up_c}$, this 
	shows~(\ref{lem:delta1_delta2_delta4_a}).
	
	Secondly, suppose $\gamma=d_r$ and $b=d_{r+1}$ for some $1\leq r\leq \ell-1$, and $\Gamma\in(\gamma,b)\cap\Up_c$. 
	If $\gamma=1$ then $\delta_2\in\mathscr D_I$ implies $\Gamma\neq u_1$, so we distinguish the cases $\gamma=1$ and $\gamma\neq 1$.
	Suppose first that $\gamma=d_r$ with $r>1$. If $[d_r,d_{r+1}]\cap \Up_c\neq \varnothing$ then we immediately have the claim
	for every non-empty $M\subseteq [d_r,d_{r+1}]\cap \Up_c$. If $[d_r,d_{r+1}]\cap \Up_c= \varnothing$ then $\gamma=\Gamma\in\Do_c$
	which is impossible. Thus we have shown~(\ref{lem:delta1_delta2_delta4_b}). Suppose now that $\gamma=d_1=1$. Then $a=0$, $b=d_2$,
	and $\delta_2\in \mathscr D_I$ implies $\Gamma\in\Up_c\setminus \{u_1\}$. This proves~(\ref{lem:delta1_delta2_delta4_c}).
\end{proof}
\enlargethispage{\baselineskip}
\begin{figure}
      \begin{center}
      \begin{minipage}{0.95\linewidth}
		 \begin{center}
		 	\begin{tikzpicture}[thick]
				\filldraw [black] (-10:2cm) circle (2pt)
								  (10:2cm) circle (2pt)
							  	  (170:2cm) circle (2pt)
								  (190:2cm) circle (2pt);
				\draw (-10:2cm) node[anchor=west] {\scriptsize$d_\ell=n$} -- (10:2cm) node[anchor=west] {\scriptsize$n+1$}; 
				\draw (170:2cm) node[anchor=east] {\scriptsize$a=0$} -- (190:2cm) node[anchor=east] {\scriptsize$\gamma=d_1=1$}; 
				\draw (10:2cm) arc (26:154:2.2cm and 0.8cm);
				\draw (190:2cm) arc (206:334:2.2cm and 0.8cm);
				\filldraw [black] (210:1.35cm) node[anchor=north] {\scriptsize$b=d_{r}$} circle (2pt)
								  (158:1.6cm) node[anchor=south] {\scriptsize$\Gamma=u_{1}\ $} circle (2pt);
				\draw (158:1.6cm) -- (210:1.35cm);
				\draw [dashed] (170:2cm) -- (210:1.35cm);
				\draw [dashed] (190:2cm) -- (158:1.6cm);
				\draw [dashed] (190:2cm) -- (210:1.35cm);
	 		\end{tikzpicture}
		 \end{center}%$ $\\
		 \centerline{\begin{minipage}{0.35\linewidth}
					 \begin{compactenum}[(a)]
		 				\item $I = \{ d_1,\ldots,d_{r+1}\} \cup \{ u_1 \}$\\ with $d_1 < d_{r-1} < u_1 < d_r$
		 			 \end{compactenum}
				 	 \end{minipage}
					}
			\begin{tikzpicture}[thick]
				\filldraw [black] (-10:2cm) circle (2pt)
								  (10:2cm) circle (2pt)
							  	  (170:2cm) circle (2pt)
								  (190:2cm) circle (2pt);
				\draw (-10:2cm) node[anchor=west] {\scriptsize$d_\ell=n$} -- (10:2cm) node[anchor=west] {\scriptsize$n+1$}; 
				\draw (170:2cm) node[anchor=east] {\scriptsize$0$} -- (190:2cm) node[anchor=east] {\scriptsize$d_1=1$}; 
				\draw (10:2cm) arc (26:154:2.2cm and 0.8cm);
				\draw (190:2cm) arc (206:334:2.2cm and 0.8cm);
				\filldraw [black] (210:1.35cm) node[anchor=north] {\scriptsize$a=d_{r-1}$} circle (2pt)
								  (330:1.35cm) node[anchor=north] {\scriptsize$\qquad b=d_{r+1}$} circle (2pt)
								  (290:0.84cm) node[anchor=north] {\scriptsize$\Gamma=d_{r}$} circle (2pt)
								  (145:1.25cm) node[anchor=south] {\scriptsize$\gamma=u_{s}$} circle (2pt)
								  (115:0.88cm) node[anchor=south] {\scriptsize$\alpha$} circle (2pt);
				\draw (210:1.35cm) -- (145:1.25cm);
				\draw (115:0.88cm) -- (330:1.35cm);
				\draw [dashed] (210:1.35cm) -- (330:1.35cm);
				\draw [dashed] (145:1.25cm) -- (330:1.35cm);
				\draw [dashed] (145:1.25cm) -- (290:0.84cm);
		 \end{tikzpicture}
		 \qquad
		 \begin{tikzpicture}[thick]
			\filldraw [black] (-10:2cm) circle (2pt)
							  (10:2cm) circle (2pt)
							  (170:2cm) circle (2pt)
							  (190:2cm) circle (2pt);
			\draw (-10:2cm) node[anchor=west] {\scriptsize$\Gamma=d_\ell=n$} -- (10:2cm) node[anchor=west] {\scriptsize$b=n+1$}; 
			\draw (170:2cm) node[anchor=east] {\scriptsize$0$} -- (190:2cm) node[anchor=east] {\scriptsize$d_1=1$}; 
			\draw (10:2cm) arc (26:154:2.2cm and 0.8cm);
			\draw (190:2cm) arc (206:334:2.2cm and 0.8cm);
			\filldraw [black] (50:1cm) node[anchor=south] {\scriptsize$\gamma=u_{s}$} circle (2pt)
							  (23.5:1.58cm) node[anchor=south] {\scriptsize$u_{m}$} circle (2pt)
							  (290:0.84cm) node[anchor=north] {\scriptsize$a=d_{\ell-1}$} circle (2pt);
			\draw (290:0.84cm) -- (50:1cm);
			\draw [dashed] (290:0.84cm) -- (10:2cm);
			\draw [dashed] (50:1cm) -- (-10:2cm);
			\draw [dashed] (50:1cm) -- (10:2cm);
		 \end{tikzpicture}
		 \centerline{\begin{minipage}{0.3\linewidth}
		 			 \begin{compactenum}[(b)]
		 			 	\item $I=\{d_r\}\cup M$\\ with $1<r<\ell$,\\ and $M\subseteq[d_{r-1},d_r]\cap\Up_c$\\ and $M\neq \varnothing$
		 			 \end{compactenum}
		 			 \end{minipage}
				 	 $\qquad\qquad\qquad\qquad$
					 \begin{minipage}{0.3\linewidth}
					 \begin{compactenum}[(c)]
						\item $I=\{d_\ell\}\cup M$\\ with $M\subseteq[d_{\ell-1},d_\ell]\cap\Up_c$\\ and $M\setminus\{u_m\}\neq \varnothing$
					 \end{compactenum}
				 	 \end{minipage}
		}
	  \caption[]{Schematic illustrations the three cases of $\mathscr D_{I}=\{\delta_1,\delta_3,\delta_4\}$ (Lemma~\ref{lem:delta1_delta3_delta4}).}
      \label{fig:example_V}
      \end{minipage}
      \end{center}
\end{figure}
\begin{lem}[Compare Figure~\ref{fig:example_V}]\label{lem:delta1_delta3_delta4}$ $\\
	If $\mathscr D_I=\{ \delta_1,\delta_3,\delta_4\}$, then either
	\begin{compactenum}[(a)]
		\item $I=\{d_1,\ldots,d_{r-1}\}\cup \{u_1\}$ with $d_1<d_{r-1}<u_1<d_r$, or
			  \label{lem:delta1_delta3_delta4_a}
		\item $I=\{d_r\}\cup M$ with $1 < r < \ell$ and $\varnothing\neq M\subseteq[d_{r-1},d_r]\cap\Up_c$ or
	 		  \label{lem:delta1_delta3_delta4_b}
		\item $I=\{d_\ell\}\cup M$ with $M\subseteq [d_{\ell-1},d_\ell]\cap\Up_c$ and $M\setminus\{u_m\}\neq \varnothing$.
		 	  \label{lem:delta1_delta3_delta4_c}
	\end{compactenum}
\end{lem}

\begin{lem}\label{lem:delta2_delta3_delta4}
	If $\mathscr D_I=\{ \delta_2,\delta_3,\delta_4\}$, then $I=\{u_s,\ldots, u_t\}$ with $s+1<t$ and $(u_s,u_t)\cap \Do_c=\varnothing$. \end{lem}
\begin{proof}
	From $\delta_1\not\in\mathscr D_I$, we obtain $(a,b)_{\Do_c}=\varnothing$, in particular, $a=d_r$ and $b=d_{r+1}$ for some
	$1\leq r \leq \ell-1$. Thus $\gamma,\Gamma\in\Up_c$ and because of $\delta_4\in\mathscr D_I$ we have~$\gamma=u_s$ and~$\Gamma=u_t$ 
	for some $1\leq s<s+1<t\leq u_m$. But then $I=M$ for some $M\subseteq [u_s,u_t]_{\Up_c}$ with $u_s,u_t \in M$.
\end{proof}

\section*{Acknowledgments}
The author was partially supported by DFG Forschergruppe 565 \emph{Polyhedral Surfaces}. Extended abstracts of preliminary versions were presented at FPSAC~2011 and CCCG~2011. I thank the various anonymous referees for their helpful comments, in particular, one referee for \emph{Discrete and Computational Geometry}. 

\bibliographystyle{plain}
\bibliography{minkowski}
\label{sec:biblio}

\end{document}